\newtheorem{thm}{Theorem}
\newtheorem{ex}{Example}
\def\N{\mathcal{N}}
\def\Per{\mathcal{P}}
\newcommand{\dsum}{\displaystyle\sum}
\newcommand{\dmax}{\displaystyle\max}
\def\K{\mathcal{K}}
\def\R{\mathbb{R}}
\def\Z{\mathbb{Z}}
\def\H{\mathcal{H}}
\pgfplotsset{compat=newest}
\let\origmaketitle\maketitle
\def\maketitle{
	\begingroup
	\def\uppercasenonmath##1{} % this disables uppercasing title
	\let\MakeUppercase\relax % this disables uppercasing authors
	\origmaketitle
	\endgroup
}
\begin{document}

\title[]{\Large Exact Matrix Seriation through Mathematical Optimization:\\Stress and Effectiveness-Based Models}

\author[V. Blanco, A. Mar\'in, \MakeLowercase{and} J. Puerto]{
{\large V\'ictor Blanco$^{\dagger}$, Alfredo Mar\'in$^{\ddagger}$, and Justo Puerto$^{\star}$}\medskip\\
$^\dagger$Institute of Mathematics (IMAG), Universidad de Granada\\
$^\ddagger$Dpt. Stats \& OR, Universidad de Murcia\\
$^\star$Institute of Mathematics (IMUS), Universidad de Sevilla\\
\texttt{vblanco@ugr.es}, \texttt{amarin@um.es}, \texttt{puerto@us.es}
}

\maketitle

\begin{abstract}
Matrix seriation, the problem of permuting the rows and columns of a matrix to uncover latent structure, is a fundamental technique in data science, particularly in the visualization and analysis of relational data. Applications span clustering, anomaly detection, and beyond. In this work, we present a unified framework grounded in mathematical optimization to address matrix seriation from a rigorous, model-based perspective. Our approach leverages combinatorial and mixed-integer optimization to represent seriation objectives and constraints with high fidelity, bridging the gap between traditional heuristic methods and exact solution techniques.

We introduce new mathematical programming models for neighborhood-based stress criteria, including nonlinear formulations and their linearized counterparts. For structured settings such as Moore and von Neumann neighborhoods, we develop a novel Hamiltonian path-based reformulation that enables effective control over spatial arrangement and interpretability in the reordered matrix.

To assess the practical impact of our models, we carry out an extensive set of experiments on synthetic and real-world datasets, as well as on a newly curated benchmark based on a coauthorship network from the matrix seriation literature. Our results show that these optimization-based formulations not only enhance solution quality and interpretability but also provide a versatile foundation for extending matrix seriation to new domains in data science.
\end{abstract}
 
\keywords{
Matrix Seriation, Heatmaps, Integer Optimization, Hamiltonian Paths}

 \section*{Introduction}

 The primary goal of matrix seriation is to rearrange the rows and/or columns of a given matrix in order to reveal its possible hidden information. This technique is particularly useful to identify similar patterns between individuals in a dataset by means of its similarity, where adequately reordering the observations may result in detecting the groups of more similar observations,  visualizing heatmaps, or selecting the most correlated variables in a dataset~\citep{bertin2011graphics}. Its importance in data visualization has already been proven, with a public library available in R for this task, \href{https://cran.r-project.org/web/packages/seriation/index.html}{seriation}~\citep{hahsler2008getting}.

Matrix seriation was first introduced by \cite{petrie1899sequences} for archaeological purposes to order chronological assemblages of artifacts, which is particularly relevant when absolute dating techniques are unavailable, and these techniques allow one to construct the temporal sequence of cultural developments by analyzing patterns in artifact types and frequencies across different excavation sites. Nevertheless, this is not the only field where seriation methods are useful. For example, in biology/ecology, paleoecologists have applied matrix seriation to analyze the presences and absences of taxa in samples~\citep{brower1988seriation}, in anthropology~\citep{kuzara1966seriation}, psychology~\citep{brusco2001compact}, sociology~\citep{forsyth1946matrix} and chemistry~\citep{toth2018seriation}, among many others. Since its introduction, numerous methodological studies have been published to either propose metrics to measure the convenience of a rearranged matrix or propose solution methods for general or structured matrices \citep[see e.g.][]{borst2024connectivity,gendreau1994generalized,laporte1976comparison,laporte1978seriation,laporte1987solving}. More details on the history of seriation methods and applications can be found in the review papers~\citep{liiv2010seriation,hahsler2008getting}.

A matrix seriation problem has two key ingredients: the selection of the criterion to assess the goodness of a matrix and the methodology to reorder the rows and/or columns of the matrix under this criterion. Different measures have been proposed to this end. \cite{chen2002generalized} and \cite{hubert2001combinatorial} propose the so-called \emph{gradient} and \emph{divergence} criteria, that are based on comparing the resulting matrix with an \emph{ideal} ordered dissimilarity matrix (the Anti-Robinson matrix) where the values in all rows and columns only increase when moving away from the main diagonal. One can measure this closeness between a matrix and the Anti-Robinson matrix applying different seriation criteria: count the number of violations of the Anti-Robinson matrix conditions~\citep{chen2002generalized}, the difference between these violations and the agreements~\citep{hubert2001combinatorial}, or generalized versions that count only these (weighted) violations in bands of certain length~\citep{tien2008methods}. \cite{brusco2002integer} provides integer linear optimization models for these methods based on a previous formulation by \cite{decani1972branch}. Other family of measures are based on finding a trade-off between the values in the resulting matrix and their rank difference in the order, that is, on penalizing the existence of very different values close in the reordered matrix. The most popular criteria within this family are the \emph{least squares} measure and the \emph{inertia}, proposed by \cite{caraux2005permutmatrix}, which consider the squared difference between the transformed entry and the
absolute rank differences and the product of the entries by its squared rank difference, respectively. Other criteria of this family are the 2-sum~\citep{barnard1993spectral}, which considers the product of the inverse values by the squared rank differences, and the linear seriation criterion~\cite{hubert1976quadratic}, which considers the product of the values by their inverse absolute rank differences. The third family of criteria that have been used to measure the goodness of reordering a matrix are based on identifying the matrix with a (weighted) bipartite graph, where the nodes are rows and columns of the matrix, and the edges linking rows and columns are weighted by the corresponding entry in the matrix. \cite{hubert1976quadratic} and \cite{caraux2005permutmatrix} already noticed that sorting the rows/columns of the matrix is equivalent to finding a Hamiltonian path in the set of rows/columns with the adequate weights. Finally, \cite{niermann2005optimizing} proposed the \emph{stress} criterion to quantify the similarity of an entry by comparing it with its neighbor entries. This measure consists of computing the (squares or absolute) differences between each entry and the values of the neighbor entries, where different neighborhood shapes can be defined. They are defined for a given matrix, where one can easily compute its \emph{seriation goodness} value under any of the above criteria. Although most of them are defined for square matrices where rows and columns are sorted using the same permutation, some of the criteria can be adapted to the case in which columns and rows are separately reordered. In this paper, we first provide a general stress criterion that considers generalized neighborhoods and $\ell_p$-norm aggregations of the dissimilarities between an entry and their neighbors.

Once the seriation criterion is decided, the computation of the \emph{best} permutation of rows and/or columns of the matrix under such a measure is computationally challenging. As already stated in \cite{hahsler2008getting,hahsler2017experimental}, all proposed methodologies are either enumerative~\citep{hubert2001combinatorial,brusco2002integer} or heuristics, as the Bond Energy Algorithm (BEA)~\cite{mccormick1972problem}, hierarchical clustering, the rank-two ellipse seriation~\citep{chen2002generalized}, the spectral seriation~\citep{barnard1993spectral}, or the application of any of the vast amount of heuristic algorithms for the Traveling Salesman Problem (TSP) and the Quadratic Assignment Problem (QAP). Most of these measures and approximate approaches have been implemented and made publicly available through the \texttt{R} package \texttt{seriation}~\cite{hahsler2008getting}. Nevertheless, in most of the cases the methodologies are not aligned with the criteria, that is, the existing methods try to optimize a measure but the assessment of the transformed matrix is performed with a different metric, and also there is a clear lack of exact approaches for these matrix seriation problems, and especially for those based on stress measures. 

In this article we propose a mathematical optimization framework for a family of matrix seriation problems that, as far as we know, have only been partially studied: Matrix seriation problems under stress or effectiveness measures. Although the available software, as \texttt{seriation} in \texttt{R}, allows for the computation of some of the loss measures minimized in these methods, there is not a dedicated method available to obtain the optimal sorting with these criteria. We study the general problem of deciding how to rearrange the rows and columns of a given matrix $A\in \R^{n\times m}$ separately or, when $n=m$, jointly, by minimizing a stress measure or maximizing the measure of effectiveness. We derive different mathematical optimization formulations for the problem. First we propose a mixed integer nonlinear formulation that can be applied to any type of neighborhoods, then we derive two alternative linearizations of the model and strengthen this formulation using some symmetry breaking constraints. For particular neighborhood shapes, we develop another mixed integer linear model based on constructing two Hamiltonian paths, and using a compact flow-based formulation to avoid subtours. For more general neighborhood shapes which induce non separable stress measures, we adapt the Hamiltonian paths formulation at the price of including products of binary variables, which must be linearized. For maximizing the measure of effectiveness, we provide a different Hamiltonian path approach to cast this measure into the same combinatorial optimization problem. 
 
An important characteristic of our approaches is that they are exact, meaning they provide provably optimal solutions to the seriation problems. This exactness translates into significant advantages when evaluating structural measures, such as Moore or Von Neumann stress, or Meassure of Effectivenes, as the solutions truly optimize the corresponding objective functions. Consequently, the visualizations derived from our reordered matrices are more informative and interpretable, especially for datasets where subtle relational patterns are easily masked by heuristic approximations. Moreover, our formulations naturally allow for the incorporation of general ad-hoc requirements on the structure of the reordered matrices through additional constraints. For instance, specific rows or columns can be forced to remain adjacent or separated, or bounds can be imposed on their pairwise distances in the final layout. This flexibility, together with the exactness of our methods, offers a powerful and customizable tool for high-quality data visualization and analysis.

The use of integer optimization in Data Science is not new, and it has gained increasing attention in recent years due to its ability to model complex combinatorial structures with high precision and flexibility. For instance, recent Integer Optimization models have been proposed for selecting relevant features in clustering~\cite{benati2018mixed} and supervised classification methods~\cite{maldonado2011simultaneous,maldonado2014feature}, or in Community Detection in graphs~\cite{li2016quantitative,ponce2024mixed}. In data visualization, combinatorial optimization methods have been employed to draw graphs with fairness criteria~\cite{eades2025introducing}, or for visualizing dynamic multidimensional data~\cite{carrizosa2019visualization}. These contributions highlight the growing relevance of mathematical optimization as a principled framework for both the modeling and interpretation of data-driven tasks. Our work builds on this tradition by showing that exact seriation via integer optimization not only improves structural measures but also enhances the visual and analytical value of the data representations.

The rest of the paper is organized as follows. In Section \ref{sec:1} we introduce the notation and the matrix seriation problems that we analyze in this work. Section \ref{sec:2} presents the mathematical optimization models that we propose for the different seriation problems. First, we study general neighborhood seriation problems with a flexible but high dimensional mixed integer non linear optimization problem. Then, we propose two different types of linearizations for the problem. Finally, in that section we also develop novel Hamiltonian path-based formulation for the different seriation problems. Section \ref{sec:3} is devoted to report the results of a series of computational experiments. We first analyze the performance of our approaches on synthetic data sets, and then we analyze real-data sets available in the package \texttt{seriation} to compare our results with those obtained with other existing approaches. Finally, we provide a new benchmark dataset on the scientific coauthorship matrix on the topic \textit{matrix seriation}, and provide different insigths on the obtained results.

\section{Stress and Effectiveness Seriation Problems} \label{sec:1}

In this section we introduce the matrix seriation problems under analysis and set the notation for the rest of the sections.

We are given a real matrix $A \in \R^{n\times m}$. We denote by $N=\{1, \ldots, n\}$ and $M=\{1, \ldots, m\}$ the index sets for the rows and columns of $A$. Rearranging the rows and columns of $A$ is equivalent to determine two permutations $\sigma_r \in \Per_n, \sigma_c \in \Per_m$ (here $\Per_n$ and $\Per_m$ stand for the permutation sets of $N$ and $M$, respectively). After reordering $A$ with these permutations we obtain a transformed matrix that we denote by $A_{\sigma_r,\sigma_c}$. Note that this reordered matrix is the result of the matrix multiplication $A_{\sigma_r,\sigma_c} = P\cdot A \cdot Q$, where $P \in \{0,1\}^{n\times n}$ and $Q \in \{0,1\}^{m\times m}$ are the permutation matrices derived from $\sigma_r$ and $\sigma_c$, respectively.

The determination of the most appropriate permutations $\sigma_r$ and $\sigma_c$ depends on the measure used. One of the most common families of measures is the so-called stress measure family. In it, the goodness of the transformation is affected, for each entry of the matrix, by the closeness to its \emph{neighbor} entries. For each pair $(i,j) \in N\times M$, we denote by $\mathcal{N}_{ij}(A)$ the neighborhood of entry $a_{ij}$ in $A$. A parameterized family of neighborhoods that is useful in seriation problems is in the form:
$$
\mathcal{N}^\varepsilon_{ij}=\{(k,\ell ) \in N\times M: \|(k,\ell )-(i,j)\|_2\leq \varepsilon\}
$$
for some $\varepsilon \geq 0$.
In Figure \ref{fig:neigh} we show the neighborhoods of some entries of a matrix (represented as cells) for the cases $\varepsilon=1$ (the so-called von Neumann neighborhood), $\varepsilon=1.5$ (the Moore neighborhood) and $\varepsilon=2$.
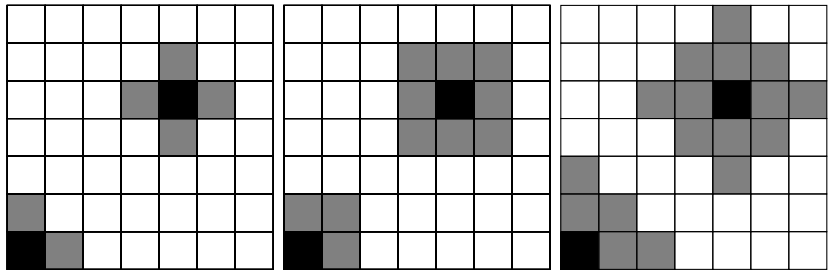
\begin{figure}[h]
\begin{center}
    \begin{tikzpicture}[scale=0.5]
        % Define grid size
        \def\n{7} % Grid size (5x5)

        % Define the highlighted cell (central)
        \def\hx{5} % X-coordinate of highlighted cell
        \def\hy{5} % Y-coordinate of highlighted cell
        
        % Highlight the main cell
        \fill[black] (\hx,\hy) rectangle (\hx+1,\hy+1);

                % Highlight surrounding cells (neighbors)
        \foreach \dx/\dy in {0/1, 1/0, 0/-1, -1/0} {
            \draw[gray, fill=gray] (\hx+\dx,\hy+\dy) rectangle (\hx+\dx+1,\hy+\dy+1);
        }
                % Define the highlighted cell (central)
        \def\hx{1} % X-coordinate of highlighted cell
        \def\hy{1} % Y-coordinate of highlighted cell
        
        % Highlight the main cell
        \fill[black] (\hx,\hy) rectangle (\hx+1,\hy+1);

        % Highlight surrounding cells (neighbors)
        \foreach \dx/\dy in {0/1, 1/0} {
            \fill[gray] (\hx+\dx,\hy+\dy) rectangle (\hx+\dx+1,\hy+\dy+1);
        }

        % Label the grid points
        \foreach \x in {1,...,\n} {
            \foreach \y in {1,...,\n} {
                \node at (\x+0.5,\y+0.5) {};%\small (\x,\y)};
            }
                % Draw the grid
        \foreach \x in {1,...,\n} {
            \foreach \y in {1,...,\n} {
                \draw[black] (\x,\y) rectangle (\x+1,\y+1);
            }
        }
        }
    \end{tikzpicture}~\begin{tikzpicture}[scale=0.5]
        % Define grid size
        \def\n{7} % Grid size (5x5)

        % Define the highlighted cell (central)
        \def\hx{5} % X-coordinate of highlighted cell
        \def\hy{5} % Y-coordinate of highlighted cell
        
        % Highlight the main cell
        \fill[black] (\hx,\hy) rectangle (\hx+1,\hy+1);

        % Highlight surrounding cells (neighbors)
        \foreach \dx/\dy in {0/1, 1/0, 0/-1, -1/0, 1/-1, 1/1, -1/1, -1/-1} {
            \fill[gray] (\hx+\dx,\hy+\dy) rectangle (\hx+\dx+1,\hy+\dy+1);
        }

        \def\hx{1} % X-coordinate of highlighted cell
        \def\hy{1} % Y-coordinate of highlighted cell
        
        % Highlight the main cell
        \fill[black] (\hx,\hy) rectangle (\hx+1,\hy+1);

        % Highlight surrounding cells (neighbors)
        \foreach \dx/\dy in {0/1, 1/0, 1/1} {
            \fill[gray] (\hx+\dx,\hy+\dy) rectangle (\hx+\dx+1,\hy+\dy+1);
        }

        % Label the grid points
        \foreach \x in {1,...,\n} {
            \foreach \y in {1,...,\n} {
                \node at (\x+0.5,\y+0.5) {};%\small (\x,\y)};
            }
                    % Draw the grid
        \foreach \x in {1,...,\n} {
            \foreach \y in {1,...,\n} {
                \draw[black] (\x,\y) rectangle (\x+1,\y+1);
            }
        }
        }
    \end{tikzpicture}~\begin{tikzpicture}[scale=0.5]
        % Define grid size
        \def\n{7} % Grid size (5x5)

        % Define the highlighted cell (central)
        \def\hx{5} % X-coordinate of highlighted cell
        \def\hy{5} % Y-coordinate of highlighted cell
        
        % Highlight the main cell
        \fill[black] (\hx,\hy) rectangle (\hx+1,\hy+1);

        % Highlight surrounding cells (neighbors)
        \foreach \dx/\dy in {0/1, 1/0, 0/-1, -1/0, 1/-1, 1/1, -1/1, -1/-1, 0/2, 2/0, -2/0, 0/-2}  {
            \fill[gray] (\hx+\dx,\hy+\dy) rectangle (\hx+\dx+1,\hy+\dy+1);
        }

        \def\hx{1} % X-coordinate of highlighted cell
        \def\hy{1} % Y-coordinate of highlighted cell
        
        % Highlight the main cell
        \fill[black] (\hx,\hy) rectangle (\hx+1,\hy+1);

        % Highlight surrounding cells (neighbors)
        \foreach \dx/\dy in {0/1, 1/0, 1/1, 2/0, 0/2} {
            \fill[gray] (\hx+\dx,\hy+\dy) rectangle (\hx+\dx+1,\hy+\dy+1);
        }

        % Label the grid points
        \foreach \x in {1,...,\n} {
            \foreach \y in {1,...,\n} {
                \node at (\x+0.5,\y+0.5) {};%\small (\x,\y)};
            }
        }
                % Draw the grid
        \foreach \x in {1,...,\n} {
            \foreach \y in {1,...,\n} {
                \draw[black] (\x,\y) rectangle (\x+1,\y+1);
            }
        }
    \end{tikzpicture}
\end{center}
\caption{Different neighborhoods in the family $\mathcal{N}^\varepsilon$ (from left to right: $\varepsilon=1$, $\varepsilon=1.5$ and $\varepsilon=2$). The entries $(i,j)$ are highlighted in black whereas the neighbors are colored in gray\label{fig:neigh}}
\end{figure}

The goal of a stress seriation problem is to find the permutations $\sigma_r$, $\sigma_c$ minimizing the differences between the entries and their neighbors in the transformed matrix. These differences are usually computed as the $\ell_1$ or $\ell_2$ norms, i.e., for an entry $(i,j)$ in the original matrix, and permutations $\sigma_r$, $\sigma_c$, its stress measure in the transformed matrix is:
\begin{equation}\label{eq:stress:lp}
S^{\ell_p}_{ij}(\sigma_r,\sigma_c) := 
\sum_{\stackrel{(k,\ell ):}{(\sigma_r(k),\sigma_c(\ell )) \in 
\mathcal{N}_{\sigma_r(i)\sigma_r(j)}}} 
|a_{\sigma_r(i)\sigma_c(j)} - a_{\sigma_r(k)\sigma_c(\ell )}|^p.
\end{equation}
That is to say, the transformed entry $a_{\sigma_r(i)\sigma_c(j)}$ is compared against the neighbor entries in the transformed matrix.

Note that each entry itself is assumed to be in its neighborhood, its contribution to the measure is null, and one could remove it from its own neighborhood.

With the above measure, the stress matrix seriation problem consists of finding the permutations minimizing the overall sum of the stress measures for each entry:
\begin{equation}\label{eq:stressmin}
\min_{ \sigma_r \in \mathcal{P}_n\atop\sigma_c \in \mathcal{P}_m} 
\sum_{i\in N} \sum_{j \in M} S^{\ell_p}_{ij}(\sigma_r,\sigma_c).
\end{equation}

In some situations, where the rows and columns of a square matrix $A \in \R^{n\times n}$ represent the same set of individuals, the permutations $\sigma_r$ and $\sigma_c$ must coincide (as in similarity/dissimilarity matrices). In that case, the problem above simplifies to finding a single permutation $\sigma$ in the problem above.

% \begin{equation}\label{eq:stressmin_sym}
% \min_{ \sigma \in \mathcal{P}_n} \sum_{i\in N} \sum_{j \in N} S^{\ell_p}_{ij}(\sigma,\sigma).
% \end{equation}
This problem was proposed by  \cite{niermann2005optimizing} for the $\ell_2$ stress measure and both the von Neumann and the Moore neighborhoods, together with an evolutionary algorithm to compute a reasonable solution for the problem. The available software, called \texttt{seriation} in \texttt{R}~\citep{hahsler2008getting}, although allows for the computation of the loss function induced by these measures (for the three types of neighborhoods described above), does not allow for the computation of the optimal sorting with any of these criteria.

In order to measure the conciseness of a matrix, one can use the following modified homogeneity operator based on the stress measure:
$$
\H_p(A) = \frac{1}{nm} \sum_{k\in N}\sum_{\ell  \in M}  
\frac{1}{|\mathcal{N}_{k\ell }|}\sum_{(k',\ell ')\in \mathcal{N}_{k\ell }} 
\Big| a_{k\ell } - a_{k'\ell '}\Big|^p.
$$
This operation is designed to evaluate how well structured a matrix seriation is by considering both the differences between neighboring values and the size of the local neighborhood for each matrix entry. The larger the value of $\H_p(A)$, the more homogeneous is the matrix. For normalized matrices (matrix entries between $0$ and $1$), the above measure is an index in $[0,1]$, being $0$ achieved when all the elements in the matrix are equal. The upper bound of $1$ is reached just for particular cases. For the von Neumann neighborhood and binary matrices, $\H_p(A)=1$ only holds when the matrix alternates in columns and rows the $0$s and $1$s (the chessboard matrix). 

Yet another related seriation problem is to maximize the Measure of Effectiveness (ME) as defined by \cite{mccormick1972problem}. Its definition is
\begin{equation}
    {\rm ME}(A)=\frac{1}{2} \sum_{k \in N} \sum_{\ell \in M} a_{k\ell}(a_{k,\ell+1}+
    a_{k,\ell-1}) + \sum_{k \in N} \sum_{\ell \in M} a_{k\ell}(a_{k+1,\ell}+a_{k-1,\ell}) \label{eq:Mcornick}.
\end{equation}
ME is maximized if each element is as closely related numerically to its four neighbors as possible and gets maximal only if all large values are grouped together around the main diagonal. We will show that ME can be efficiently computed using similar approaches to those we introduce for the stress seriation.

\begin{ex}
    In Figure \ref{fig:matrices} we plot a heatmap of a $20\times 20$ matrix with entries in $[0,1]$ with $1$s coloured in black in the plots. In the left plot we show the original matrix, while in the right plot the rows and columns are sorted, in that case optimizing the Measure of Effectiveness \eqref{eq:Mcornick}. In that case, both the rows and the columns of the matrix are separately sorted (a different order is allowed for columns and rows), revealing a four cluster structure in the data, that would be difficult to identify in the original matrix.

    \begin{figure}
    \centering\includegraphics[width=0.9\textwidth]{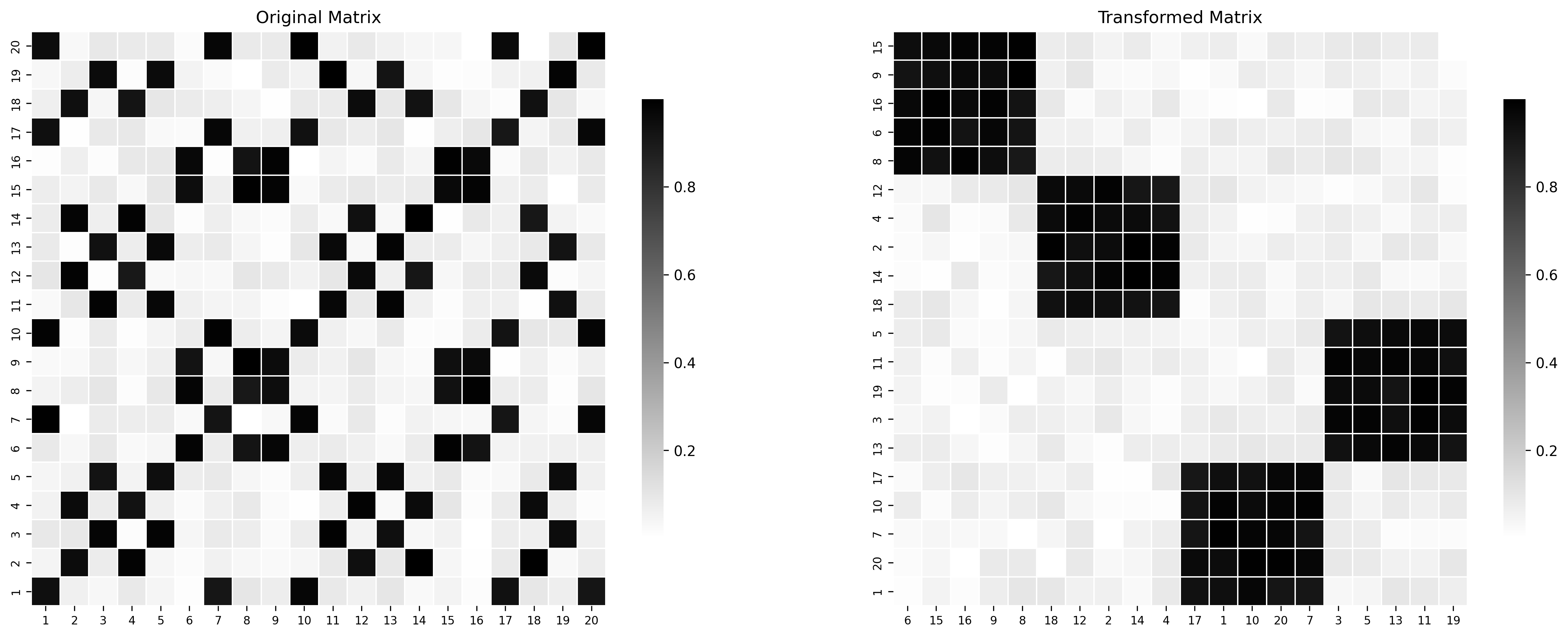}
    \caption{Graphical visualization of the impact of applying a matrix seriation method. In the left plot we show the heatmap of a $[0,1]$-matrix, while in the right plot the sorted heatmap is shown, detecting the hidden clusters in the original matrix\label{fig:matrices}}
    \end{figure}
\end{ex}

\section{Position Assignment Models for Stress and Effectiveness Seriation}  \label{sec:2}

In this section we develop a unified mathematical optimization model to compute optimal re-arrangements of the rows and columns of a given matrix by minimizing  stress criteria or maximizing the measure of effectiveness. The formulation that we present belongs to the family of mixed integer non-linear programming problems. Next, we develop two equivalent models that allow to linearize the nonconvex terms that appear in the model. We call these models the \emph{Position Assignment Models} (PAM) since they are based on adequately modeling through binary variables the assignment of each of the rows and columns in the matrix to a final position in the sorting.

We are given a matrix $A =(a_{ij}) \in \R^{n\times m}$. First we assume that a stress seriation measure has to be optimized. Then, we are given a general neighbor, $\N_{ij}$, for each entry of the matrix $(i,j) \in N \times M$. As mentioned above, the final configuration (re-arrangement) of the matrix will be constructed by comparing each of the entries with all its neighbors.  

We consider the following decision variables that allow determine the position of the rows and columns of the original matrix in the re-arranged matrix. 
$$
x_{ik} = \begin{cases}
    1 & \mbox{if row $i$ is sorted in position $k$}\\
    0 & \mbox{otherwise}
\end{cases} \; y_{j\ell} = \begin{cases}
    1 & \mbox{if column $j$ is sorted in position $\ell$}\\
    0 & \mbox{otherwise}
\end{cases},
$$
for all $i,k\in N$, $j,\ell \in M$.

Thus the induced row and columns permutations are
$$
\sigma_{r}(i) = k \text{ if $x_{ik}=1$}, \quad 
\sigma_{c}(j) = \ell  \text{ if $y_{j\ell }=1$}.
$$

With these variables, the problem can be formulated as follows:
\begin{align} \hbox{(PAM) }
    \min & \sum_{k\in N}\sum_{\ell \in M} \sum_{(k',\ell ') \in \mathcal{N}_{k\ell}} 
    \Big|\sum_{i\in N}\sum_{j\in M} a_{ij}x_{ik} y_{j\ell} -
         \sum_{i\in N}\sum_{j\in M} a_{ij}x_{ik'}y_{j\ell '} \Big|^p \label{f1:obj}\\
    \mbox{s.t. } 
    & \sum_{i \in N} x_{ik} = 1\ \ \ \ \forall k \in N\label{f1:ctr1}\\
    & \sum_{k \in N} x_{ik} = 1 \ \ \ \ \forall i \in N\label{f1:ctr2}\\
    &\sum_{j \in M} y_{j\ell} = 1 \ \ \ \ \forall l \in M\label{f1:ctr3}\\
    & \sum_{\ell\in N} y_{i\ell} = 1\ \  \ \ \forall j \in M\label{f1:ctr4}\\
    & x_{ik}, y_{j\ell} \in \{0,1\}\ \ \ \ \forall i,k \in N\ j,\ell \in M.\label{f1:ctr5}
\end{align}
In the above formulation, the objective function \eqref{f1:obj} represents the overall sum of the stress measures on all the entries of the transformed matrix. Constraints \eqref{f1:ctr1} guarantee that all row positions are assigned to a row and constraints \eqref{f1:ctr2} that all rows are sorted in a position. Constraints \eqref{f1:ctr3} and \eqref{f1:ctr4} are the same conditions for columns. 

The one above is a nonlinear binary formulation that can be reformulated as a mixed integer second order cone optimization problem by linearizing the products of the binary variables in the objective function and representing the $\ell_p$ norm as second order cone constraints. 

Observe that the norm-based constraints can be rewritten as follows:
\begin{align}
&v_{k\ell k'\ell '}  \ge 
\sum_{i\in N}\sum_{j\in M} a_{ij} x_{ik} y_{j\ell } -
\sum_{i\in N}\sum_{j\in M} a_{ij}x_{ik'} y_{j\ell'} & \forall k \in N, \ell  \in M, (k',\ell ')\in \N_{k\ell }\label{ctr:NL1}\\
&v_{k\ell k'\ell '}  \ge 
-\sum_{i\in N}\sum_{j\in M} a_{ij} x_{ik} y_{j\ell } +
 \sum_{i\in N}\sum_{j\in M} a_{ij}x_{ik'} y_{j\ell'} & \forall k \in N, \ell \in M, (k',\ell ')\in \N_{k\ell }\label{ctr:NL2}\\
&w_{k\ell k'\ell '}  \ge v_{k\ell k'\ell '}^p & \forall k \in N, \ell \in M, (k',\ell ')\in \N_{k\ell } \label{f1:p}\\
&\theta_{k\ell }  \ge \sum_{(k',\ell ')\in \N_{k\ell }} w_{k\ell k'\ell '} &
\forall k \in N, \ell \in M.
\end{align}
Thus, with this rewriting, the objective function is $
\dsum_{k\in N} \dsum_{\ell \in M} \theta_{k\ell}
$, 
and constraints \eqref{f1:p} can be efficiently expressed as a set of second order cone constraints \citep[see e.g.][]{blanco2024minimal,blanco2014revisiting}. 

The rest of the non-linear constraints, namely \eqref{ctr:NL1} and \eqref{ctr:NL2} can be \emph{linearized} using different techniques. In what follows, we provide two alternative linearization for these constraints.

Without loss of generality, we assume that the matrix $A \in \R^{n\times m}$ has all its entries in $[0,1]$. Otherwise, one can transform the matrix $A$ to the normalized matrix $\tilde{A}$ as:
$$
\tilde{a}_{ij} = \frac{a_{ij} - a_{\rm min}}{a_{\rm max}}
$$
where $a_{\rm min} = \min_{i'\in N, j'\in M} \{a_{i'j'}\}$ and $a_{\rm max} = \max_{i'\in N, j'\in M} \{a_{i'j'}\}$. Thus, the objective function for $\tilde{A}$ reads:
$$
 \frac{1}{|a_{\rm max}|^p}\sum_{k\in N}\sum_{\ell \in M} \sum_{(k',\ell ') \in \mathcal{N}_{k\ell}} 
    \Big|\sum_{i\in N}\sum_{j\in M} a_{ij}x_{ik} y_{j\ell} -
         \sum_{i\in N}\sum_{j\in M} a_{ij}x_{ik'}y_{j\ell '} \Big|^p.
$$
Thus, solving the problem for $\tilde{A}$ with an obtained stress measure of $\tilde{\rho}$ results in an equivalent optimal stress seriation for $A$ with stress measure $\rho = |a_{\rm max}|^p \; \tilde{\rho}$.

\subsection{Position Assignment Model: Linearization 1}

% \begin{align}
% &v_{k\ell k'\ell '}  \ge 
% \sum_{i\in N}\sum_{j\in M} a_{ij} z_{ijk\ell } -
% \sum_{i\in N}\sum_{j\in M} a_{ij}z_{ijk'\ell '} & \forall k \in N, \ell  \in M, (k',\ell ')\in \N_{k\ell }\\
% &v_{k\ell k'\ell '}  \ge 
% -\sum_{i\in N}\sum_{j\in M} a_{ij} z_{ijk\ell } +
%  \sum_{i\in N}\sum_{j\in M} a_{ij}z_{ijk'\ell '} & \forall k \in N, \ell \in M, (k',\ell ')\in \N_{k\ell }\\
% &w_{k\ell k'\ell '}  \ge v_{k\ell k'\ell '}^p & \forall k \in N, \ell \in M, (k',\ell ')\in \N_{k\ell } \label{f1:p}\\
% &\theta_{k\ell }  \ge \sum_{(k',\ell ')\in \N_{k\ell }} w_{k\ell k'\ell '} &
% \forall k \in N, \ell \in M.
% \end{align}

For the linearization of the product of binary variables, we proceed, as usual, by introducing auxiliary variables that represent the product of $x$- and $y$-variables:
$$
z_{ijk\ell } = x_{ik} y_{j\ell} = \begin{cases}
1 & \mbox{ if $x_{ik}=1$ and $y_{j\ell }=1$}\\
0 & \mbox{otherwise}
\end{cases} \quad \forall i, k \in N, j, \ell  \in M.
$$
Thus, all the products $x_{ik} y_{j\ell }$ in the objective function can be replaced by the variable $z_{ijk\ell }$ after adding the following constraints to the problem for all $i, k \in N$, $j, \ell  \in M$:
\begin{align*}
    z_{ijk\ell } &\geq x_{ik}+y_{j\ell }-1\\
    z_{ijk\ell } &\leq x_{ik} \\
    z_{ijk\ell } &\leq y_{j\ell }\\
    z_{ijk\ell } &\ge 0
\end{align*}
Note that the above formulation has $O(n^2 + m^2)$ binary variables, $O(n^2 m^2)$ continuous variables, $O(n^2 m^2)$ linear constraints, and $O(nm)$ nonlinear constraints. For the $\ell_1$ norm ($p=1$), the model simplifies to only linear constraints. Furthermore, the special case of square matrices where, additionally, the rows and columns are enforced to perform the same transformation, further simplifies to a single set of permutation variables, since $x=y$.

As expected, the seriation problem has multiple optimal solutions that may slow down the solution procedure. Some of these optimal solutions can be avoided by imposing the following constraint to break symmetries:
$$
\sum_{k\in N} 2^k x_{1k} \leq \sum_{k\in N} 2^k x_{2k},
$$
enforcing the first row to be permuted to a position smaller than the second row.

\subsection{Position Assignment Model: Linearization 2}

In the second linearization that we develop for the bilinear terms in formulation (PAM), we consider the following continuous (nonnegative) variables for every 
$k\in N$ and $\ell \in M$:
$$
s_{k\ell}: \text{ entry of the transformed matrix in position $(k,\ell)$.}
$$

Note that using the variables $x$ and $y$ that determine the permutations for the rows and columns, we can adequately define the $s$-variables as follows:
\begin{align*}
& s_{k\ell } \geq x_{ik} + \sum_{j\in M} a_{ij} y_{j\ell } -1
    \ \ \ \ \forall i, k \in N, \ell  \in M\\
& s_{k\ell }\le 1 - x_{ik} + \sum_{j \in M} a_{ij} y_{j\ell }\ \ \ \ \forall i,k \in N, \ell  \in M.
\end{align*}
The first set of constraints says that in case row $i$ is arranged at position $k$, then, for the index $j$ that is assigned to position $\ell $ (which is exactly one by the assignment constraints), $s_{k\ell } \geq 1 + a_{ij} -1 = a_{ij}$. Otherwise, the constraint is redundant since we assume that all the entries of $A$ are smaller than $1$. The second set of constraints, similarly, force $s_{k\ell } \leq a_{ij}$. Thus, if row $i$ is assigned to position $k$ and column $j$ is assigned to position $\ell $, $s_{k\ell }$ takes value $a_{ij}$, as desired. Otherwise, if $i$ is not assigned to position $k$, the first set of constraints require $s_{k\ell } \geq 0 + a_{ij} - 1 (\leq 0)$, so $s_{k\ell }$ is not restricted. By the second set of constraints, in this situation $s_{k\ell } \leq 1 + a_{ij} (\geq 1)$, and then $s_{k\ell }$ is not restricted. Since each row and column are enforced to be assigned to exactly one position, one of these constraints will be activated, enforcing the $s$-variables to take the appropriate entry values.

Then, one can reformulate the stress seriation problem as:
\begin{align}
\min & \; \sum_{k\in N} \sum_{\ell \in M}  \sum_{(k',\ell ') \in \mathcal{N}_{k\ell}} \Big|s_{k\ell } - s_{k'\ell '}\Big|^p\label{f1a:obj}\\
\nonumber \mbox{s.t. } & \eqref{f1:ctr1}, \eqref{f1:ctr2}, \eqref{f1:ctr3}, \eqref{f1:ctr4}, \eqref{f1:ctr5} \\
& s_{k\ell } \geq x_{ik} + \sum_{j\in M} a_{ij} y_{j\ell } -1 & \forall i, k \in N, \ell  \in M \label{f1a:ctr5}\\
& s_{k\ell }\le 1 - x_{ik} + \sum_{j \in M} a_{ij} y_{j\ell } &   \forall i,k \in N, \ell  \in M. \label{f1a:ctr6}
\end{align}

Again, the problem above can be viewed as a mixed integer $p$-order cone optimization problem, by introducing auxiliary variables $\nu_{k\ell k'\ell '}$ to represent the expressions $|s_{k\ell }-s_{k'\ell '}|$, resulting in the model:
\begin{align*}
\min & \sum_{k,\ell\in N} \rho_{k\ell } \\
\nonumber \mbox{s.t. } & \eqref{f1:ctr1}, \eqref{f1:ctr2}, \eqref{f1:ctr3}, \eqref{f1:ctr4}, \eqref{f1:ctr5},
\eqref{f1a:ctr5}, \eqref{f1a:ctr6}\\
 %   & \sum_{k \in N} x_{ik}=1 & \forall i\in N\\
 %   & \sum_{i \in N} x_{ik}=1& \forall k\in N\\
 %   & \sum_{\ell  \in M} y_{j\ell }=1 & \forall j\in M\\
 %   & \sum_{j \in M} y_{j\ell }=1 & \forall \ell \in M\\
 %   & s_{k\ell } \geq x_{ik} + \sum_{j\in M} a_{ij} y_{j\ell } -1 & \forall i, k \in N, \ell  \in M\\
 %   & s_{k\ell }\le 1 - x_{ik} + \sum_{j \in M} a_{ij} y_{j\ell } &   \forall i,k \in N, \ell  \in M\\
    & \nu_{k\ell k'\ell '} \geq s_{k\ell }- s_{k'\ell '} &
    \forall k \in N, \ell  \in M, (k',\ell ') \in \mathcal{N}_{k\ell }\\
    & \nu_{k\ell k'\ell '} \geq s_{k'\ell '}-s_{k\ell } &
    \forall k \in N, \ell  \in M, (k',\ell ') \in \mathcal{N}_{k\ell }\\
    & \rho_{k\ell } \geq \sum_{(k'\ell ')\in \N_{k\ell }} \nu_{k\ell k'\ell '}^p & \forall k \in N, \ell \in M.\\
  %  & x_{ik} \in \{0,1\} & \forall i, k \in N\\
  %  & y_{j\ell } \in \{0,1\} & \forall j,\ell  \in M\\
  %  & s_{k\ell } \geq 0 & \forall k \in N, \ell \in M.
\end{align*}

Additionally, we consider the following set of valid inequalities to strengthen the model:
\begin{align*}
    & \sum_{\ell\in M} s_{k\ell } = \sum_{i\in N}\sum_{j\in M} a_{ij} x_{ik} & \forall k \in N\\
     & \sum_{k \in N} s_{k\ell } = \sum_{i\in N}\sum_{j\in M} a_{ij} y_{j\ell } & \forall \ell  \in M\\
    & s_{k\ell } \geq y_{j\ell } + \sum_{i\in N} a_{ij}x_{ik} -1 & 
    \forall j, \ell  \in M, k \in N \\
    & s_{k\ell }\le 1 - y_{j\ell } + \sum_{i \in N} a_{ij} x_{ik} & 
    \forall j,\ell  \in M, k \in N.
\end{align*}

\section{Hamiltonian Path Models}  \label{sec:3}

In this section we present different families of models based on the representation of the sequence of sorted rows and columns as Hamiltonian paths of networks induced by the matrix and the stress efficiency measures considered in this paper.

Through this section, we consider the two networks $G_R(A)$ and $G_C(A)$ defined respectively by the complete undirected graphs $\K_n$ and $\K_m$. For the sake of simplicity, we will obviate the mention of the matrix $A$ when denoting the graphs, unless necessary. The optimal sorted sequences of rows and columns under the different criteria will be obtained by calculating an optimal weighted Hamiltonian path on those networks, whose weights are induced by the measure to be optimized. Recall that a Hamiltonian path  visits every vertex exactly once. Thus, the identification between a seriation problem and Hamiltonian path is natural (every Hamiltonian path in $\K_n$ is a sequence of $n$ elements in 
$\{1,\ldots, n\}$ and, then, equivalent to a permutation $\sigma_s \in \Per_s$), as already observed in  \cite{laporte1978seriation,brusco2001compact}.

By the above identification, the matrix seriation problems allow for a Hamiltonian path reformulation of the problem.
To obtain the optimal sorting of the rows and columns, a minimum/maximum weight Hamiltonian path on the corresponding network
has to be found, i.e, a TSP has to be solved. This is an NP-complete problem~\citep{garey1979computers}. 
Several formulations have been proposed for it, which mainly differ in the way the subtours are eliminated, 
either adding exponentially many constraints, as DFJ~\citep{dantzig1954solution}, or adding a reduced number of 
auxiliary variables, as MTZ~\citep{miller1960integer} and GG~\citep{gavish1978travelling}. 
In our experiments we use the formulation proposed in \cite{gavish1978travelling} since it resulted in a good performance in our preliminary tests compared to other alternatives.

Although in the following sections we will provide the weights that are required to optimize particular criteria, 
we give in the following the mathematical optimization models to compute the two Hamiltonian paths on the networks 
$(G_R,\omega)$ and $(G_C,\tau)$.

In this formulation we use the following sets of variables:
$$
z^R_{ik} = \begin{cases}
    1 & \mbox{if row $i$ precedes row $k$} \\
    0 & \mbox{otherwise}
\end{cases}\ \ 
~z^C_{j\ell} = \begin{cases}
    1 & \mbox{if column $j$ precedes column $\ell$} \\
    0 & \mbox{otherwise}
\end{cases},
$$
for all $i, k \in N (i\neq k)$ and  $j, \ell \in M (j\neq \ell)$,
$$
t^R_i = \begin{cases}
    1 & \mbox{if row $i$ is sorted in last position } \\
    0 & \mbox{otherwise}
\end{cases}\ 
t^C_j = \begin{cases}
    1 & \mbox{if column $j$ is sorted in last position} \\
    0 & \mbox{otherwise}
\end{cases},
$$
for all $i \in N$, $j \in M$, and
$$
g^R_{ik} = \mbox{ position of row arc $(i,k)$ in the sequence},\; g^C_{j\ell} = \mbox{ position of column arc $(j,\ell)$ in the sequence}\ 
$$
for all $i, k \in N (i\neq k)$ and  $j, \ell \in M (j\neq \ell)$.

With these variables, the optimal stress seriation problem for sorting elements of 
$R$ (rows or columns of $A$) can be obtained by solving the following mixed integer linear optimization problem:
\begin{align}
 \hbox{(HPM)}\ \min / \max & \sum_{i,k\in N\atop i\neq k} \omega^R_{ik} z^R_{ik} + \sum_{j, \ell\in M\atop j\neq \ell} \omega^C_{j\ell} z^C_{j\ell}\label{tsp:obj}\\
    \mbox{s.t. } & \sum_{k\in N\atop i\neq k} z^R_{ik} + t^R_i = 1 & \forall i\in N \label{tsp:1a}\\
    & \sum_{k\in M\atop j\neq \ell} z^C_{j\ell} + t^C_j = 1 & \forall j\in N \label{tsp:1b}\\
    & \sum_{i \in N} t^R_i = 1 \label{tsp:2a} \\
    & \sum_{j \in M} t^C_j = 1 \label{tsp:2b} \\
    & \sum_{i\in N\atop i\neq k} z^R_{ik} \le 1 & \forall k\in N, \label{tsp:3a}\\
    & \sum_{j\in M\atop j\neq \ell} z^C_{j\ell} \le 1 & \forall \ell\in M, \label{tsp:3b}\\
    & \sum_{k\in N} g^R_{ik} \ge \sum_{k\in N} g^R_{ik}-n t^R_i+1 & \forall i\in N,\label{tsp:4a}\\
    & \sum_{\ell\in M} g^C_{j\ell} \ge \sum_{\ell\in M} g^C_{j\ell}- m t^C_j+1 & \forall j\in M,\label{tsp:4b}\\
    & g^R_{ik}\le (n-1)z^R_{ik} & \forall i, k \in N (i\neq k) \label{tsp:5a}\\
    & g^C_{j\ell}\le (m-1)z^C_{j\ell} & \forall j, \ell \in M (j\neq \ell) \label{tsp:5b}\\
    & g^R_{ik} \ge z^R_{ik} & \forall i, k\in N (i\neq k) \label{tsp:6a} \\
        & g^C_{j\ell} \ge z^C_{j\ell} & \forall j, \ell \in M (j\neq \ell) \label{tsp:6b} \\
    & z^R_{ik} \in \{0,1\} & \forall i, k \in N (i \neq k) \label{tsp:7a} \\
    & z^C_{j\ell} \in \{0,1\} & \forall j, \ell \in M (j \neq \ell) \label{tsp:7b} \\
    & t^R_i \in \{0,1\} & \forall i\in N,
    \label{tsp:8a}\\
     & t^C_j \in \{0,1\} & \forall j\in M,
    \label{tsp:8b},\\
    & g^R_{ik} \in \Z_+ & \forall i, k\in N (i\neq k),\label{tsp:9a}\\
    & g^C_{j\ell} \in \Z_+ & \forall j, \ell\in M (j\neq \ell).\label{tsp:9b}
\end{align}

The objective function \eqref{tsp:obj} considers the sum of the weights of the two paths.  Then, we assure the adequate construction of the Hamiltonian path, both for rows (constraints \eqref{tsp:1a}, \eqref{tsp:2a}, \eqref{tsp:3a}, \eqref{tsp:4a}, \eqref{tsp:5a}, \eqref{tsp:6a}, \eqref{tsp:7a}, \eqref{tsp:8a}, and \eqref{tsp:9a}) and for the columns (constraints \eqref{tsp:1b}, \eqref{tsp:2b}, \eqref{tsp:3b}, \eqref{tsp:4b}, \eqref{tsp:5b}, \eqref{tsp:6b}, \eqref{tsp:7b}, \eqref{tsp:8b}, and \eqref{tsp:9b}). 
Constraints \eqref{tsp:1a}  and \eqref{tsp:1b} assure that every element (row/column) has a sucessor or it is the last in the sequence. 
Constraints \eqref{tsp:2a} and \eqref{tsp:2b} stand for a single last element. 
Constraints \eqref{tsp:3a} and \eqref{tsp:3b} state that every element is preceded by at most one other. These, together with the previous constraints, assure that each of the elements except the first one is preceded by one other element. 
The existence of subtours in the solutions is avoided by constraints \eqref{tsp:4a}-\eqref{tsp:6b}. Constraints \eqref{tsp:4a} and \eqref{tsp:4b} force that, unless the element is the last in the sequence, the position of the ingoing arc to this element is strictly previous to the position of the outgoing arc from it. Constraints \eqref{tsp:5a}-\eqref{tsp:6b} impose that the value of the $g$-variables is positive if and only if the arc is in the path, and in that case it is in the interval $[1,n-1]$ (for rows) or $[1,m-1]$ (for columns). Since $|R|-1$ arcs are to be allocated, each of them will be allocated to an integer in this interval.

The first observation is that the problem above can be exactly decomposed as two independent Hamiltonian paths. Second, in case $n=m$ and the same permutation is to be applied to rows and columns, a single minimum weight Hamiltonian path is required to be computed, but here the weights for the arcs  are replaced by the sum of the weights. Thus, the above problem remains, but the objective function to be optimized is:
$$
\sum_{i,k \in N\atop i\neq k} (\omega^R_{ik} + \omega^C_{ik}) z_{ik}
$$
(note that superscript, $C$ or $R$, in the variables is no longer required since both coincide when the permutations to be obtained are the same).

In what follows we detail how to derive those weights for stress measures under the von Neumann and Moore neighborhoods as well as for the Measure of Efficiency.

\subsection{von Neumann Neighborhoods}

One of the most frequently used neighborhood shapes is the one induced by the so-called von Neumann neighbors since is one of the 
simplest. With our notation it coincides with the sets $\N_{ij}^1$, that is, when $\varepsilon=1$ (left plot in Figure \ref{fig:neigh}). 

In this case, we endow the graphs $G_R$ and $G_C$ with the edge weights $\omega^{VN}$ and $\tau^{VN}$ defined as:
$$
\omega^{VN}_{ik}:= 2\sum_{j\in M} |a_{ij} - a_{kj}|^p, \; \forall i, k \in N\ (i\neq k),
$$ 
$$
\tau^{VN}_{j\ell} := 2\sum_{i\in N} |a_{ij} - a_{i\ell}|^p, \; \forall j, 
\ell \in M\ (j\neq \ell).
$$
\begin{thm}\label{thm:1}
    The stress seriation problem for the matrix $A$ under the von Neumann neighborhood is equivalent to find two minimum weight Hamiltonian paths in the networks $(G_R,\omega^{VN})$ and $(G_C,\tau^{VN})$, respectively.
\end{thm}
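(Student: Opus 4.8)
The plan is to prove the equivalence by showing that, specifically for the von Neumann neighborhood, the total stress of a transformed matrix \emph{separates} into one term depending only on the row permutation and one depending only on the column permutation, and that each of these is precisely the weight of a Hamiltonian path in the corresponding network; the separability is exactly the feature that fails for larger neighborhoods and is what makes the decomposition possible here.

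First I would rewrite the objective in \eqref{eq:stressmin} directly in terms of the transformed matrix $B=A_{\sigma_r,\sigma_c}$, whose entry in position $(k,\ell)$ is $a_{\sigma_r^{-1}(k)\,\sigma_c^{-1}(\ell)}$. Discarding the null self-term, $\mathcal{N}^1_{k\ell}$ consists of the at most two vertical neighbors $(k\pm1,\ell)$ and the at most two horizontal neighbors $(k,\ell\pm1)$ lying inside the grid, so
\[
\sum_{k\in N}\sum_{\ell\in M} S^{\ell_p}_{k\ell}(\sigma_r,\sigma_c)
=\sum_{k\in N}\sum_{\ell\in M}\Big(\sum_{|k'-k|=1}|b_{k\ell}-b_{k'\ell}|^p+\sum_{|\ell'-\ell|=1}|b_{k\ell}-b_{k\ell'}|^p\Big).
\]
The key step is a double-counting argument: every vertically adjacent pair $\{(k,\ell),(k+1,\ell)\}$ is counted exactly twice in this sum (once centered at $(k,\ell)$, once at $(k+1,\ell)$), and likewise every horizontally adjacent pair; boundary cells contribute nothing extra since the missing neighbor is simply absent. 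Hence the expression equals
\[
2\sum_{k=1}^{n-1}\sum_{\ell\in M}|b_{k\ell}-b_{k+1,\ell}|^p+2\sum_{k\in N}\sum_{\ell=1}^{m-1}|b_{k\ell}-b_{k,\ell+1}|^p .
\]

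Next I would reindex. Fix consecutive row positions $k,k+1$ and set $i=\sigma_r^{-1}(k)$, $i'=\sigma_r^{-1}(k+1)$; since $\ell\mapsto\sigma_c^{-1}(\ell)$ runs over all of $M$ (here we use that $\sigma_c$ is a bijection), the inner sum over $\ell$ equals $\sum_{j\in M}|a_{ij}-a_{i'j}|^p=\tfrac12\,\omega^{VN}_{ii'}$. Summing over $k$, the first block becomes $\sum_{k=1}^{n-1}\omega^{VN}_{\sigma_r^{-1}(k)\,\sigma_r^{-1}(k+1)}$, which is exactly the weight in $(G_R,\omega^{VN})$ of the Hamiltonian path visiting rows in the order $\sigma_r^{-1}(1),\dots,\sigma_r^{-1}(n)$; the symmetry $\omega^{VN}_{ii'}=\omega^{VN}_{i'i}$ makes this well defined on the undirected graph $\K_n$. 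The identical argument gives that the second block equals $\sum_{\ell=1}^{m-1}\tau^{VN}_{\sigma_c^{-1}(\ell)\,\sigma_c^{-1}(\ell+1)}$, the weight of the corresponding Hamiltonian path in $(G_C,\tau^{VN})$. Finally I would invoke the bijection between $\Per_n$ (resp. $\Per_m$) and Hamiltonian paths of $\K_n$ (resp. $\K_m$) recalled before the theorem: since the total stress is the sum of a row-path weight and a column-path weight, and $\sigma_r,\sigma_c$ range independently, minimizing over $(\sigma_r,\sigma_c)$ amounts to finding a minimum weight Hamiltonian path in each of $(G_R,\omega^{VN})$ and $(G_C,\tau^{VN})$ separately, which is the claim. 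The only genuinely delicate point is the bookkeeping in the double-counting step — verifying that each adjacent pair is counted exactly twice, that boundary cells drop out cleanly, and that the factor $2$ in the definitions of $\omega^{VN}$ and $\tau^{VN}$ is precisely what absorbs it; the remaining steps are routine reindexing.
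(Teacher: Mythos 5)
Your proof is correct and follows essentially the same route as the paper's: you decompose the von Neumann stress into vertically and horizontally adjacent pairs, observe that each pair is counted twice (which the factor $2$ in $\omega^{VN}$ and $\tau^{VN}$ absorbs), reindex via the permutations to identify the two blocks with the weights of the row and column Hamiltonian paths, and conclude by separability. If anything, your version is more careful than the paper's about the direction of the permutation (using $\sigma_r^{-1},\sigma_c^{-1}$ consistently) and about the boundary cells, but the underlying argument is identical.
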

\begin{proof}

Observe now that under the von Neumann neighborhoods, the stress function for entry $(i,j)$ becomes:
\begin{eqnarray*}
\begin{split}
|a_{\sigma_r(i)\sigma_c(j)} - a_{\sigma_r(i)+1,\sigma_c(j)}|^p + |a_{\sigma_r(i)\sigma_c(j)} - a_{\sigma_r(i)-1,\sigma_c(j)}|^p +\\ |a_{\sigma_r(i)\sigma_c(j)} - a_{\sigma_r(i),\sigma_c(j)+1}|^p +
|a_{\sigma_r(i)\sigma_c(j)} - a_{\sigma_r(i),\sigma_c(j)-1}|^p,
\end{split}
\end{eqnarray*}
that is, it only depends on the next and previous elements in the sequence of rows induced by the permutations $\sigma_r$ and $\sigma_c$, separately, in case they exist. Thus, the global stress function (sum over all the indices $(i,j) \in N \times M$) coincides with:
\begin{eqnarray*}
\begin{split}
{\rm VN}^p(\sigma_r,\sigma_c) &:= 
\sum_{i\in N\atop j \in M} S_{ij}^{\ell_p}(\sigma_r,\sigma_c) = 
\sum_{i\in N\atop j \in M} \Big(
|a_{\sigma_r(i)\sigma_c(j)} - a_{\sigma_r(i)+1,\sigma_c(j)}|^p + 
|a_{\sigma_r(i)\sigma_c(j)} - a_{\sigma_r(i)-1,\sigma_c(j)}|^p+ \\ &
|a_{\sigma_r(i)\sigma_c(j)} - a_{\sigma_r(i),\sigma_c(j)+1}|^p +
|a_{\sigma_r(i)\sigma_c(j)} - a_{\sigma_r(i),\sigma_c(j)-1}|^p
\Big) = \\
& \sum_{i\in N\atop j \in M} \Big(
|a_{ij} - a_{i^+j}|^p + |a_{ij} - a_{i^-j}|^p  + |a_{ij} - a_{ij^+}|^p +
|a_{ij} - a_{ij^-}|^p \Big),
\end{split}
\end{eqnarray*}
where we denote by $i^+$ and $i^-$ ($j^+$ and $j^-$) the predecessor and successor row (respectively column) of row $i\in N$ (resp.\ column $j\in M$) with respect to the permutation $\sigma_r$ (resp.\ $\sigma_c$). 

Denoting by $H_N=(N,A_N)$ and $H_M=(M,A_M)$ the Hamiltonian paths induced by the permutations, the above expression is equivalent to:
\begin{eqnarray*}
\begin{split}
&\sum_{j\in M} \Big(\sum_{(i,k) \in A_N} |a_{ij} - a_{kj}|^p + \sum_{(k,i) \in A_N} |a_{ij} - a_{kj}|^p\Big) + \sum_{i\in N} \Big( \sum_{(j,\ell ) \in A_M} |a_{ij} - a_{i\ell }|^p + \sum_{(\ell ,j) \in A_M} |a_{ij} - a_{i\ell }|^p\Big) \\
&= 2 \sum_{j\in M} \sum_{(i,k) \in A_N} |a_{ij} - a_{kj}|^p + 2  \sum_{i\in N}\sum_{(j,\ell ) \in A_M} |a_{ij} - a_{i\ell }|^p = \sum_{(i,k) \in A_N} \omega_{ik}^A + \sum_{(j,\ell ) \in A_M} \tau_{j\ell }^A.
\end{split}
\end{eqnarray*}
Thus, the stress function coincides with the sum of the weights of the Hamiltonian paths. 

Since the stress expression is then separable by the two sets of nodes in the graphs ($N$ and $M$), it is minimized by finding the two minimum weight Hamiltonian paths.
 \end{proof}

\subsection{Moore Neighbors}

Another popular neighborhood shape for the entries of the matrix is Moore neighborhood. It corresponds to the sets $\N_{ij}^{1.5}$, that is, fixing $\varepsilon = 1.5$ (center plot in Figure \ref{fig:neigh}). For a general entry $(i,j)$ in the matrix $A$, not in the first or last row or column,  the neighbors are:
$$
\N_{ij}^{1.5} = \N_{ij}^1 \cup \{a_{i+1,j+1}, a_{i+1,j-1}, a_{i-1,j+1}, a_{i-1,j-1}\}.
$$
In this case, although still the two permutations to be found can be seen as Hamiltonian paths in a certain graph, the problem is no longer separable. In what follows, we detail how to develop a model, based on the construction of Hamiltonian paths with products of variables in the original space of variables. A similar approach can be also used to formulate the $\varepsilon=2$ case although in this case three indices controlling the sequences of three consecutive rows and columns are required.

Let us consider the graph $ G = \K_n \sqcup \K_m$, the disjoint union of the complete graphs with node sets the rows and columns of the matrix to be seriated. As already mentioned in the previous section, computing two disjoint Hamiltonian graphs in $G$ (one in $\K_n$ and the other in $\K_m$) results in a permutation of rows and columns of the matrix. To asses the goodness of the Hamiltonian path in view of the stress seriation measure for Moore neighbors, it is required to consider the interaction between the two Hamiltonian paths. Specifically, the stress measure in this case requires, apart from the stress function for the von Neumann neighbors described above, the following terms:
\begin{eqnarray*}
\begin{split}
|a_{\sigma_r(i)\sigma_c(j)} - a_{\sigma_r(i)+1,\sigma_c(j)+1}|^p + 
|a_{\sigma_r(i)\sigma_c(j)} - a_{\sigma_r(i)+1,\sigma_c(j)-1}|^p + \\ 
|a_{\sigma_r(i)\sigma_c(j)} - a_{\sigma_r(i)-1,\sigma_c(j)+1}|^p +
|a_{\sigma_r(i)\sigma_c(j)} - a_{\sigma_r(i)-1,\sigma_c(j)-1}|^p.
\end{split}
\end{eqnarray*}
Now, the measure does not only depend on the next and previous elements in the sequence of rows or columns, separately, but also on the position of the other permuted index.
Summing up all these terms in the global stress function we have:
\begin{eqnarray*}
&\displaystyle{\sum_{i\in N\atop j \in M} S_{ij}^{\ell_p}(\sigma_r,\sigma_c) =}\\
&\displaystyle{{\rm M}^p(\sigma_r,\sigma_c) + \sum_{i\in N\atop j \in M} \Big(|a_{ij} - a_{i^+j^+}|^p + |a_{ij} - a_{i^+j^-}|^p +|a_{ij} - a_{i^-j^+}|^p +|a_{ij} - a_{i^-j^-}|^p\Big).}
\end{eqnarray*}
%where, as in the previous section, we denote by $i^+$ and $i^-$ ($j^+$ and $j^-$) the predecessor and sucessor row (respectively column) of row $i\in N$ (respectively column $j\in M$) with respect to the permutations $\sigma_r$ and $\sigma_c$. 

Thus, denoting by $H_N=(N,A_N)$ and $H_M=(M,A_M)$ the Hamiltonian paths induced by the rows and columns permutations, the above expression is equivalent to:
\begin{eqnarray*}
\begin{split}
& 2 \sum_{j\in M} \sum_{(i,k) \in A_N} |a_{ij} - a_{kj}|^p + 
2  \sum_{i\in N}\sum_{(j,\ell) \in A_M} |a_{ij} - a_{i\ell}|^p = \\
& \sum_{(i,k) \in A_N} \omega_{ik}^N + \sum_{(j,\ell) \in A_M} \omega^M_{j\ell} + 
2 \dsum_{(i,k) \in A_N} \sum_{(j,\ell) \in A_M} \Big(|a_{ij}-a_{k\ell}|^p + |a_{i\ell}-a_{kj}|^p \Big).
\end{split}
\end{eqnarray*}

Thus, using the $z$-, $t$- and $g$-variables given above, 
the following model constructs the optimal stress seriation for the matrix under the Moore neighbors:
\begin{align}
   \min & 
    \sum_{i,k \in N\atop i\neq k}       \omega^N_{ik} z^N_{ik} + 
    \sum_{j,\ell\in M\atop j\neq \ell } \omega^M_{j\ell } z^M_{j\ell } + \nonumber\\
    & \dsum_{i,k \in N\atop i\neq k} \sum_{j,\ell \in M\atop j\neq\ell } 
    \Big( |a_{ij}-a_{k\ell }|^p + |a_{i\ell }-a_{kj}|^p \Big) z^N_{ik} z^M_{j\ell } \label{tsp:obj12} \\
\mbox{s.t. } & \sum_{s\in R\atop r\neq s} z^R_{rs} + t^R_r = 1 & \forall r\in R, & R=N,M \label{tsp:11}\\
    & \sum_{r \in R} t^R_r = 1 && R=N,M \label{tsp:12} \\
    & \sum_{r\in R\atop r\neq s} z^R_{rs} \le 1 & \forall s\in R, & R=N,M\label{tsp:13}\\
    & \sum_{s\in R} g^R_{rs} \ge \sum_{s\in R} g^R_{sr}-|R|t^R_r+1 & \forall r\in R, & R=N,M \label{tsp:14}\\
    & g^R_{rs}\le (|R|-1)z^R_{rs} & \forall r,s \in R (r\neq s) &, R=N,M \label{tsp:15}\\
    & g^R_{rs} \ge z^R_{rs} & \forall r,s\in R (r\neq s), & R=N,M \label{tsp:16} \\
    & z^R_{rs} \in \{0,1\} & \forall r,s \in R (r\neq s), & R=N,M \label{tsp:17} \\
    & t^R_r \in \{0,1\} & \forall r\in R, & R=N,M. \label{tsp:18}
\end{align}

An alternative form of objective function for this problem is
$$ \dsum_{i, k \in N\atop i\neq k} \sum_{j,\ell \in M\atop j\neq \ell} \Big(
|a_{ij}-a_{k\ell}|^p + |a_{i\ell}-a_{kj}|^p + |a_{ij}-a_{i\ell}|^p + |a_{ij}-a_{kj}|^p \Big) z^N_{ik} z^M_{j\ell}.$$
This, although valid, results in worse linear programming bounds and therefore in worse computing times.

\subsection{Measure of Efficiency}

Finding the optimal seriation with respect to the measure of efficiency can be also cast in the framework of computing a maximal Hamiltonian path on the set of rows/columns with certain weights. Specifically, defining:
    \begin{eqnarray*}
        \omega_{ik}^{ME} = \sum_{j\in M} a_{ij}a_{kj}, \quad \forall i,k\in N,\\
        \tau_{j\ell}^{ME}= \sum_{i\in N} a_{ij}a_{i\ell}, \quad \forall j,\ell\in M
    \end{eqnarray*}
we get the following result.
\begin{thm}
    The maximization of the measure of effectiveness  (\ref{eq:Mcornick}) is equivalent to find two maximum weighted Hamiltonian paths with weights $\omega^{ME}$, $\tau^{ME}$ on the networks $G_R$ and $G_C$, respectively.
\end{thm}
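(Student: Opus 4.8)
The plan is to follow the same route as the proof of Theorem~\ref{thm:1}: the Measure of Effectiveness of the transformed matrix only couples entries that are horizontally or vertically adjacent, so it splits into a contribution depending solely on the row order and one depending solely on the column order, and these two pieces are exactly the total weights of Hamiltonian paths in $G_R$ and $G_C$.

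First I would rewrite ${\rm ME}(A_{\sigma_r,\sigma_c})$ in terms of the permutations. Using the notation of the proof of Theorem~\ref{thm:1}, let $i^+,i^-$ (resp.\ $j^+,j^-$) denote the successor and predecessor of row $i\in N$ under $\sigma_r$ (resp.\ of column $j\in M$ under $\sigma_c$), with the convention that a missing neighbor contributes a null entry, and let $H_N=(N,A_N)$, $H_M=(M,A_M)$ be the Hamiltonian paths induced by $\sigma_r$ and $\sigma_c$. Substituting the entries of $A_{\sigma_r,\sigma_c}$, namely $(A_{\sigma_r,\sigma_c})_{k\ell}=a_{\sigma_r^{-1}(k)\sigma_c^{-1}(\ell)}$, into \eqref{eq:Mcornick} and reindexing, the block of horizontal products becomes $\sum_{i\in N}\sum_{j\in M} a_{ij}(a_{ij^+}+a_{ij^-})$ and the block of vertical products becomes $\sum_{i\in N}\sum_{j\in M} a_{ij}(a_{i^+j}+a_{i^-j})$. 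Since each consecutive pair of columns $(j,\ell)\in A_M$ is counted once from each of its two endpoints (and likewise for rows),
\begin{align*}
\sum_{i\in N}\sum_{j\in M} a_{ij}\big(a_{ij^+}+a_{ij^-}\big) &= 2\sum_{(j,\ell)\in A_M}\sum_{i\in N} a_{ij}a_{i\ell} = 2\sum_{(j,\ell)\in A_M}\tau^{ME}_{j\ell},\\
\sum_{i\in N}\sum_{j\in M} a_{ij}\big(a_{i^+j}+a_{i^-j}\big) &= 2\sum_{(i,k)\in A_N}\sum_{j\in M} a_{ij}a_{kj} = 2\sum_{(i,k)\in A_N}\omega^{ME}_{ik}.
\end{align*}
Plugging these back into \eqref{eq:Mcornick} expresses ${\rm ME}(A_{\sigma_r,\sigma_c})$ as a positive linear combination of $\sum_{(i,k)\in A_N}\omega^{ME}_{ik}$ and $\sum_{(j,\ell)\in A_M}\tau^{ME}_{j\ell}$, that is, up to harmless per-block scaling it equals the weight of $H_N$ in $(G_R,\omega^{ME})$ plus the weight of $H_M$ in $(G_C,\tau^{ME})$.

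To finish, I would invoke the bijection between pairs of permutations $(\sigma_r,\sigma_c)$ and pairs of Hamiltonian paths $(H_N,H_M)$ in $(G_R,G_C)$ recalled at the start of Section~\ref{sec:3}, together with the separability just established: one summand depends only on $H_N$ and the other only on $H_M$, so maximizing ${\rm ME}$ over $(\sigma_r,\sigma_c)$ amounts to maximizing each summand independently, i.e.\ to computing a maximum-weight Hamiltonian path in $(G_R,\omega^{ME})$ and a maximum-weight Hamiltonian path in $(G_C,\tau^{ME})$. When $n=m$ and the same permutation is imposed on rows and columns, the two paths coincide and the objective becomes $\sum_{(i,k)\in A_N}(\omega^{ME}_{ik}+\tau^{ME}_{ik})$ over a single Hamiltonian path, exactly as in the remark following model (HPM).

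The only delicate point is the boundary bookkeeping: each vertical adjacency $\{k,k+1\}$ (resp.\ horizontal adjacency $\{\ell,\ell+1\}$) of the transformed matrix is counted once from each endpoint in the double sum, which produces the factor $2$, while the two absent end-neighbours correspond precisely to a Hamiltonian path on $|R|$ vertices having $|R|-1$ edges; one should also reconcile the coefficients of the two blocks in \eqref{eq:Mcornick} (the $\tfrac12$ in front of the first block), which only rescales $\omega^{ME}$ relative to $\tau^{ME}$ and hence leaves the set of optimal paths unchanged. Beyond that, the argument is a verbatim analogue of the von Neumann computation in Theorem~\ref{thm:1}.
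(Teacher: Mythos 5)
Your proposal is correct and takes exactly the route the paper intends: the paper's own proof of this theorem is the one-line remark that it ``runs analogous to that of Theorem~\ref{thm:1}'', and what you have written is precisely that analogous computation carried out in full. Your handling of the factor $2$ from counting each adjacent pair once from each endpoint, and your observation that the differing coefficients of the two blocks in \eqref{eq:Mcornick} only rescale the two separable summands and hence do not change the optimal paths, are both sound.
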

\begin{proof}
    The proof runs analogous  to that of Theorem \ref{thm:1}.
\end{proof}

\subsection{Other neighborhoods}

The models described above for the computation of an optimal stress matrix seriation can be adapted to other types of neighbors different from the classical ones of von Neumann and Moore. The main difference stems in the objective function measuring the \emph{distance} between the transformed cells and its neighbors. In what follows we describe two other types of neighbors where this methodology can be applied.

{\bf 2-steps cross-shaped neighbors: }  We assume now that the neighbors of a cell $(i,j)$ in the matrix are
$$
\N_{ij}^{\oplus} = \{(k,\ell) \in N \times M: (|k-i|\leq 2  \text{ and } \ell =0) \text{ or } (k =0 \text{ and } |j-\ell |\leq 2)\}
$$
as those drawn in Figure \ref{fig:neigh_cross}.

\begin{figure}[h]
    \begin{center}
    \begin{tikzpicture}[scale=0.5]
        % Define grid size
        \def\n{7} % Grid size (5x5)

        % Define the highlighted cell (central)
        \def\hx{5} % X-coordinate of highlighted cell
        \def\hy{5} % Y-coordinate of highlighted cell
        
        % Highlight the main cell
        \fill[black] (\hx,\hy) rectangle (\hx+1,\hy+1);

                % Highlight surrounding cells (neighbors)
        \foreach \dx/\dy in {0/1, 1/0, 0/-1, -1/0, 0/2, 2/0, 0/-2, -2/0} {
            \fill[gray] (\hx+\dx,\hy+\dy) rectangle (\hx+\dx+1,\hy+\dy+1);
        }
                % Define the highlighted cell (central)
        \def\hx{1} % X-coordinate of highlighted cell
        \def\hy{1} % Y-coordinate of highlighted cell
        
        % Highlight the main cell
        \fill[black] (\hx,\hy) rectangle (\hx+1,\hy+1);

        % Highlight surrounding cells (neighbors)
        \foreach \dx/\dy in {0/1, 1/0, 0/2, 2/0} {
            \fill[gray] (\hx+\dx,\hy+\dy) rectangle (\hx+\dx+1,\hy+\dy+1);
        }

        % Label the grid points
        \foreach \x in {1,...,\n} {
            \foreach \y in {1,...,\n} {
                \node at (\x+0.5,\y+0.5) {};%\small (\x,\y)};
            }
                % Draw the grid
        \foreach \x in {1,...,\n} {
            \foreach \y in {1,...,\n} {
                \draw[black] (\x,\y) rectangle (\x+1,\y+1);
            }
        }
        }
    \end{tikzpicture}
\end{center}
\caption{2-steps cross shaped neighbors\label{fig:neigh_cross}}
\end{figure}
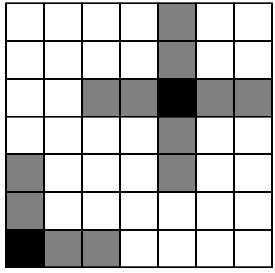

In this case, one can slightly modify the model \eqref{tsp:obj12}-\eqref{tsp:18} by introducing the variables $u^R_r$, 
for $R=N,M$ and $r\in R$, to account for the costs of comparing row (respectively column) $r$ with row (resp.\ column)
two upper rows (columns) in the rearranged matrix.
With the same set of variables as above, the new variables are adequately defined by the following set of constraints:
\begin{align*}
u^R_r \geq  \sum_{t\in R} \omega_{rt}^R z^R_{st} - \Delta^R_r (1-z^R_{rs})\ \ \ \forall r,s \in R,
\end{align*}
where $\Delta_r^R=\dmax_{s \in R} \omega^R_{rs}$, for all $r\in R$ and $R=N,M$.

The above constraints assure that in case $r$ precedes $s$ and row $s$ precedes row $t$ in the Hamiltonian path, then $u^R_r$ is at least $\omega^R_{rt}$. These constraints, together with the minimization of the objective function
$$
\sum_{i,k \in N\atop i\neq k}      \omega^N_{ik} z^N_{ik} + 
\sum_{j,\ell\in M\atop j\neq \ell }\omega^M_{j\ell } z^M_{j\ell} +
\sum_{i \in N} u^N_i + \dsum_{j\in M} u^M_j$$
assure the correct stress measure.

\subsection{Tailoring Seriation through linear Constraints}

The main advantage of the optimization-based methodologies we propose for computing the optimal seriation of a matrix under various measures is their flexibility to incorporate additional constraints on the reordered matrix. In what follows, we describe several types of constraints that may be relevant in this framework::
\begin{itemize}
    \item {\bf Cluster-based constraints}: Pre-specified groups of observations are usually required to closely appear in the resulting reordered matrix. This has been the case of archeological pieces whose nature has to be identified, but that have been found in the same region. Thus, the \emph{indices} between some groups observations want to be bounded above. Let then $\mathcal{C} \subset N$ the groups of indices that are to be \emph{protected}, and $\kappa \in \Z_+$ with $\kappa <n$. The following constraint ensures that the observations in $\mathcal{C}$ verify this condition:
    $$
    \left|\sum_{k\in N} g^N_{ik} - \sum_{k\in N} g^N_{jk}\right| \leq \kappa,\ \forall i, j \in \mathcal{C}.
    $$
    \item \textbf{Specification of flexible sorting positions:} In some applications, it may be desirable to position certain rows or columns at specific locations in the reordered matrix. For instance, some observations might need to appear at the center or at the edges of the matrix to enhance visual interpretation. 

For the case of rows, let $\mathcal{C} \subset N$ be a set of observations, and let $\mathcal{P} \subset N$ be a set of desired positions. The following constraint:
$$
\sum_{k \in \mathcal{P}} g_{ik}^N = 1, \quad \forall i \in \mathcal{C},
$$
enforces that each observation in $\mathcal{C}$ is assigned to a position in $\mathcal{P}$.
\end{itemize}

\section{Computational Experiments}  \label{sec:4}

In this section we report the results of the computational experiments that we run to validate our proposals and analyze the performance of the different approaches we provide.

All the codes and datasets used in our experiments are available at our GitHub repository: \url{https://github.com/vblancoOR/seriation_mathopt}.

\subsection{Synthetic Data}

We have generated different instances for our matrix seriation problems by creating multiple structured matrices of different types and sizes. The matrices vary in size, density and structural properties, making them suitable for evaluating our approaches.

The generated matrices are parameterized by their number of rows ($n$) and columns ($m$), where the values are drawn from the set $\{10, 20, 30, 40, 50, 100\}$ and such that either $n=m$ or $n<m$. Each configuration is iterated five times to introduce variability across instances.

We generate four types of instances:
\begin{itemize}
    \item \texttt{easy} instances: square matrices ($n \times n$) designed to contain structures that facilitate ordering. To generate them, we draw $n$ points in $[0,100]$ and output their distance matrix (pairwise absolute differences).
    \item \texttt{structured square} instances: Square matrices ($n\times n$) that contain more complex structured patterns. They are aso generated as pairwise Euclidean distances between randomly generated points in $[0,100]\times [0,100]$.
	\item \texttt{nonsquare} instances: For $n < m$, rectangular matrices ($n \times m$) are created with structured patterns that introduce complexity in ordering tasks. They are generated also as pairwise Euclidean distances between two sets of ($n$ and $m$) randomly generated points in $[0,100]\times [0,100]$.
    \item \texttt{binary square} instances: We randomly generate binary square instances ($n\times n$), but with certain densities (percent of elements that are $1$ in the whole matrix). We choose densities in $\{25\%,50\%,75\%\}$.
	\item \texttt{binary nonsquare} instances: Rectangular matrices ($n \times m$) that contain only binary values, and with the same density parameters as the binary square instances.
\end{itemize}
In total, we generate 450 instances that are available at the \texttt{github} repository \url{https://github.com/vblancoOR/seriation_mathopt}. For each of them we run the following models:
\begin{description}
    \item[Standard Stress Measures] For all the instances with $n\leq 20$, we run the three models proposed for the von Neumann and the Moore Strees Neighborhoods, namely, the \emph{position assignment model} (PAM) with the two type of linearizations, that we call \texttt{PAM\_L1} and \texttt{PAM\_L2}. For all the instances, we run the Hamiltonian path model (\texttt{HPM}).
    \item[Measure of Effectiveness] For all the instances we run the Hamiltonian path model \texttt{HPM} for the ME measure.
    \item[2-Crossed Neighborhoods] For the non-standard measures based on neighborhoods $\mathcal{N}_\varepsilon$ with $\varepsilon=3$, we run the position assignment models with the two linearizations.
    \item[Coordinated Rows/Columns] For the square matrices, we run all the models both requiring coordination of rows and columns in the seriation or not. For the non square matrices, we only run the models where the coordination is not imposed.
\end{description}
In total, we run $2760$ models for the generated instances. The formulations have been coded in Python 3.8 in a Huawei FusionServer Pro XH321 (\texttt{albaic\'in} at Universidad de Granada - \url{https://supercomputacion.ugr.es/arquitecturas/albaicin/}) with an Intel Xeon Gold 6258R CPU @ 2.70GHz with 28 cores. We used Gurobi 10.0.3 as  optimization solver. A time limit of 1 hour was fixed for each instance.

First we summarize some of the computational results of our experiments for the small ($n\leq 20$) instances. %There, we first indicate in the first columns the information about the instances, the type (\texttt{easy}, \texttt{square}, \texttt{nonsquare}, and \texttt{binary}) of the instances, the number of rows (\texttt{n}) and columns \texttt{m}, for the binary instances we also indicate the density of ones (25\%, 50\%, and 75\%). Then, we report the average CPU times (in seconds)--\texttt{CPUTime} for solving the instances, the average MIP Gap (when the time limit was reached without proving optimality of the solution)--\texttt{MIPGap}, and the percent of instances that reached this time limit--\texttt{\%TL}, for all the different approaches (different measures and coordination or not between rows and columns).

In Figure \ref{fig:Time_Formulation} we compare the performance on the small instances for the three types of formulations, namely, \texttt{PAM\_L1}, \texttt{PAM\_L2} and \texttt{HPM}. The $x$-axis represents CPU time (in seconds) for solving the instance, while the $y$-axis indicates the percentage of instances for which the approach is able to provide the optimal solution of the problem. As can be observed, the performance of the Hamiltonian path approach is much better than the other two approaches, being able to solve most of the instances in less than 10 minutes. The second best approach seems to be the one based on the alternative linearization that we propose for the general formulation.

\begin{figure}[h]
\centering\includegraphics[width=0.8\textwidth]{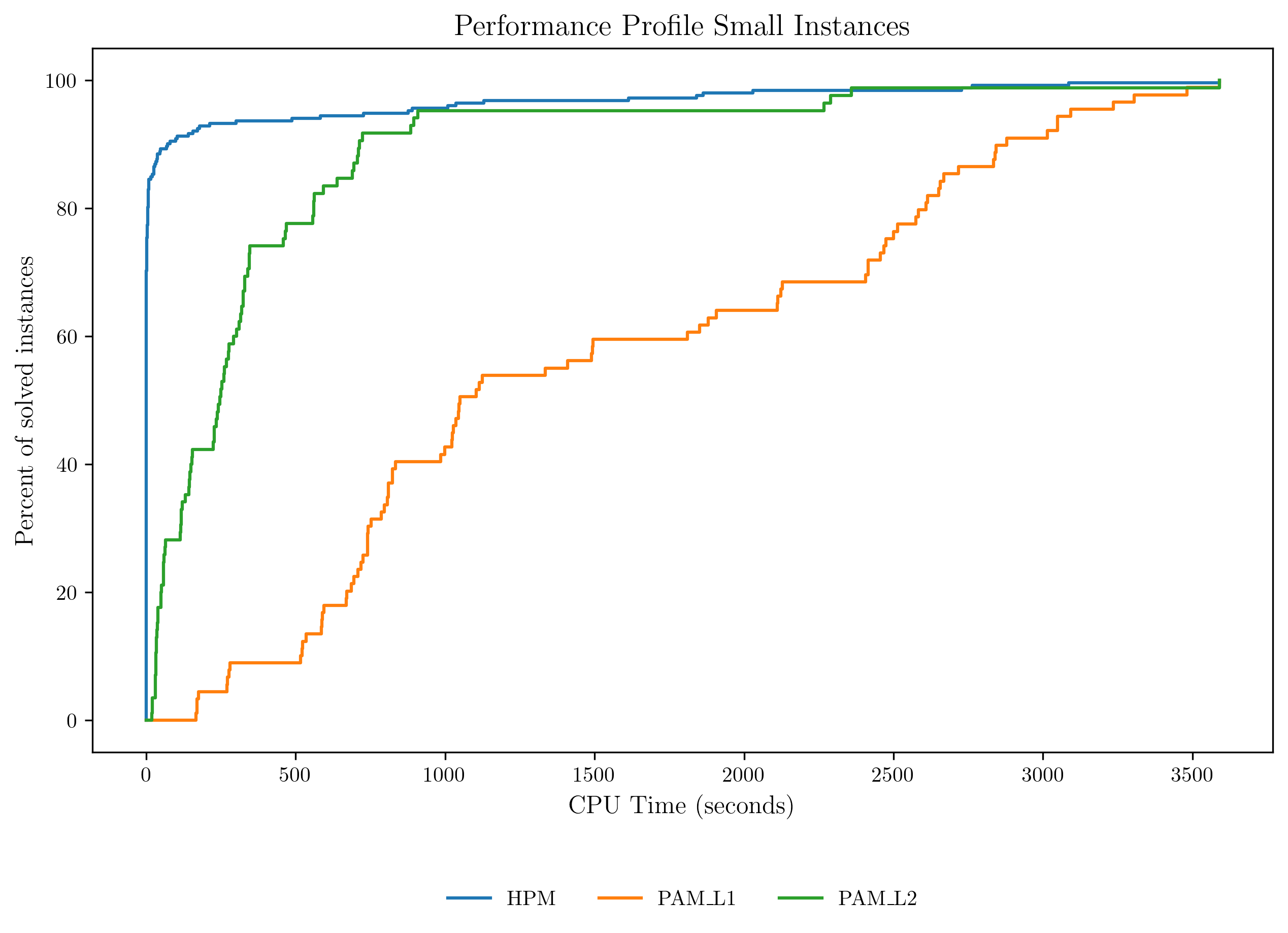}
\caption{Performance profile of our three types of models for the small instances\label{fig:Time_Formulation}}
\end{figure}

In Figure \ref{fig:Time_by_type} we report the performance profiles of each of the approaches, for the different types of instances that we tested in our experiments, and that were able to be optimally solved in less than 1 hour. The \texttt{HPM} approach is clearly the most \emph{stable} approach, with a similar performance for all types of instances. The general approach \texttt{PAM\_L1} performs very differently for the different types, being apparently the \texttt{eas} instances easier to solve than the others, followed by \texttt{sqr}, then \texttt{bin}, and finally \texttt{nsq}, for which none of the instances were solved up to optimality within the time limit. The performance of \texttt{PAM\_L2} looks a bit better although, again, the type of instances \texttt{nsq} seems to be very challenging for this formulation, and none of the instances of this type was optimally solved. We do not distinguish the performance of the \texttt{HPM} approach for the different densities of the \texttt{bin} type since there are no significant differences between them.

\begin{figure}[h]
\includegraphics[width=0.5\textwidth]{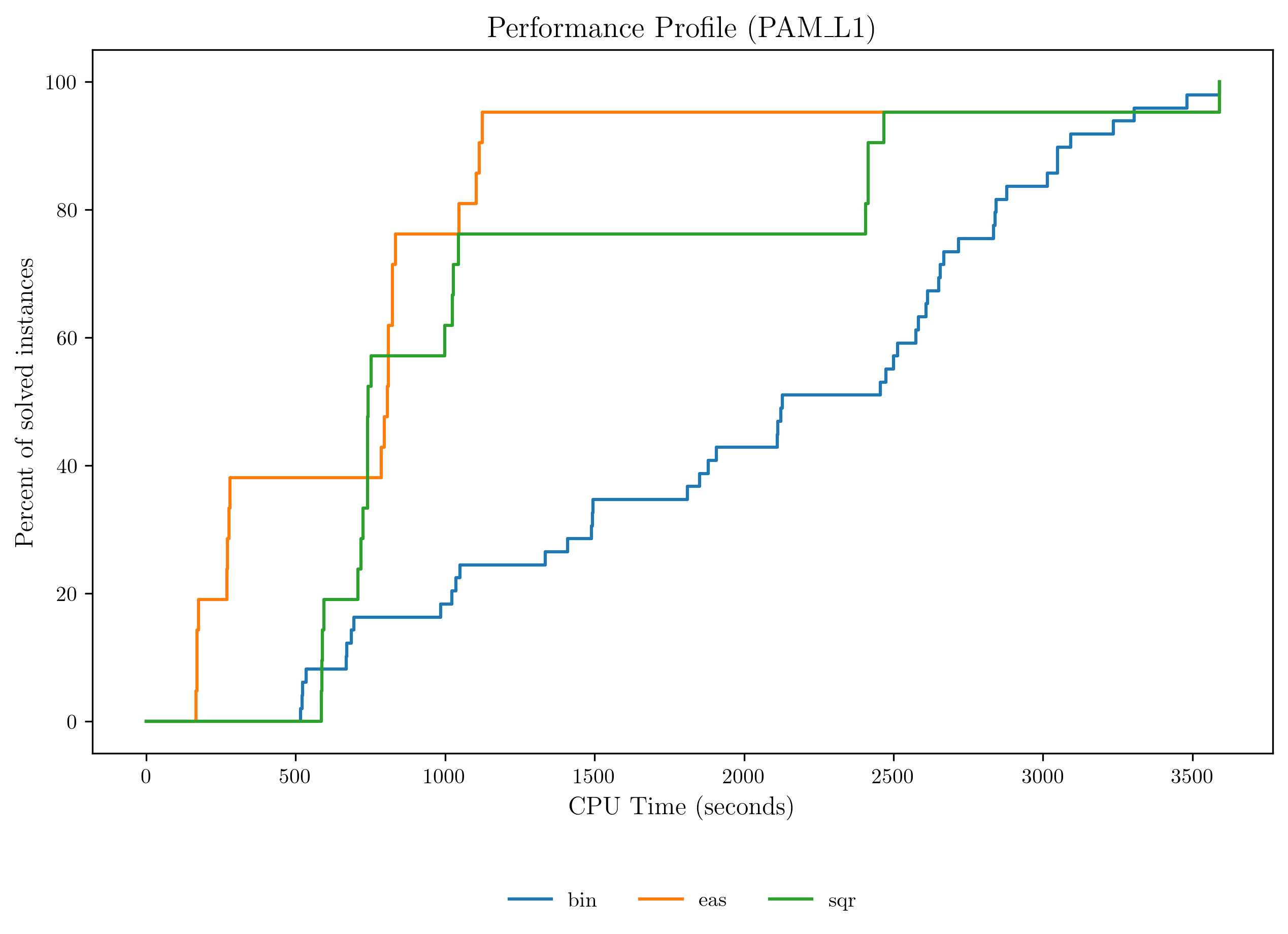}~\includegraphics[width=0.5\textwidth]{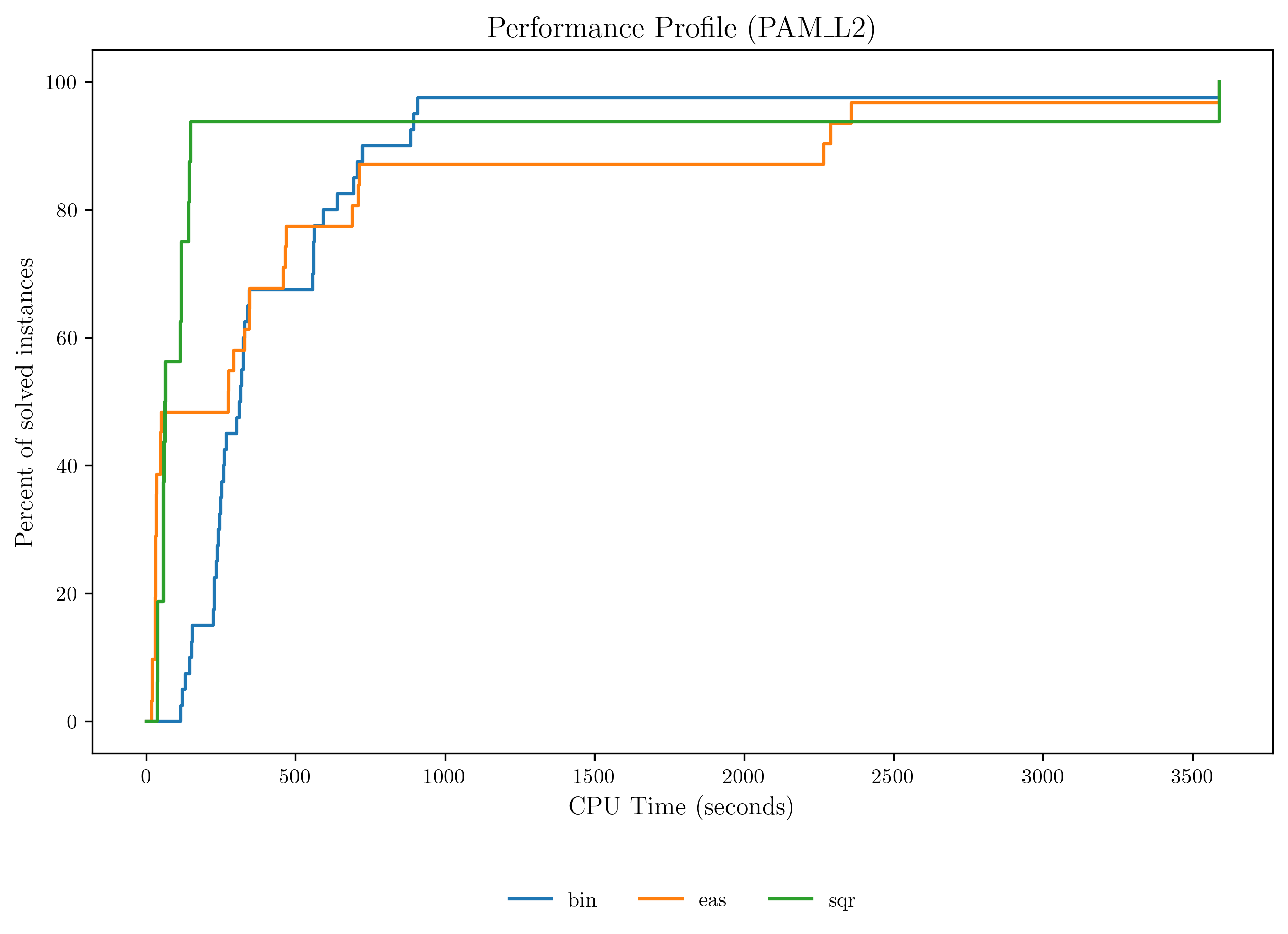}\\
\centerline{\includegraphics[width=0.5\textwidth]{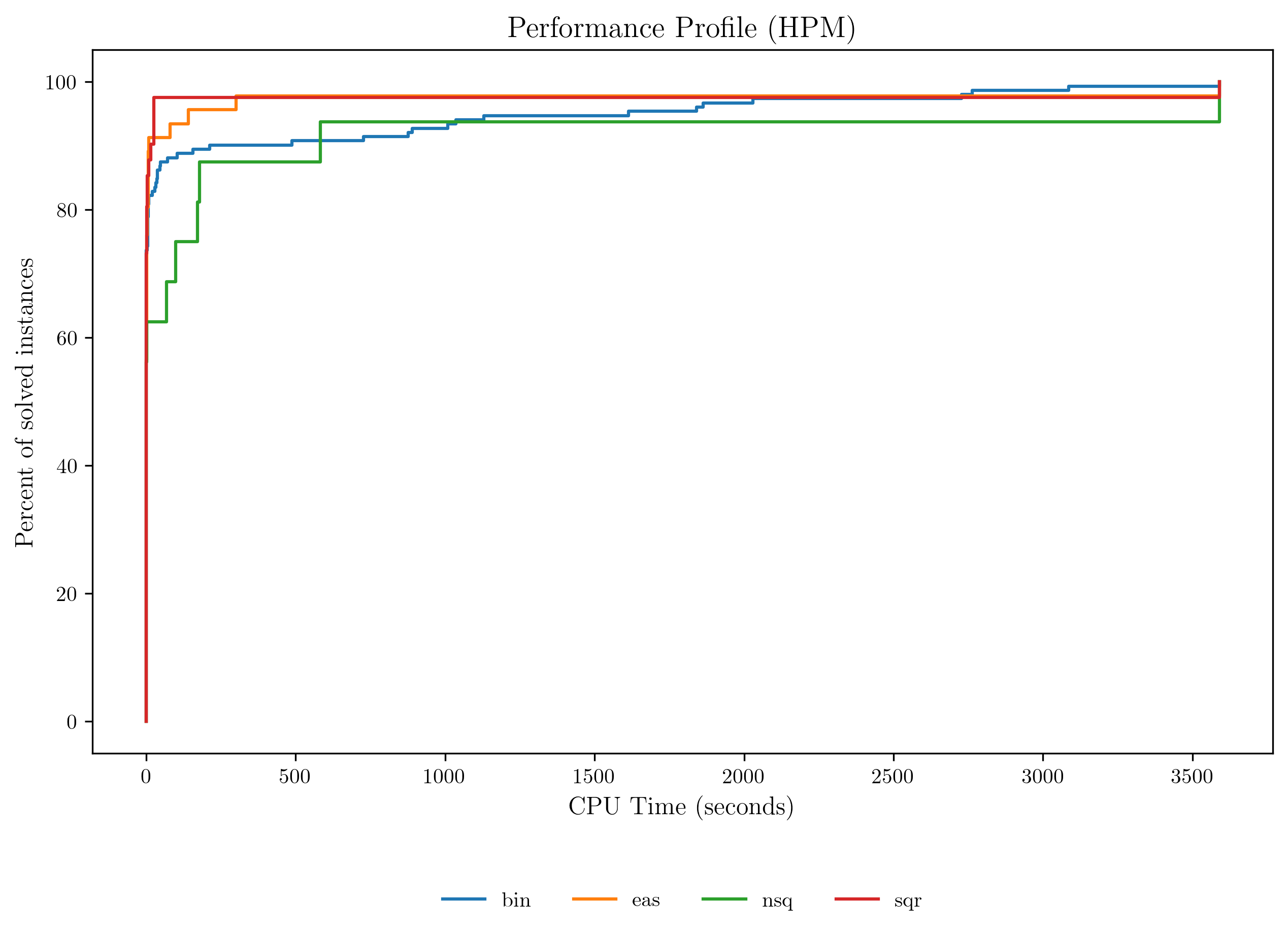}}
\caption{Performance profile of our three types of models for the small instances, for the different classes of instances that we tested\label{fig:Time_by_type}}
\end{figure}

A similar performance is observed in Figure \ref{fig:Time_by_coord} when distinguishing, for the square matrices, between the performance for the instances when it is forced to sort the rows and columns jointly or separately. While \texttt{PAM\_L1} was not able to solve optimally any of the \emph{separated} instances, \texttt{PAM\_L2} found a lot of differences in their performances, and in \texttt{HPM} the differences are negligible.

\begin{figure}[h]
\includegraphics[width=0.5\textwidth]{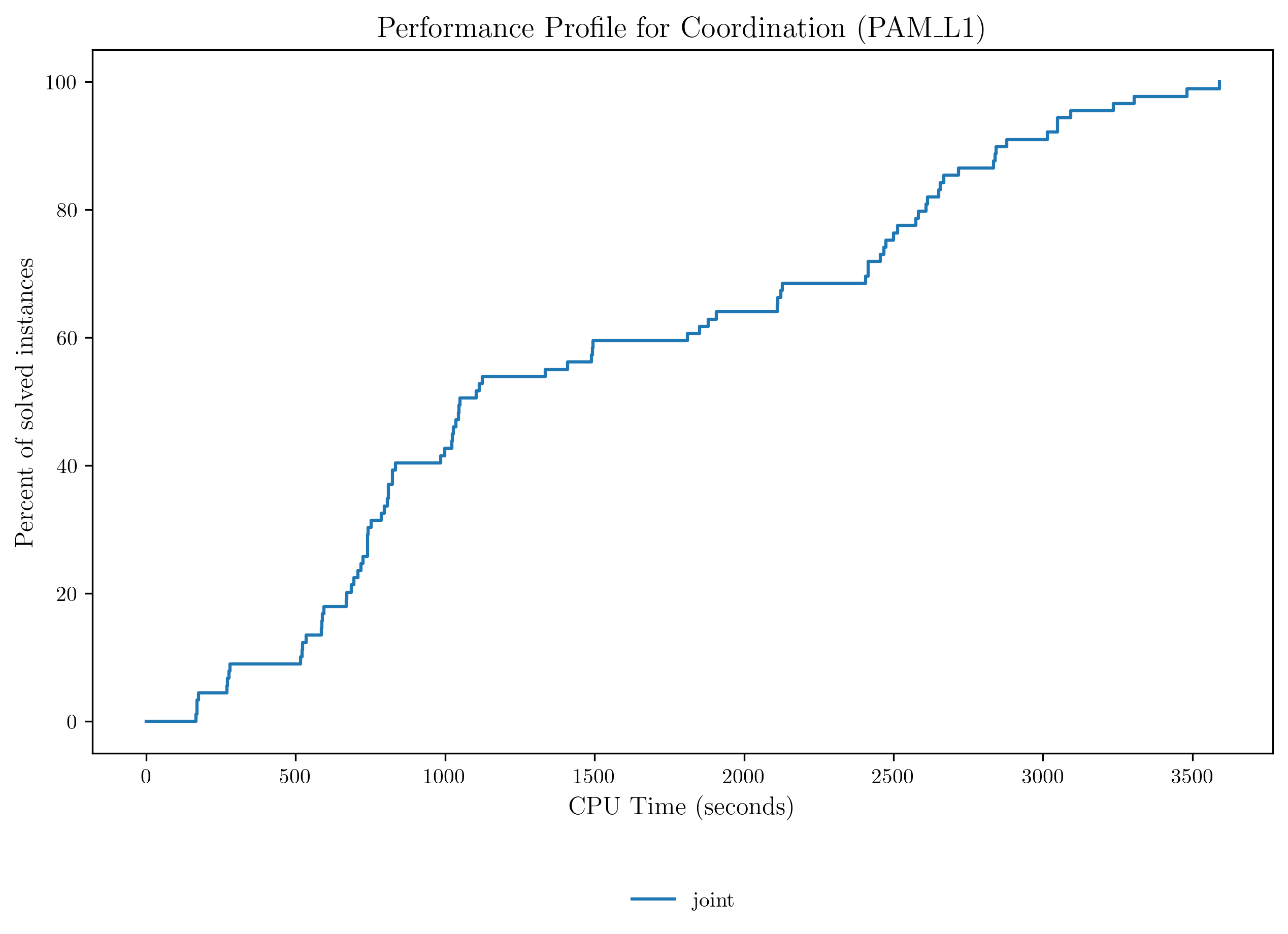}~\includegraphics[width=0.5\textwidth]{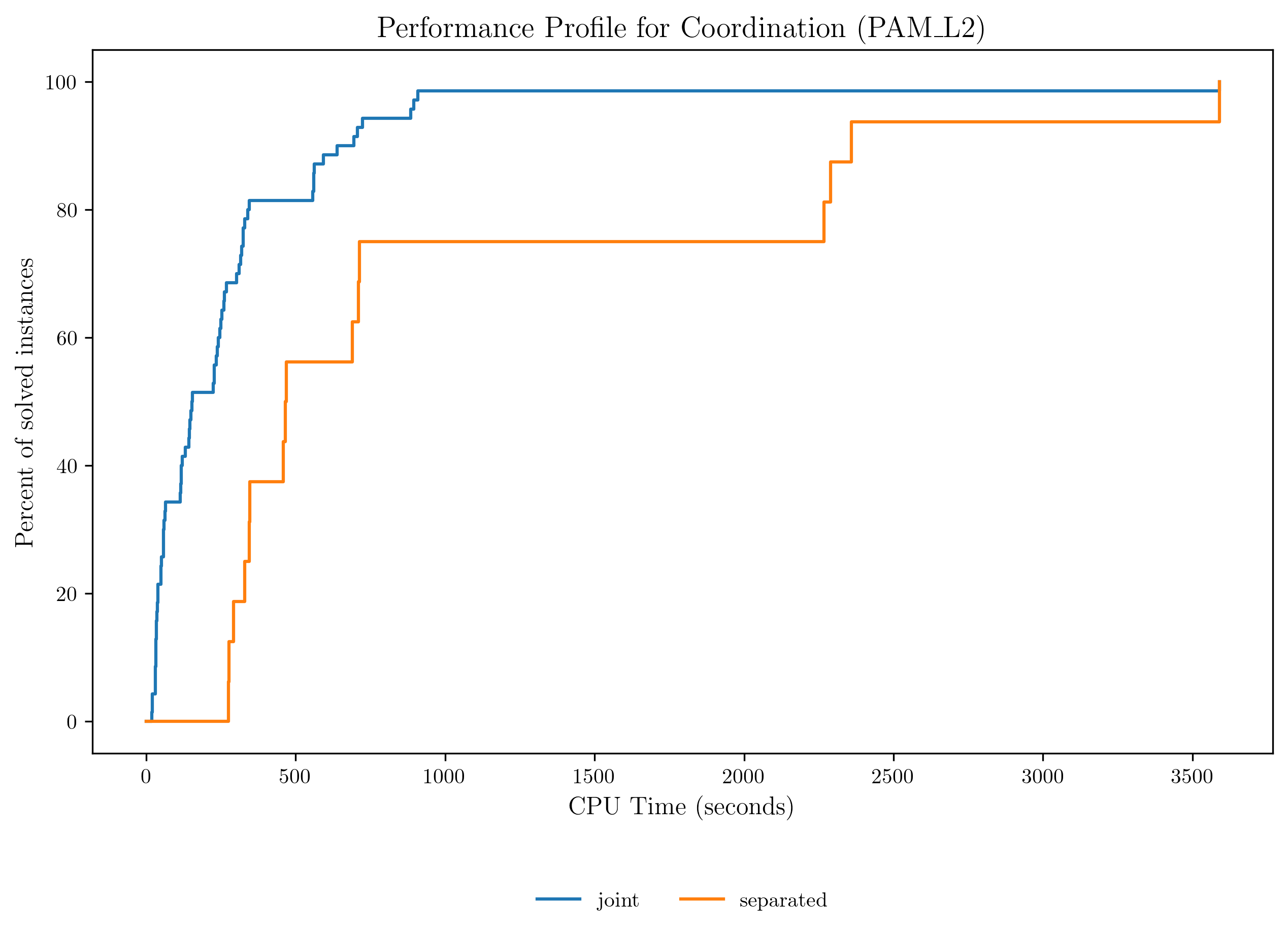}\\
\centerline{\includegraphics[width=0.5\textwidth]{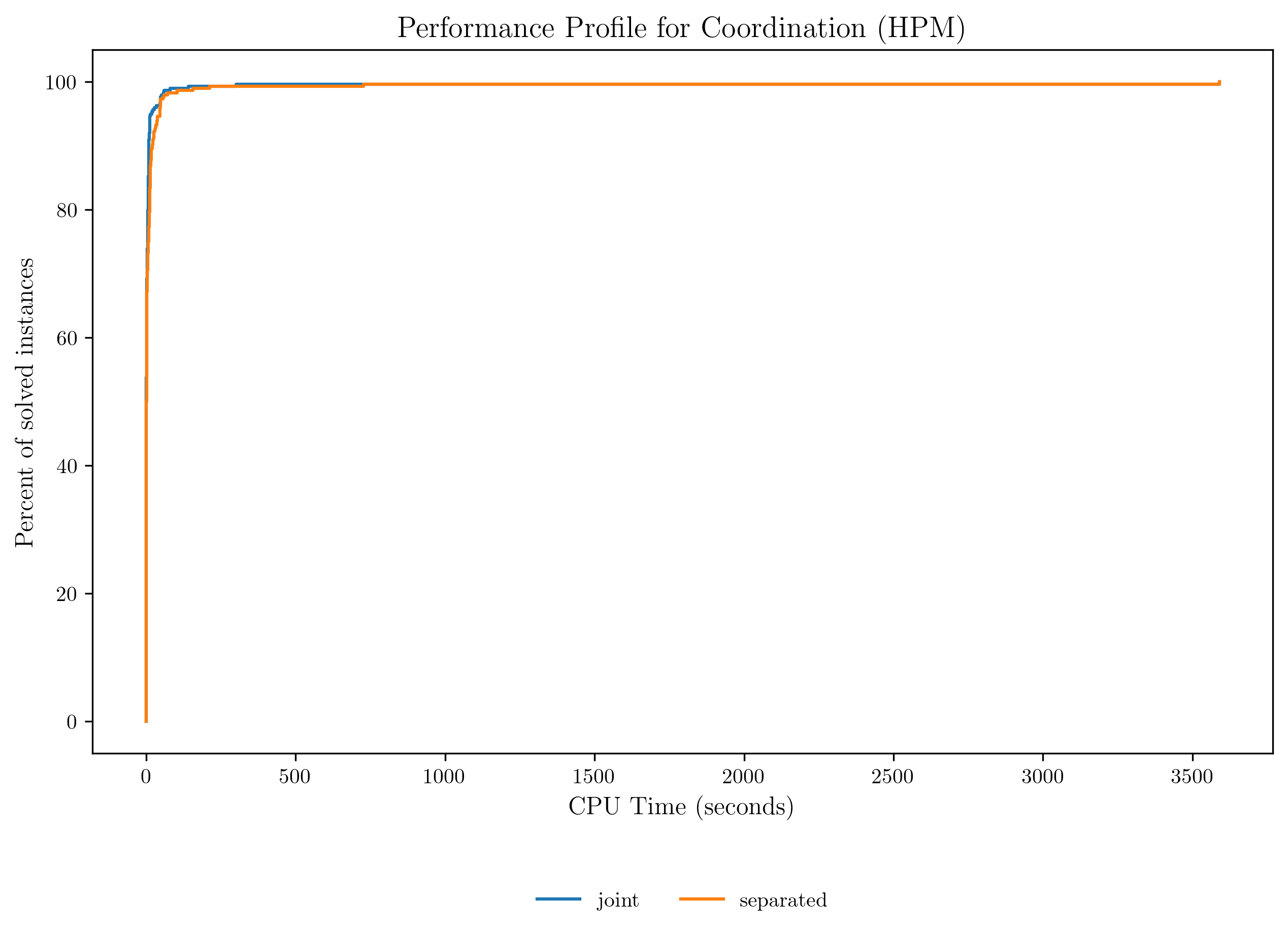}}
\caption{Performance profile of our three types of models for the small instances, for the joint or separate coordination of sorting rows and columns\label{fig:Time_by_coord}}
\end{figure}

A different perspective is found when analyzing the effect of the different measures (ME, von Neumann, Moore, and 2-Crossed) in the performance of the formulations, shown in Figure \ref{fig:Time_by_meas}. The position assignment models \texttt{PAM\_L1} and \texttt{PAM\_L2}, if able, compute the optimal solutions in similar times for the different approaches, whereas \texttt{HPM} found differences between the different measures. Furthermore, although possible, the incorporation of non standard neighborhoods, as the 2-Crossed one, is more difficulty considered in the Hamiltonian path approaches.

\begin{figure}[h]
\includegraphics[width=0.5\textwidth]{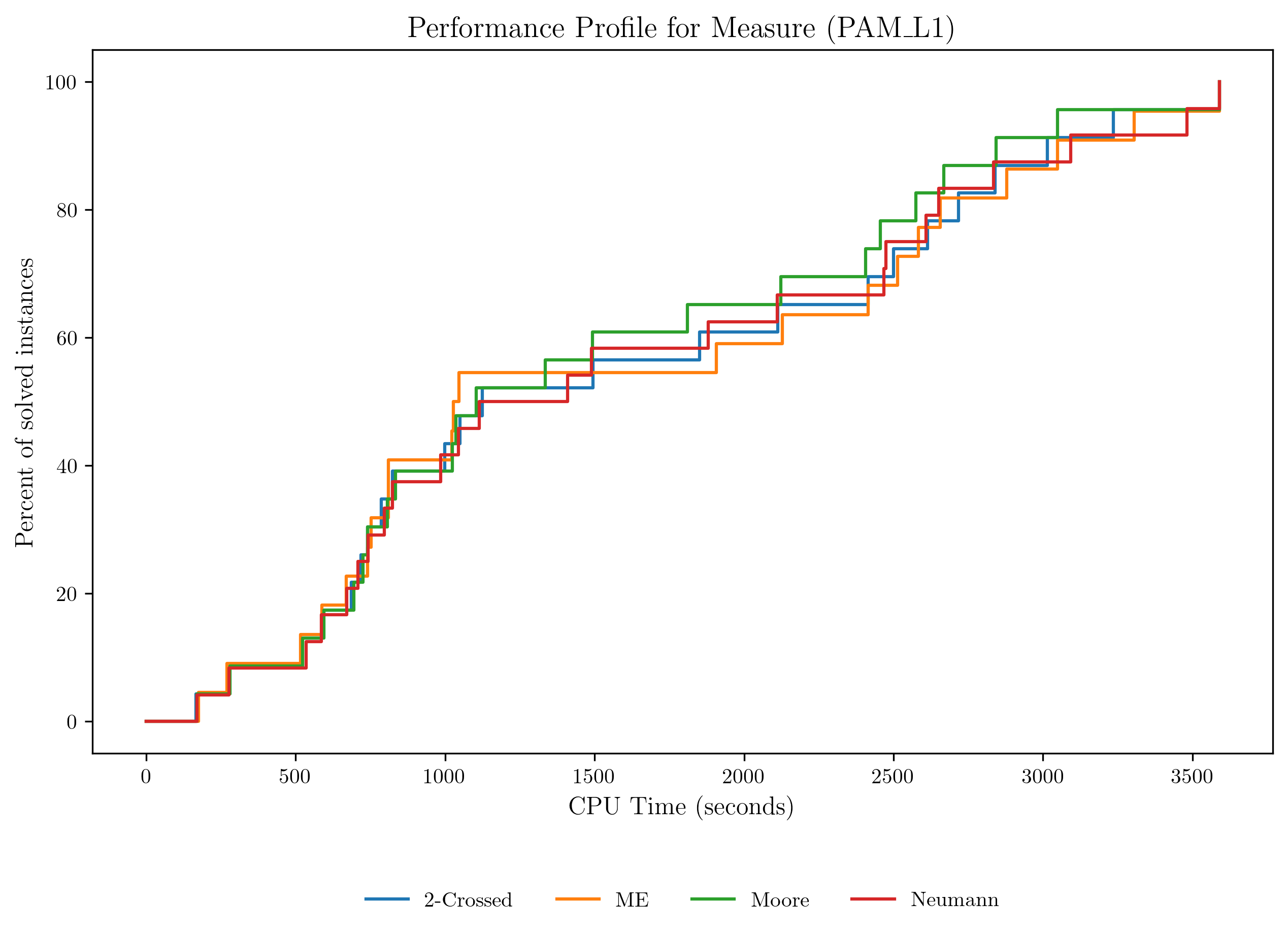}~\includegraphics[width=0.5\textwidth]{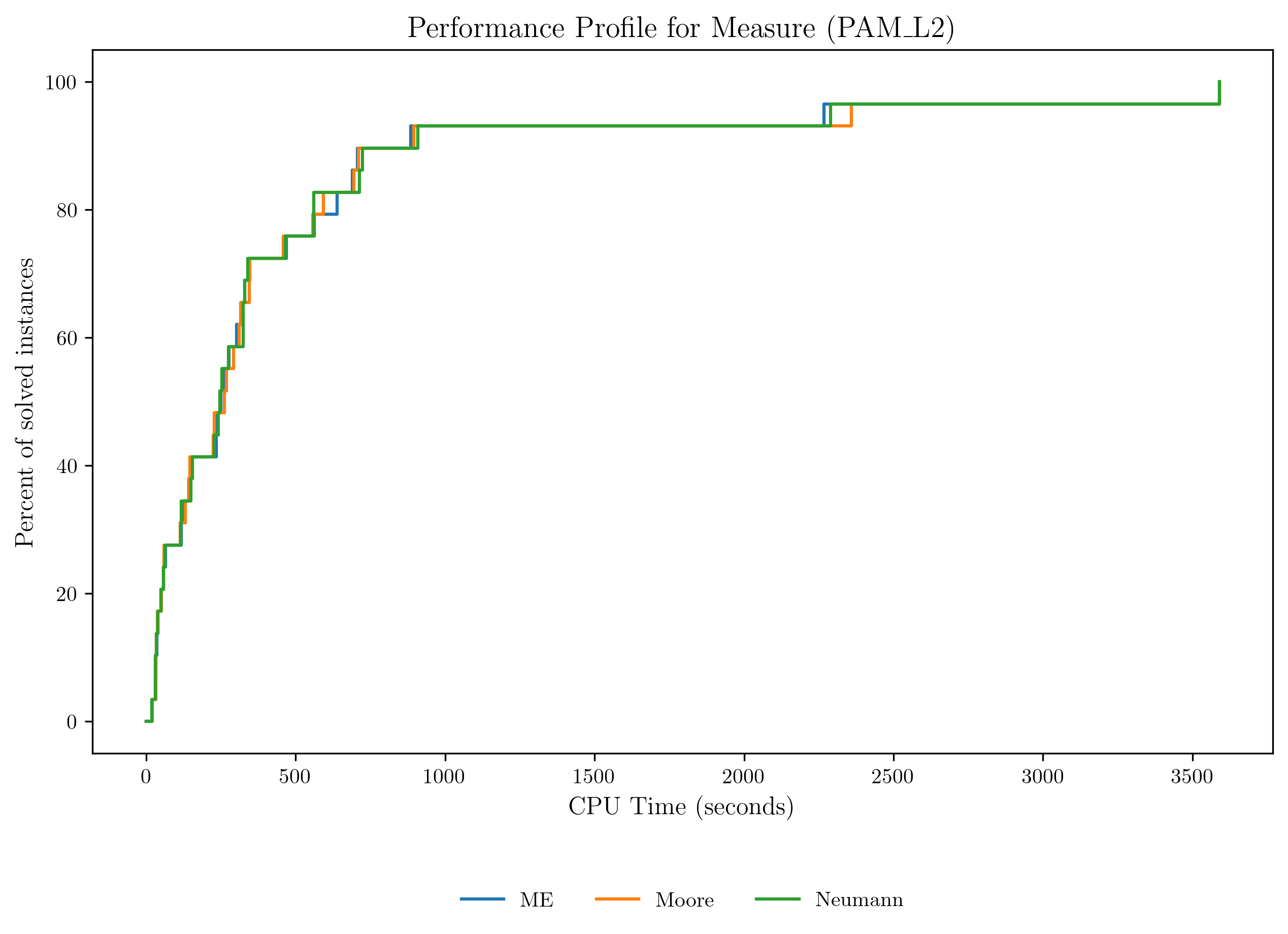}\\
\centerline{\includegraphics[width=0.5\textwidth]{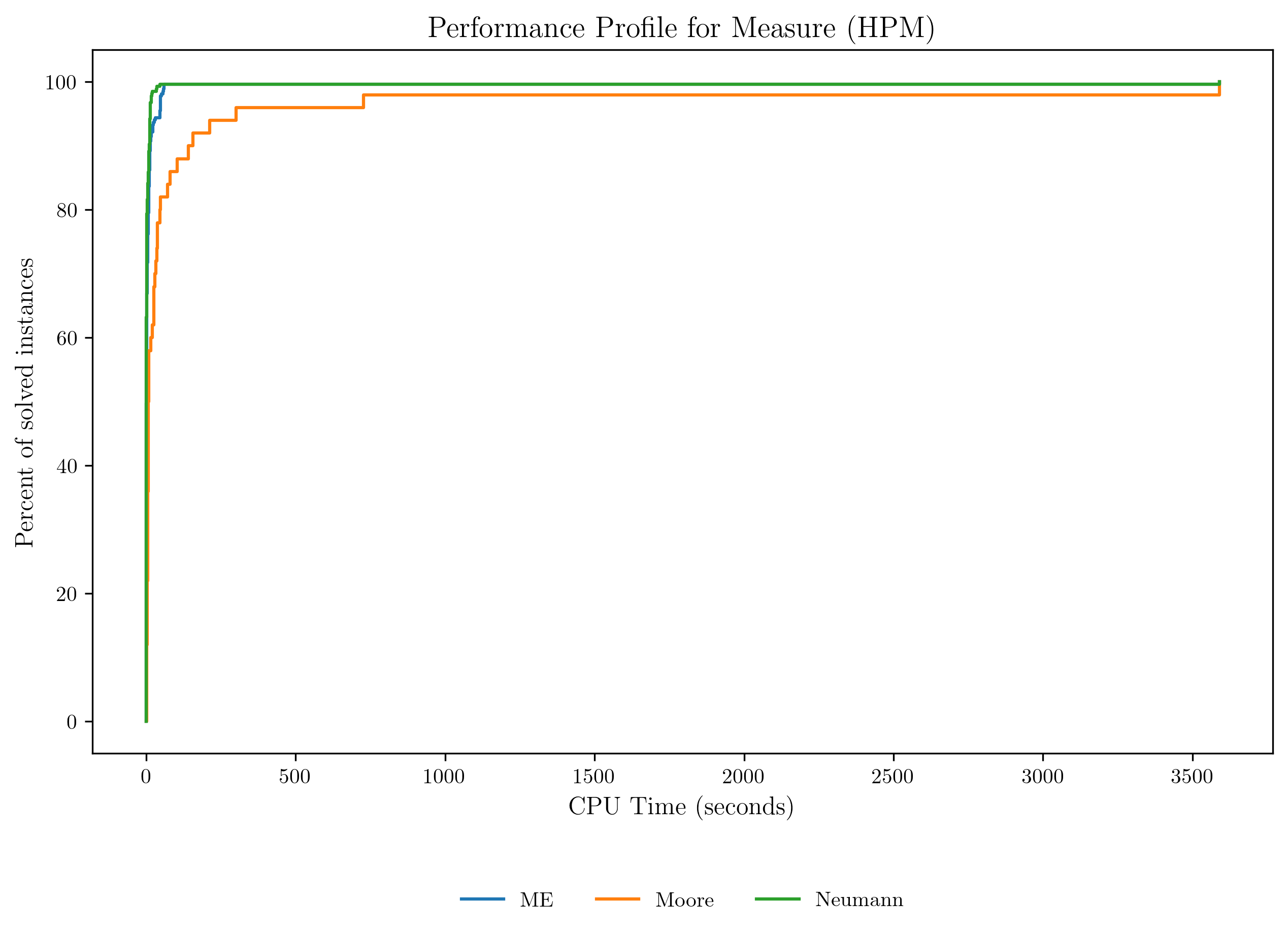}}
\caption{Performance profile of our three types of models for the small instances, for the different measures\label{fig:Time_by_meas}}
\end{figure}

In Figure \ref{fig:unsolved_small} we provide information about the unsolved small instances and their MIPGap.  
In the left we plot a bar chart with the percentage of unsolved instances with each of the models. Note that none of the $n=20$ instances were solved with the position assignment models, being only $20\%$ of the instances with $n=20$ those that were not solved by \texttt{HPM}. The performance of the MIPGAPs (right plot) also show the empirical computational complexity of the different models when solving these instances. The approach \texttt{PAM\_L1} found very large MIPGaps, \texttt{PAM\_L2} gave also large (although smaller) MIPGaps, with a large range of values, but \texttt{HPM} is quite stable with small to medium MIPGaps for the most challenging instances.

\begin{figure}[h]
\includegraphics[width=0.5\textwidth]{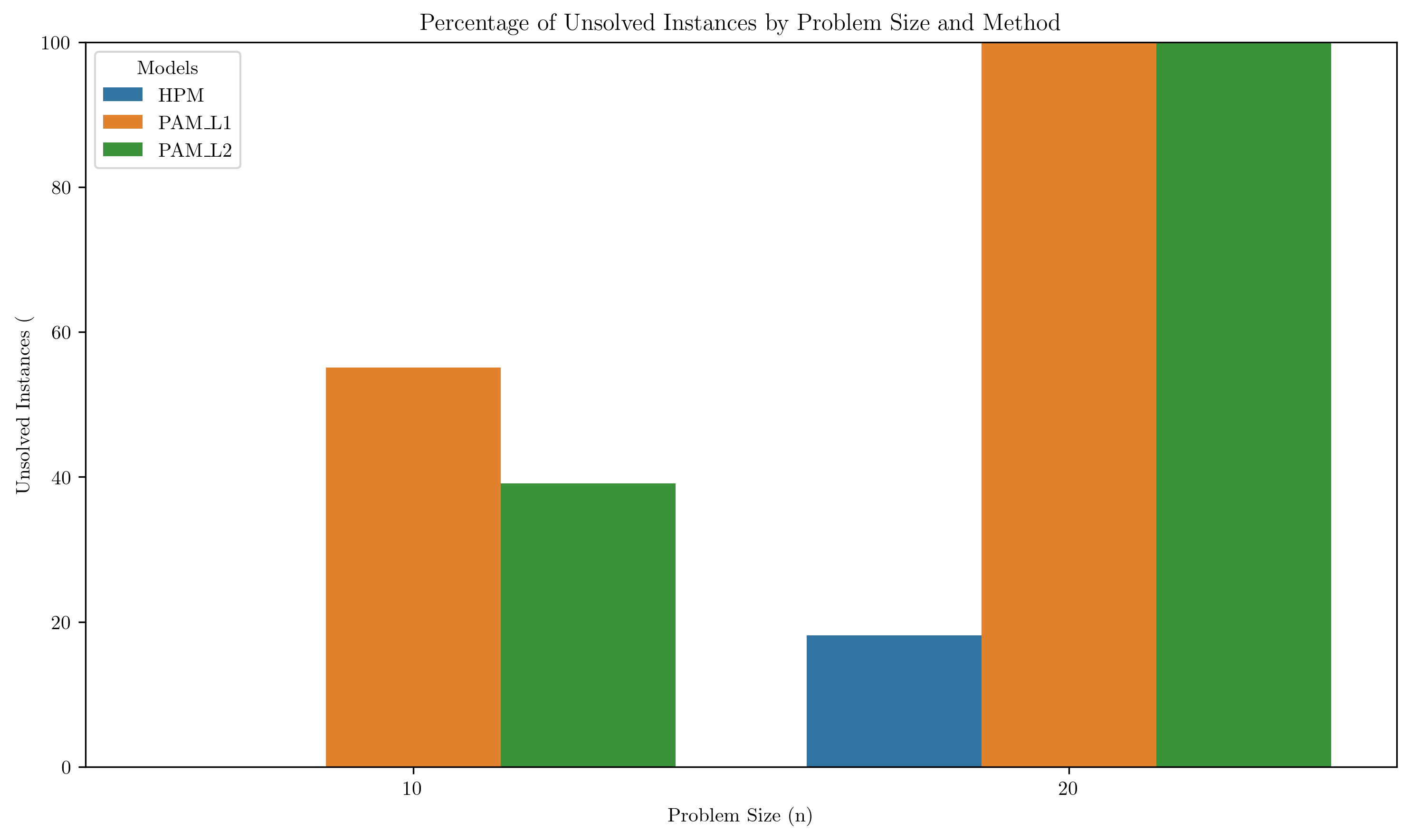}~\includegraphics[width=0.4\textwidth]{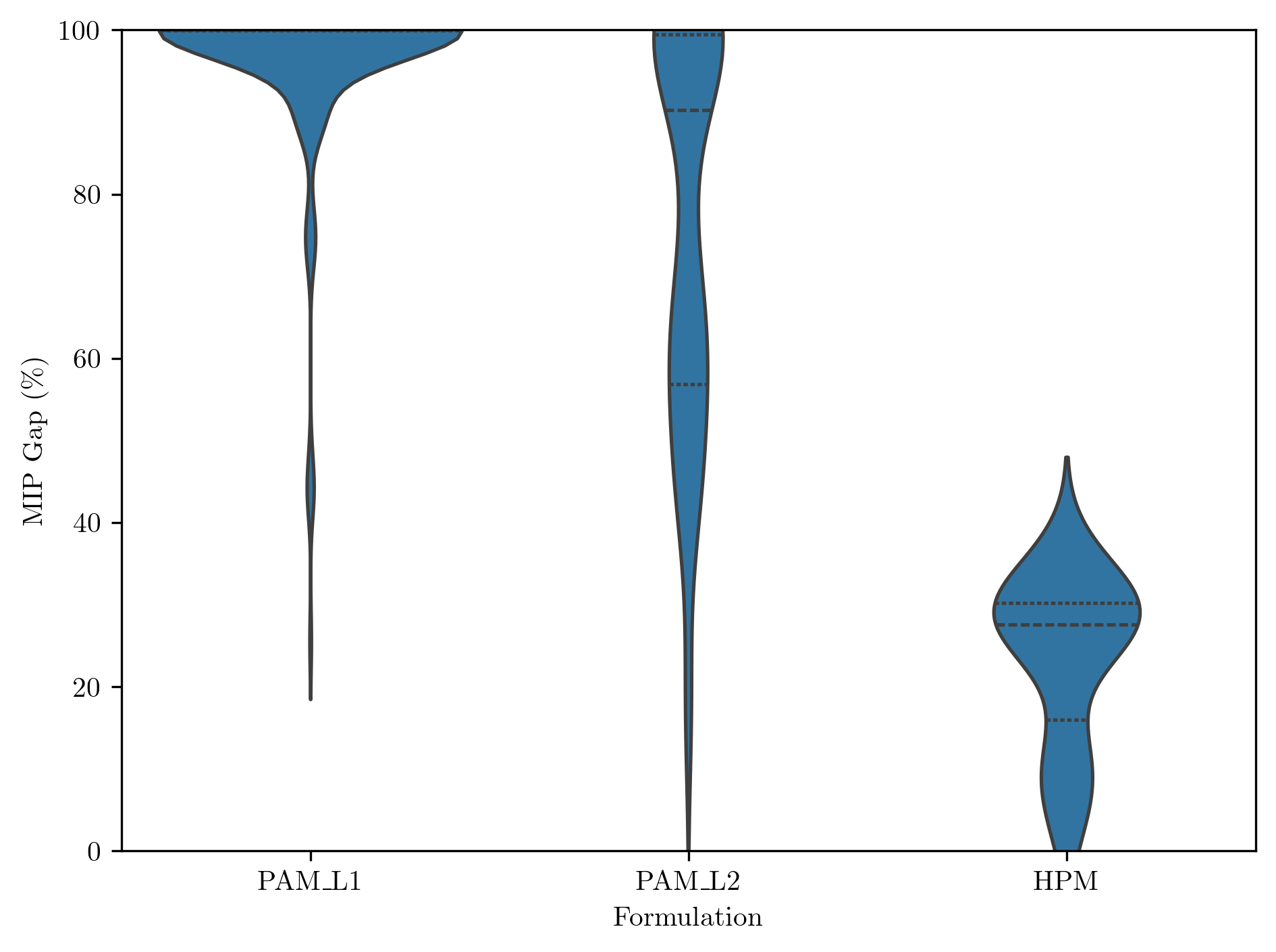}
\caption{Unsolved instances and average MIPGaps for the small instances.\label{fig:unsolved_small}}
\end{figure}

Finally, we report the results obtained for medium and large instances ($n\geq 30$) by the \texttt{HPM} model. 
In Figure \ref{fig:pp_large} we plot the performance profile for these instances for the different types of neighborhoods. 
Clearly, the Moore neighborhood was more time demanding than the other measures, being able to solve less than $10\%$ of the instances, where all the instances with the other measures were optimally solved. The simultaneous or not sorting of the rows and columns, the density of the \texttt{bin} instances, nor the size of the matrices seem to significantly affect the performance.

\begin{figure}[h]
\centering\includegraphics[width=0.5\textwidth]{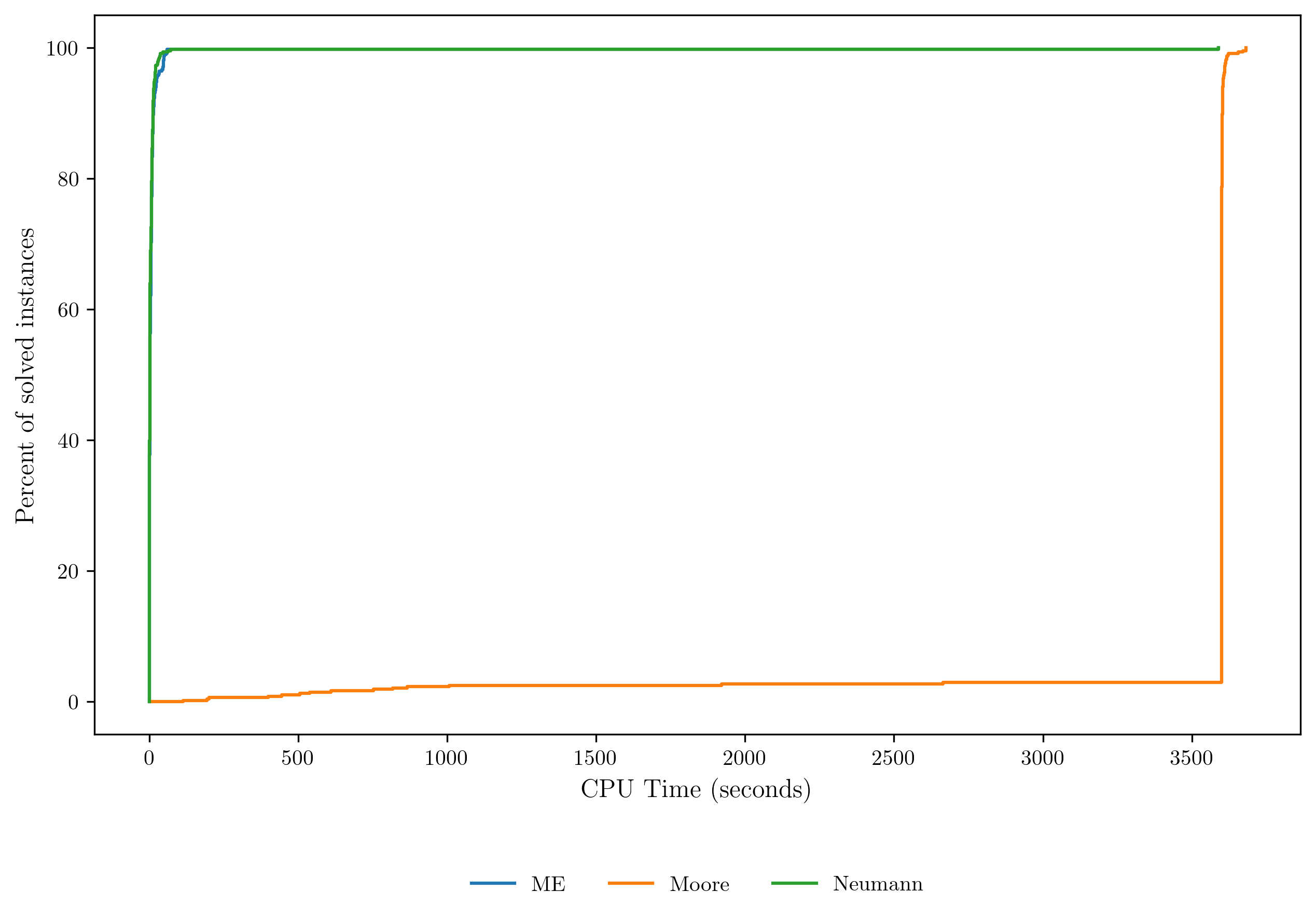}
\caption{Performance profile of \texttt{HPM} for large instances.\label{fig:pp_large}}
\end{figure}

In Figure \ref{fig:MIPGAP_large} we summarize the MIPGaps for the unsolved instances with \texttt{HPM}. As mentioned above, all of them belong to the measure \emph{Moore}, since all the others had MIPGAP=0\%. We distinguished by size $n$ (left) and by type of instances (right). The sizes of the instances seem to affect, although the increase in the MIPGAP is quite small. The type of instances that seem to be more challenging is the \texttt{bin}, being those with density $50\%$ slightly more difficult.
\begin{figure}[h]
\centering\includegraphics[width=0.45\textwidth]{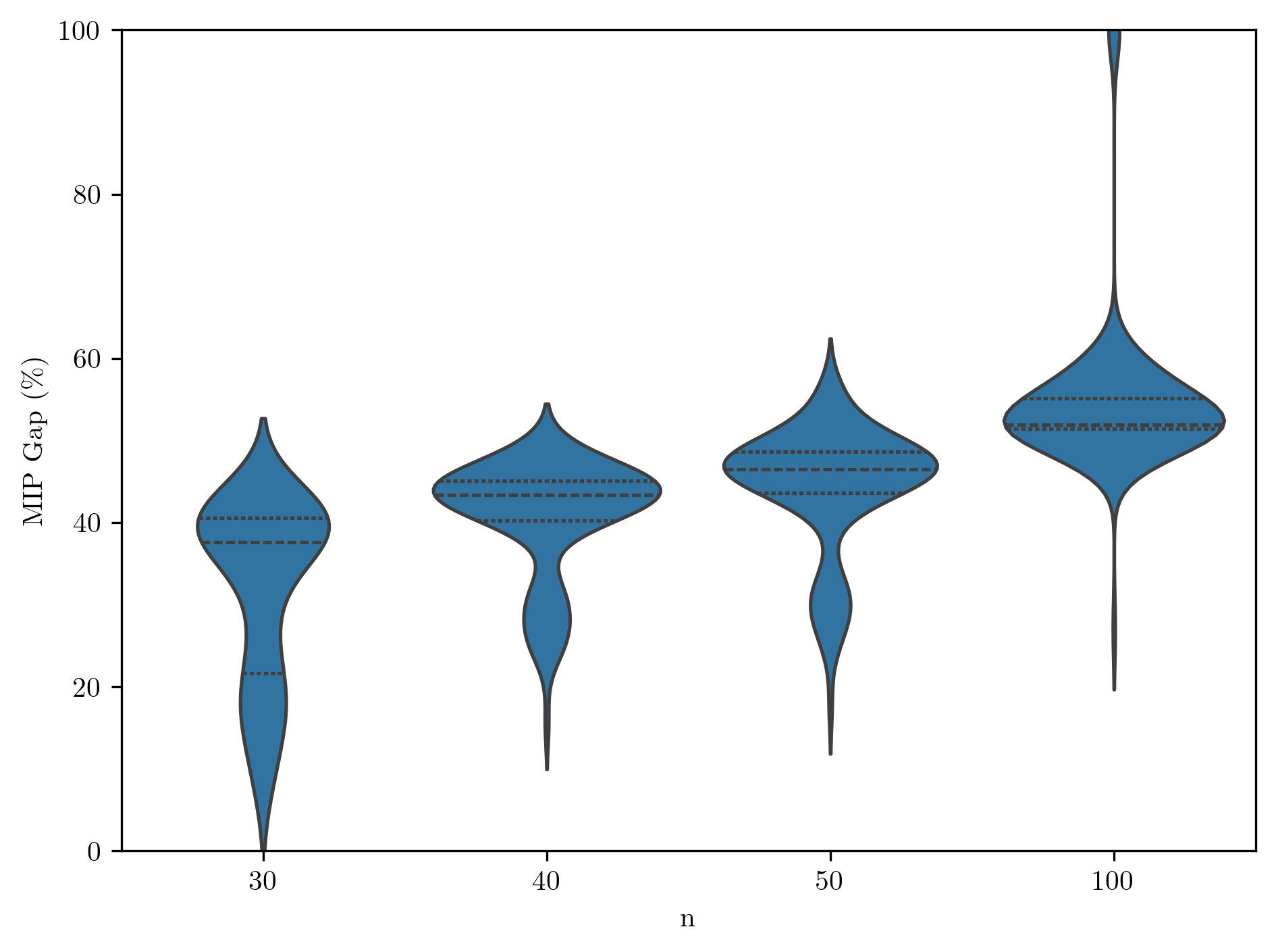}~\includegraphics[width=0.45\textwidth]{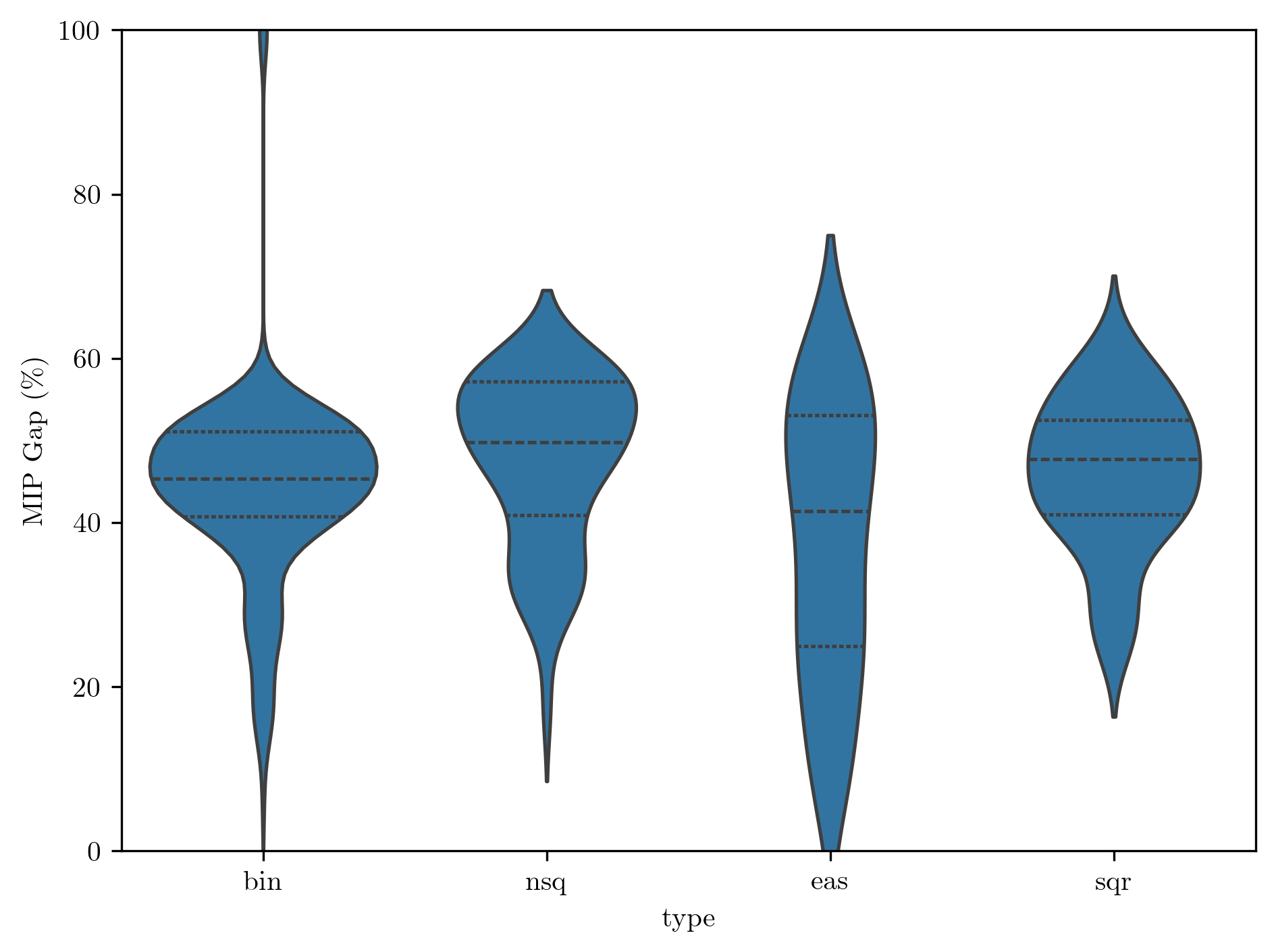}
\caption{MIPGaps of \texttt{HPM} for large instances by size ($n$) and type of instances.\label{fig:MIPGAP_large}}
\end{figure}

In Table \ref{t:tab_devs} we show the main advantage of applying a seriation method to a matrix in terms of the different measures. For each instance, once the optimal seriation is computed, we calculate the relative deviation of the obtained measure of the transformed matrix with respect to that measure on the original matrix. We report the average deviations with respect to the von Neumann (\texttt{Dev\_N}), the Moore (\texttt{Dev\_Mo}), the ME (\texttt{Dev\_ME}) and the homogeneity measure (\texttt{Dev\_Hom}).

\begin{table}[htbp]
\centering
\small
\begin{tabular}{|l|l|l|l|l|l|l|}
\hline
$n$ & $m$ & \texttt{type} & \texttt{Dev\_N} & \texttt{Dev\_Mo} & \texttt{Dev\_ME} & \texttt{Dev\_Hom} \\
\hline
{10} & {10} & \texttt{bin} & 29.11\% & 26.78\% & 26.71\% & 28.20\% \\
 &  & \texttt{eas} & 58.72\% & 50.79\% & 35.47\% & 58.94\% \\
 &  & \texttt{sqr} & 47.04\% & 40.97\% & 18.63\% & 47.73\% \\
\hline
{20} & 10 & \texttt{bin} & 36.55\% & 32.14\% & 31.38\% & 36.05\% \\
 &  & \texttt{nsq} & 51.81\% & 44.61\% & 15.67\% & 51.66\% \\
 & \multirow{3}{*}{20} & \texttt{bin} & 25.75\% & 22.13\% & 21.98\% & 25.35\% \\
 &  & \texttt{eas} & 73.88\% & 69.37\% & 27.48\% & 73.79\% \\
 &  & \texttt{sqr} & 57.50\% & 52.24\% & 16.08\% & 57.49\% \\
\hline
{30} & 10 & \texttt{bin} & 37.65\% & 31.76\% & 30.04\% & 37.38\% \\
 &  & \texttt{nsq} & 57.93\% & 50.64\% & 14.66\% & 57.90\% \\
 & 20 & \texttt{bin} & 34.44\% & 30.17\% & 29.23\% & 33.98\% \\
 &  & \texttt{nsq} & 65.43\% & 60.34\% & 15.02\% & 65.46\% \\
 & {30} & \texttt{bin} & 26.28\% & 22.38\% & 24.41\% & 26.09\% \\
 &  & \texttt{eas} & 81.87\% & 79.33\% & 26.65\% & 81.91\% \\
 &  & \texttt{sqr} & 65.33\% & 61.02\% & 15.62\% & 65.13\% \\
\hline
{40} & 10 & \texttt{bin} & 36.48\% & 30.20\% & 30.64\% & 36.40\% \\
 &  & \texttt{nsq} & 58.25\% & 52.25\% & 12.11\% & 58.14\% \\
 & 20 & \texttt{bin} & 32.00\% & 27.29\% & 27.60\% & 31.57\% \\
 &  & \texttt{nsq} & 66.10\% & 60.46\% & 14.93\% & 66.08\% \\
 & 30 & \texttt{bin} & 31.29\% & 26.67\% & 27.70\% & 31.05\% \\
 &  & \texttt{nsq} & 69.27\% & 64.83\% & 15.05\% & 69.30\% \\
 & {40} & \texttt{bin} & 24.33\% & 20.22\% & 22.90\% & 24.14\% \\
 &  & \texttt{eas} & 87.33\% & 85.22\% & 30.43\% & 87.31\% \\
 &  & \texttt{sqr} & 72.16\% & 67.58\% & 17.11\% & 72.08\% \\
\hline
{50} & 10 & \texttt{bin} & 38.37\% & 31.22\% & 30.60\% & 38.29\% \\
 &  & \texttt{nsq} & 61.40\% & 53.18\% & 14.23\% & 61.21\% \\
 & 20 & \texttt{bin} & 31.64\% & 26.32\% & 27.45\% & 31.34\% \\
 &  & \texttt{nsq} & 69.76\% & 64.87\% & 15.42\% & 69.77\% \\
 & 30 & \texttt{bin} & 29.72\% & 25.29\% & 26.16\% & 29.50\% \\
 &  & \texttt{nsq} & 72.95\% & 68.27\% & 16.15\% & 72.94\% \\
 & 40 & \texttt{bin} & 29.18\% & 24.27\% & 26.30\% & 28.91\% \\
 &  & \texttt{nsq} & 74.36\% & 70.25\% & 15.86\% & 74.34\% \\
 & 50 & \texttt{bin} & 22.55\% & 18.13\% & 21.41\% & 22.35\% \\
 &  & \texttt{eas} & 89.89\% & 88.06\% & 29.90\% & 89.89\% \\
 &  & \texttt{sqr} & 75.48\% & 71.67\% & 15.60\% & 75.44\% \\\hline
{100} & 10 & \texttt{bin} & 38.69\% & 29.89\% & 31.30\% & 39.00\% \\
 &  & \texttt{nsq} & 65.07\% & 56.74\% & 14.75\% & 65.44\% \\
 & 20 & \texttt{bin} & 31.96\% & 24.95\% & 27.53\% & 31.86\% \\
 &  & \texttt{nsq} & 70.49\% & 64.88\% & 14.76\% & 70.50\% \\
 & 30 & \texttt{bin} & 28.35\% & 22.76\% & 25.21\% & 28.29\% \\
 &  & \texttt{nsq} & 73.92\% & 69.15\% & 15.19\% & 73.93\% \\
 & 40 & \texttt{bin} & 27.15\% & 21.55\% & 24.89\% & 27.03\% \\
 &  & \texttt{nsq} & 76.86\% & 72.98\% & 15.88\% & 76.81\% \\
 &  & \texttt{bin} & 24.74\% & 19.41\% & 23.19\% & 24.63\% \\
 &  & \texttt{nsq} & 78.50\% & 74.88\% & 15.62\% & 78.51\% \\
 & \multirow{3}{*}{100} & \texttt{bin} & 16.64\% & 11.76\% & 17.14\% & 16.61\% \\
 &  & \texttt{eas} & 94.18\% & 93.22\% & 29.38\% & 94.19\% \\
 &  & \texttt{sqr} & 83.08\% & 80.00\% & 17.28\% & 83.07\% \\
\hline\multicolumn{3}{|c|}{\bf Total} & 42.49\% & 37.72\% & 23.77\% & 42.33\% \\
\hline
\end{tabular}
\caption{Average improvements of each of the measures comparing the original matrices and the transformed matrix after applying our seriation methods.}
\label{t:tab_devs}
\end{table}

For all these instances, we run the approaches \texttt{BEA} and \texttt{BEA\_TSP} available in the package \texttt{seriation}~\citep{hahsler2008getting} to compute the transformed matrices. As already mentioned, the Bond Energy Algorithm (BEA) is a  seriation method that maximizes the “bond energy”, which encourages similar elements to be placed close to each other. More concretely, the \textit{bond energy} objective for a given permutation of rows $\pi$ is:
$$
\sum_{i\in N} \sum_{k \in M} a_{\pi(i)k}a_{\pi(i+1)k}.
$$
The BEA approach for separately sorting rows and columns consists of alternatively sorting rows and columns until convergence. The greedy approach that is implemented in \texttt{seriation} to find the sorting of the rows/columns starts with a small submatrix, and iteratively inserts the next row/column into the position that maximizes the incremental increase in bond energy, and the process is repeated until all elements are placed.

The \texttt{BEA\_TSP} computes the shortest cycle between rows of the matrix, where the cost between two rows is the distance 
between them. The problem is solved using Concorde in R.

We compared the evaluation of the measures ME,  von Neumann and Moore using our approaches with those obtained with BEA, BEA\_TSP and Heatmap. In all the cases, the measure we obtained was better than or equal to the one obtained with the methods implemented in \texttt{seriation}. Thus, for each of these measures, we compute the deviation of the best solution (for all the methods in R). In Figure \ref{fig:devs} we report the violin plot displaying the distribution of deviation values for three different 
measures with the  when compared to the HPM baseline. As one can observe, the distribution for the von Neumann measure shows a relatively tight and low deviation distribution, with the bulk of the values concentrated below $5\%$. The distribution of the deviations for the Moore measure exhibits the largest spread and highest median deviation, indicating that the methods implemented in \texttt{seriation} consistently deviates more from HPM than the others. The shape also reveals a thicker middle region, suggesting more variability and less precision.
Finally, the deviations for the ME measure seem to have the smallest deviation values and the tightest spreads, with values strongly concentrated around a lower median. This suggests that ME measure aligns most closely with the BEA, BEA\_TSP, and Heatmaps.

\begin{figure}[h]
\centering\includegraphics[width=0.5\textwidth]{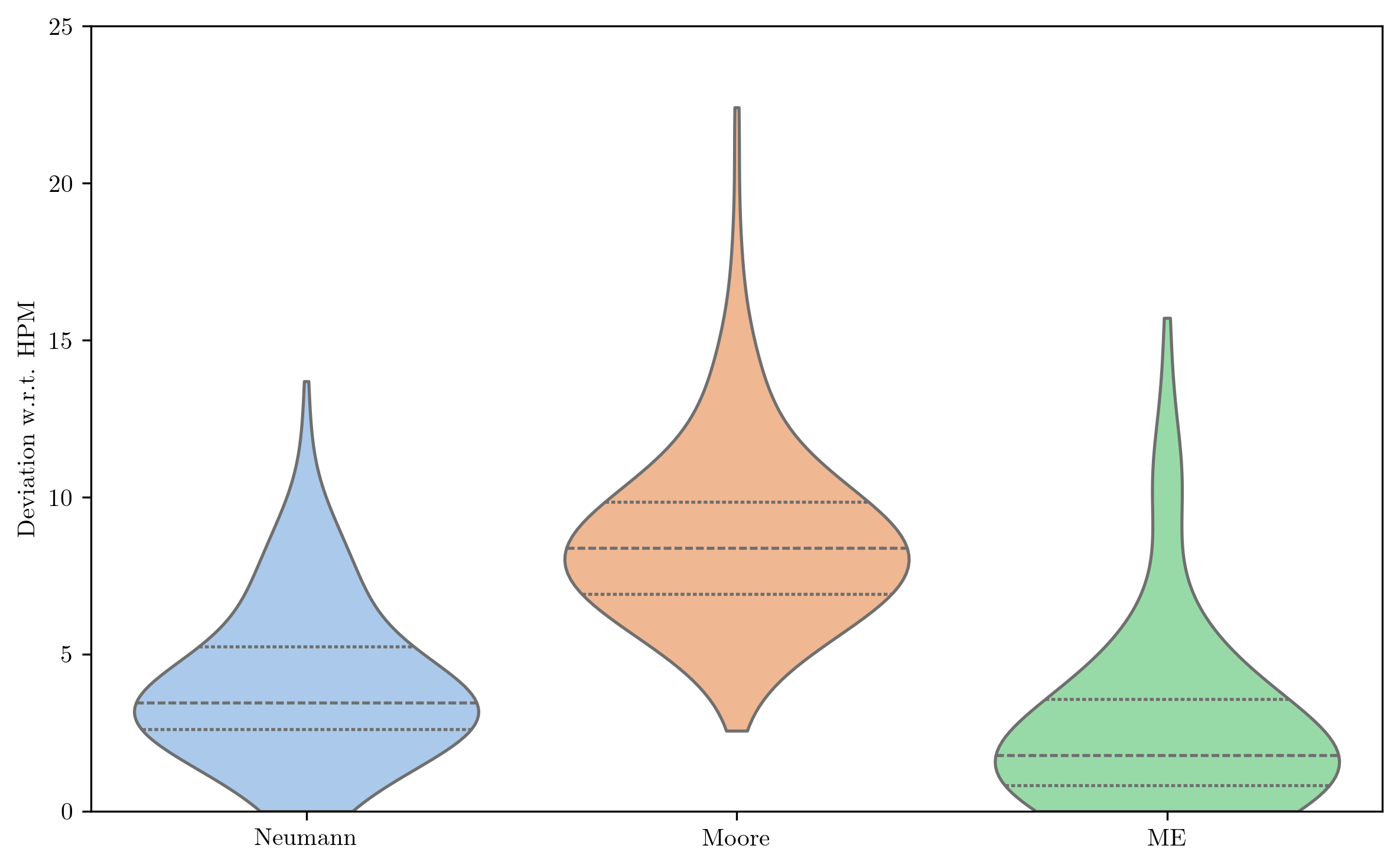}
\caption{Deviations with respect to our approaches\label{fig:devs}}
\end{figure}

\subsection{Real datasets}

\vspace*{0.2cm}

\noindent{\bf Datasets Seriation package}

\vspace*{0.2cm}

We also test our approaches on the datasets provided in the package \texttt{seriation} \citep{hahsler2008getting}, namely \texttt{SupremeCourt}, \texttt{Irish}, \texttt{Munsingen}, \texttt{Townships}, and \texttt{Zoo}, whose descriptions are shown in Table \ref{table_dataR}.

\begin{table}[h!]
{\small\begin{tabular}{p{1.9cm}p{1.5cm}p{9.3cm}}
\texttt{dataset} & Size & Description\\\hline
Irish & (41,8) & Contingency table of responses from Irish voters on political issues.
\\
Munsingen & (59,70) & Archaeological site data from Munsingen (Switzerland), showing the presence/absence or frequencies of grave goods across different graves.\\
Townships & (16,9) & Distance matrix between South African townships based on demographic, social, or economic indicators.\\
Zoo & (101,16) &  Binary dataset describing animal characteristics.\\
Supreme Court & (9,9) &Voting similarity matrix between U.S. Supreme Court justices over a specific period.\\\hline
\end{tabular}}
\caption{Datasets from the \texttt{seriation} package\label{table_dataR}}
\end{table}

For each of the data sets we run the algorithms BEA and BEA-TSP available in the \texttt{seriation} package, as well as our methods \texttt{HPM} for the measures ME (\texttt{HPM-ME}), Moore (\texttt{HPM-Moore}) and von Neumann (\texttt{HPM-VN}). For each of the solutions, we evaluate these three measures and for each data set we compute the deviation of this measure with respect to the best value obtained with all the methods. In Table \ref{tab:deviations} we report the percentage of deviation for all the data sets and all the measures (highlighting in bold face those where the best value was obtained).

\begin{table}[h]
\centering

{\small\begin{tabular}{l l |l l l l l}
dataset & Measure & BEA & BEA\_TSP & \texttt{HPM-ME} & \texttt{HPM-Moore} & \texttt{HPM-VN} \\
\hline
\multirow{3}{*}{Irish} & ME & \textbf{0\%} & \textbf{0\%} & \textbf{0\%} & 3\% & 2\% \\
 & Von Neumann & 50\% & 50\% & 50\% & 1\% & \textbf{0\%} \\
& Moore & 50\% & 50\% & 50\% & \textbf{0\%} & \textbf{0\%} \\
\hline
\multirow{3}{*}{Munsingen} & ME & \textbf{0\%} & \textbf{0\%} & \textbf{0\%}  & \textbf{0\%} & 20\%\\
 & Von Neumann & 10\% & 10\% & \textbf{0\%} & 20\% & \textbf{0\%} \\
 & Moore & 10\% & 10\% & \textbf{0\%} & 20\% & \textbf{0\%} \\
 \hline
\multirow{3}{*}{SupremeCourt} & ME & \textbf{0\%} & \textbf{0\%} & \textbf{0\%} & 5\% & 5\% \\
 & Von Neumann & 21\% & 21\% & 26\% & \textbf{0\%} & \textbf{0\%} \\
 & Moore & 24\% & 22\% & 25\% & \textbf{0\%} & 1\% \\
 \hline
\multirow{3}{*}{Townships} & ME & 10\% & \textbf{0\%} & \textbf{0\%} & \textbf{0\%} & \textbf{0\%} \\
 & Von Neumann & 30\% & 20\% & 20\% & \textbf{0\%} & \textbf{0\%} \\
 & Moore & 30\% & 30\% & 30\% & \textbf{0\%} & \textbf{0\%} \\
 \hline
\multirow{3}{*}{Zoo} & ME & \textbf{0\%} & \textbf{0\%} & \textbf{0\%} & 40\% & 10\% \\
 & Von Neumann & 30\% & 30\% & 30\% & 60\% & \textbf{0\%} \\
 & Moore & 30\% & 30\% & 30\% & \textbf{0\%} & 60\%  \\
\hline
\end{tabular}}
\caption{Deviations with respect to all the measures with the different methods\label{tab:deviations}}
\end{table}

One can observe that \texttt{HPM} always obtain the best value with respect to the measure that is being optimized. 
In some cases, the BEA-based methods obtained a deviation of $50\%$ with respect to this best value. The BEA method is 
a heuristic designed for ME. We found that for the Township dataset, this heuristic solution deviated $10\%$ from the optimal solution obtained with \texttt{HPM-ME}. A similar situation happened with BEA-TSP with the von Neumann measure, since the method behind the TSP solution in this package is not exact. 

Summarizing these results, our proposals are exact approaches that result in more accurate seriations when applied to 
the best known goodness measures under this framework.

\vspace*{0.2cm}

\noindent{\bf Scientific Coauthorship Datasets}

\vspace*{0.2cm}

We extracted 77 scientific publications from the years 1966 to 2024 from the database \url{openalex.com}, in which the phrase \textit{matrix seriation} appears in the title or abstract. Based on this corpus, we constructed a co-authorship matrix where each entry quantifies the number of joint publications between pairs of authors. To explore varying collaboration densities, we filtered the dataset to retain only authors with at least 1, 2, 3, or 5 coauthored publications, resulting in square matrices of sizes $92$, $53$, $40$, and $14$, respectively. These datasets are available in our Github repository \url{github/vblancOR/seriation_mathopt}.

For visualization, we normalized all matrix entries to lie in the interval $[0,1]$, and set all diagonal entries to 1 to represent full self-collaboration.

Figure~\ref{fig:all_originals} presents the original co-authorship matrices for the four scenarios ($n \in \{14, 40, 53, 92\}$), with authors sorted alphabetically. These matrices are shown as grayscale images (black = 1, white = 0). As can be seen, no clear structural patterns emerge from these unordered matrices.

\begin{figure}[h!]
\includegraphics[width=0.5\textwidth]{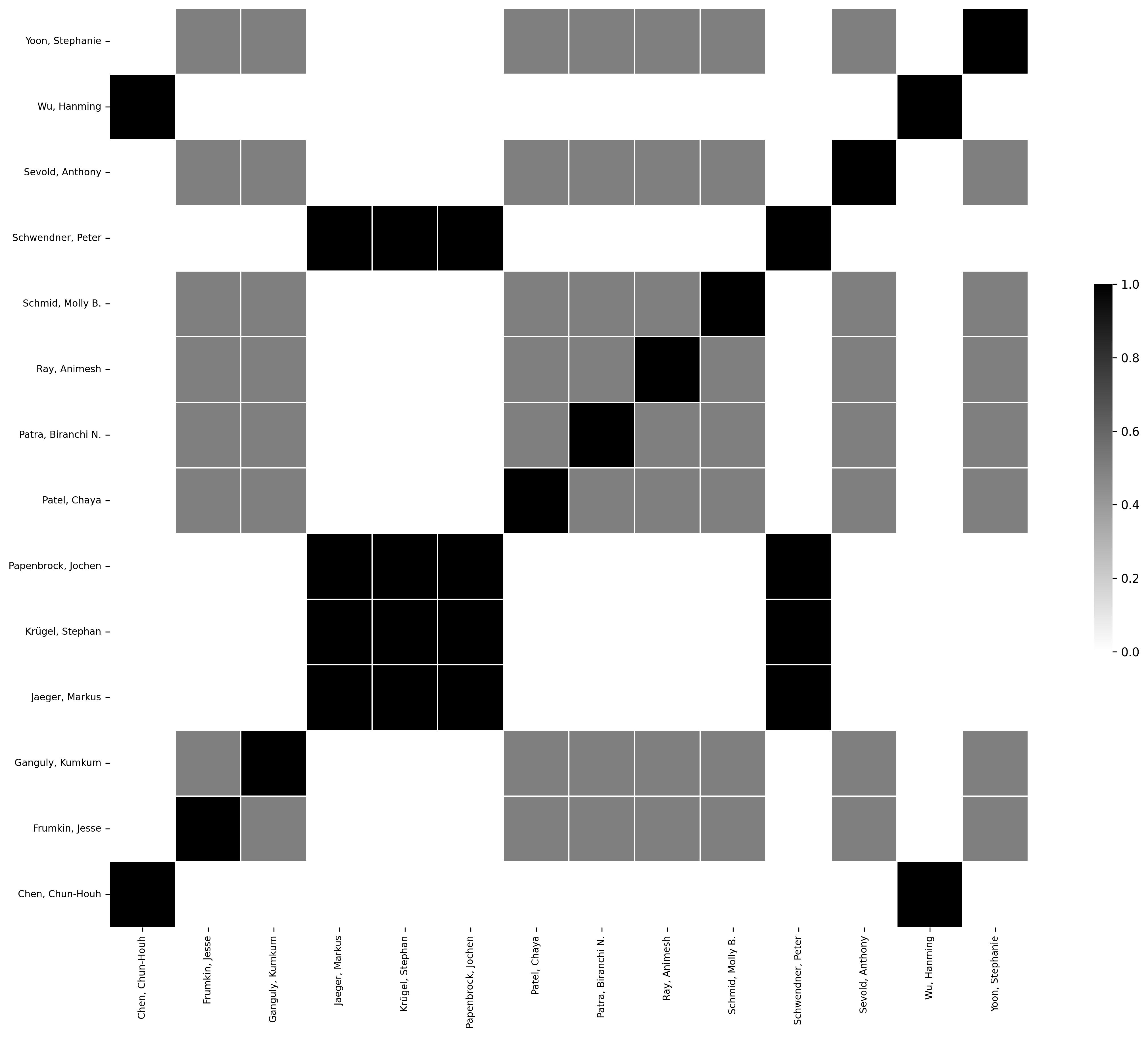}~\includegraphics[width=0.5\textwidth]{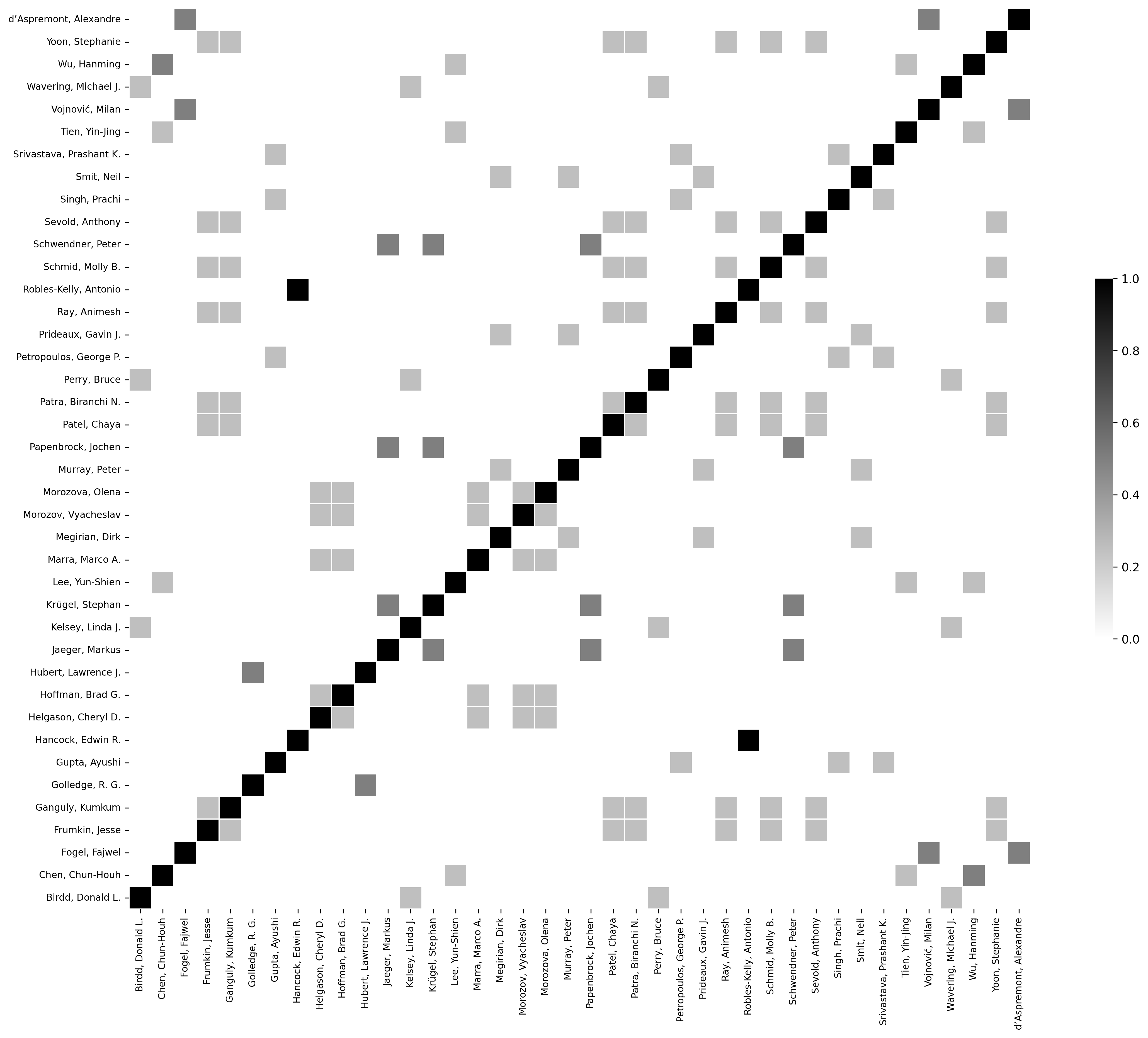} \\
\includegraphics[width=0.5\textwidth]{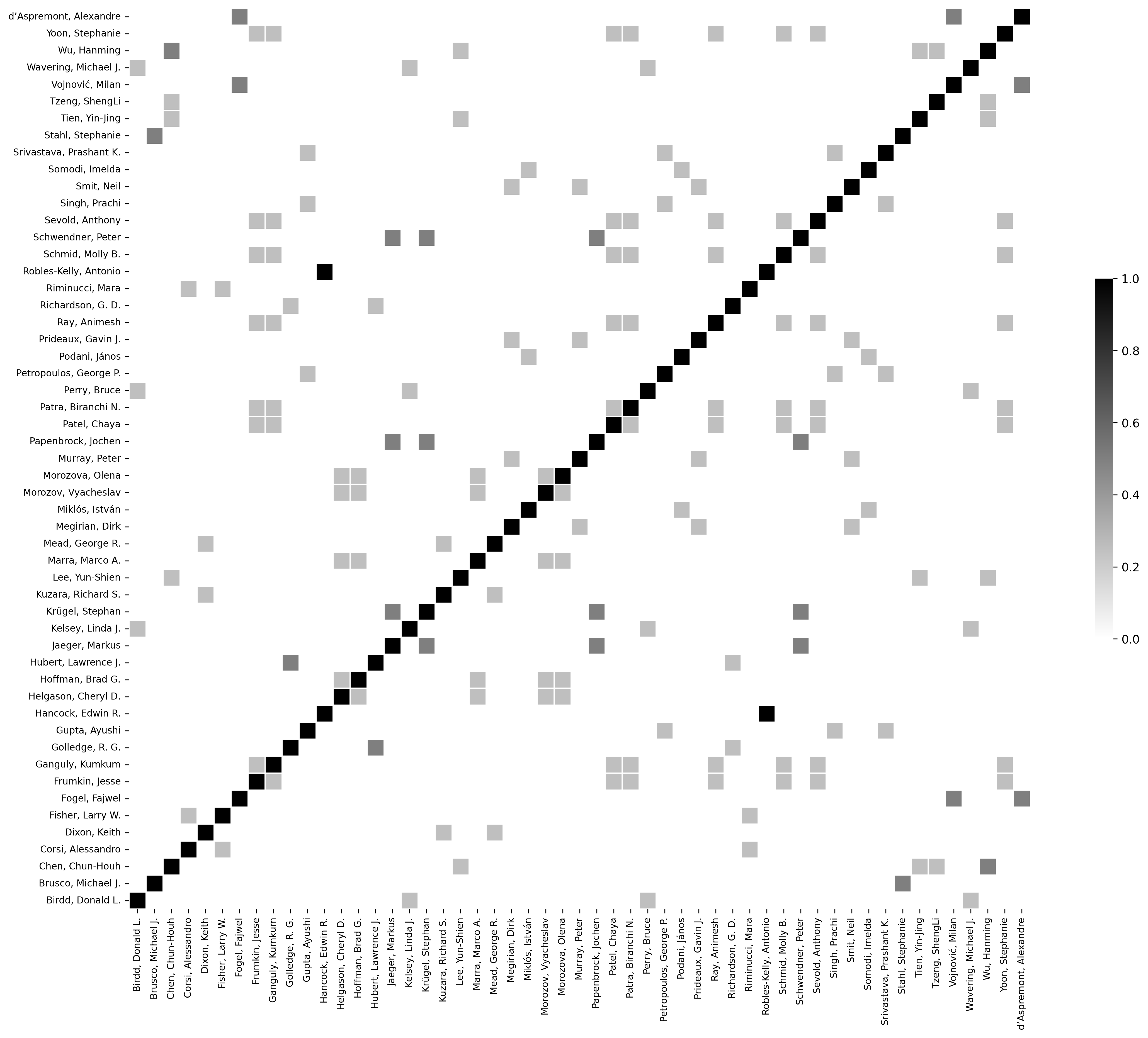}~\includegraphics[width=0.5\textwidth]{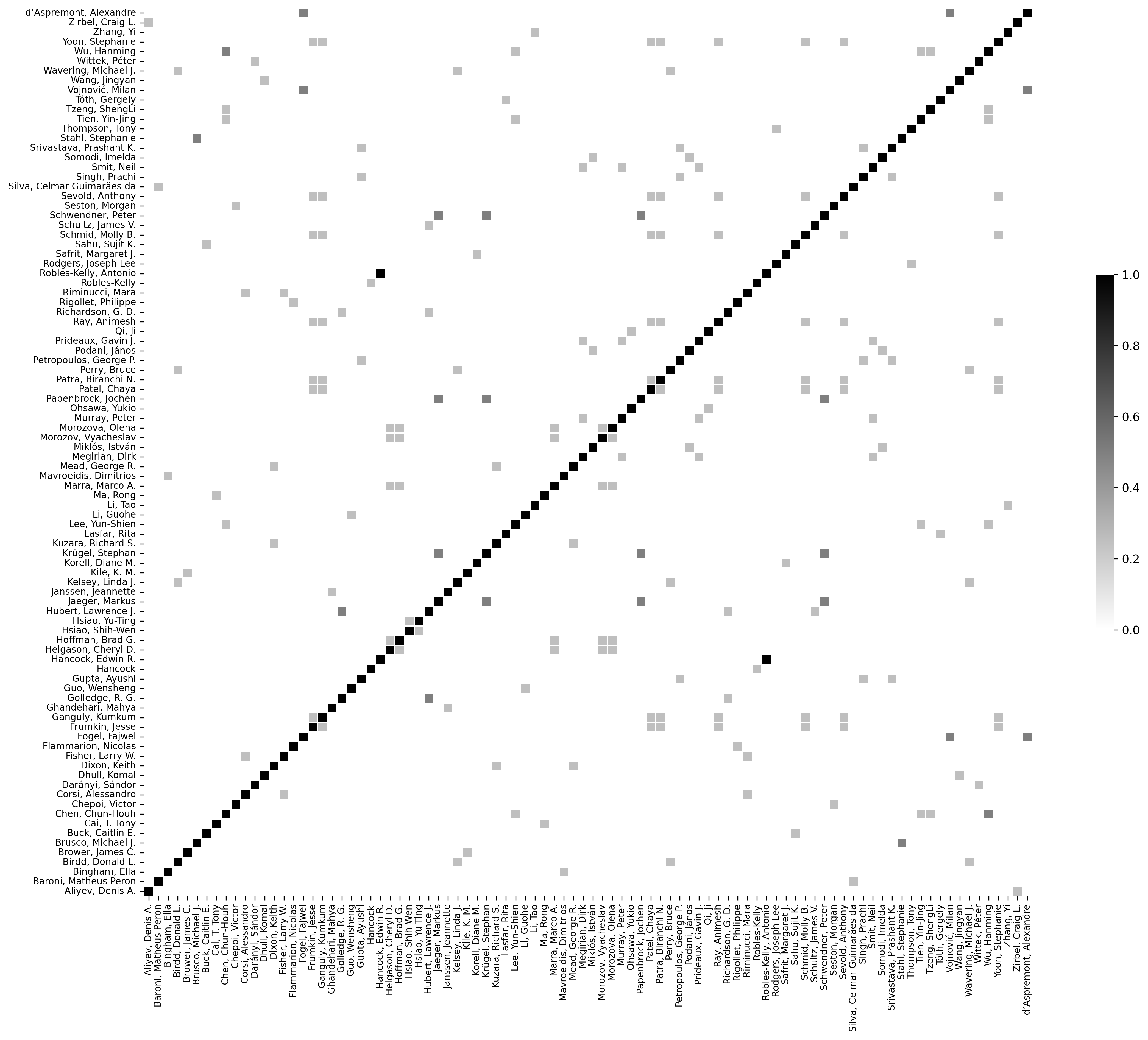}
\caption{Original information for the for scenarios in our case study ($n \in \{14, 40, 53, 92\}$) \label{fig:all_originals}}
\end{figure}

To uncover latent structure, we applied our HPM method, under both coordinated  and uncoordinated settings, using three seriation criteria: \textbf{Moore stress}, \textbf{von Neumann stress}, and the \textbf{Measure of Efficiency (ME)}.

The resulting visualizations are displayed in Figures~\ref{fig:seriation14_sep}--\ref{fig:seriation92_joint}.

For the smallest matrix ($14 \times 14$), coordination leads to nearly identical results across the three measures, differing only in the internal ordering within the three clearly separated clusters (Figure~\ref{fig:seriation14_joint}). These clusters are reported in Table~\ref{table:clusters14}.

Without coordination (Figure~\ref{fig:seriation14_sep}), the same three-cluster structure is evident, though with variation in the intra-cluster ordering.

\begin{figure}[h!]
\includegraphics[width=0.5\textwidth]{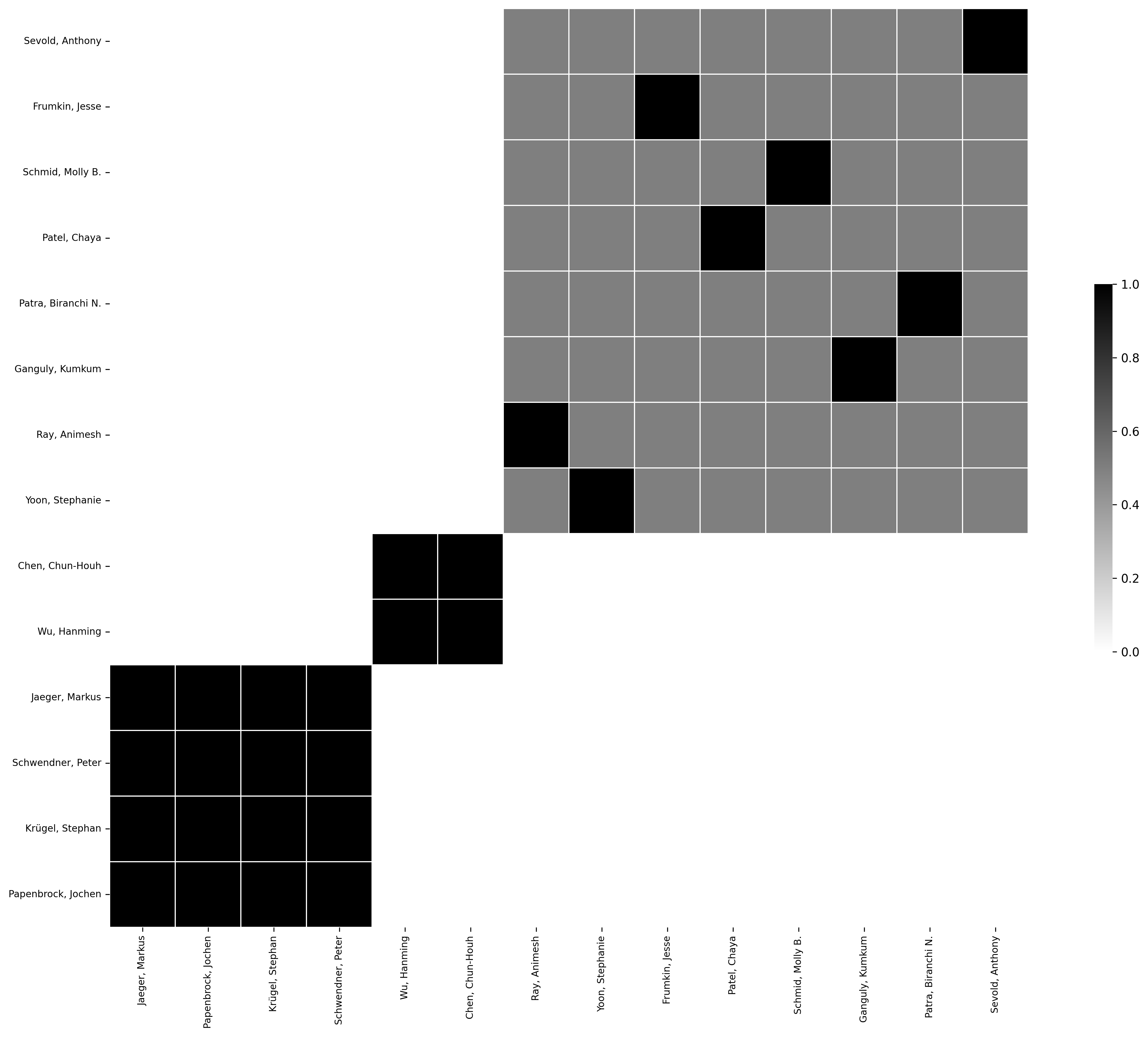}~\includegraphics[width=0.5\textwidth]{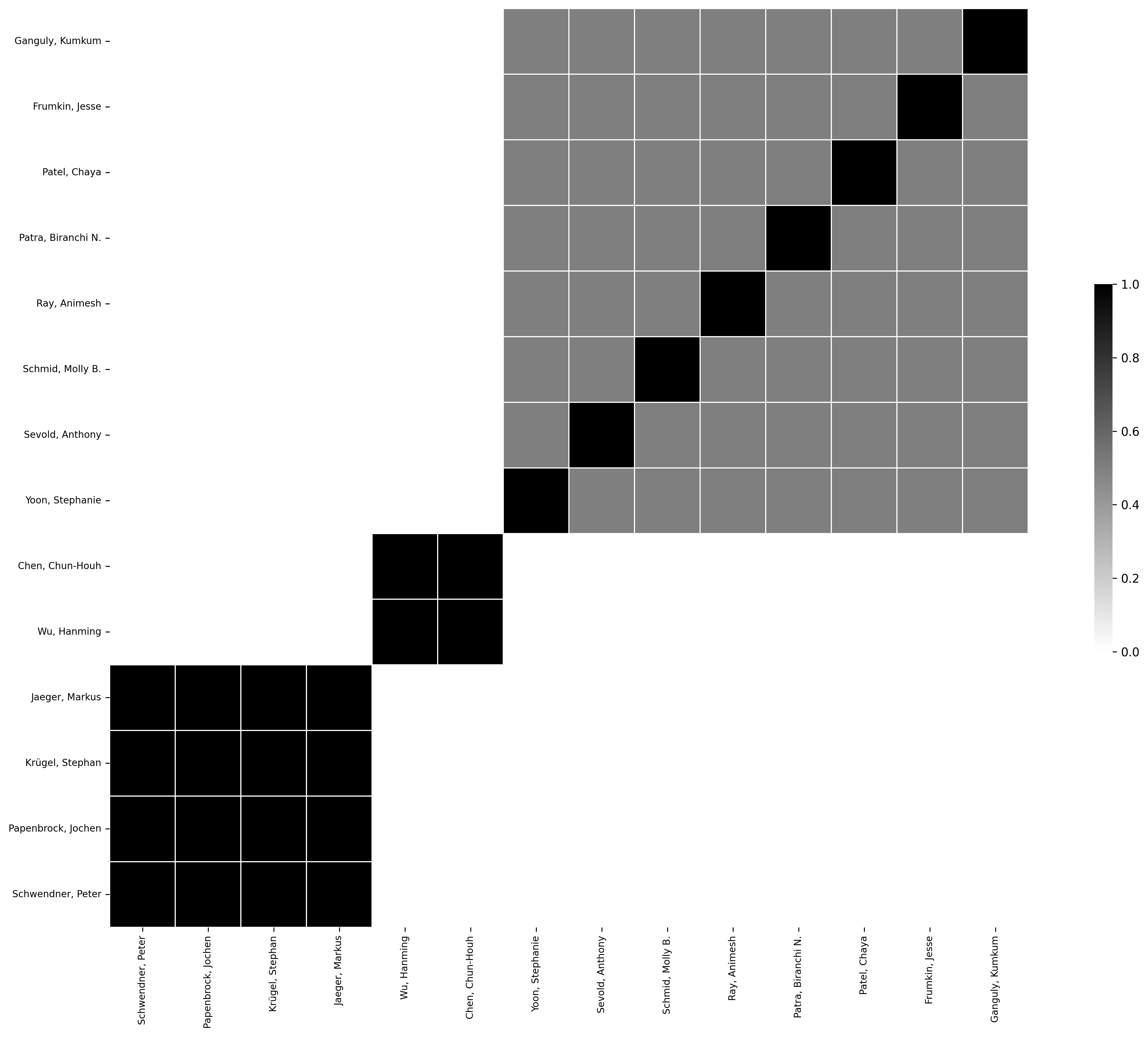}\\
\centerline{\includegraphics[width=0.5\textwidth]{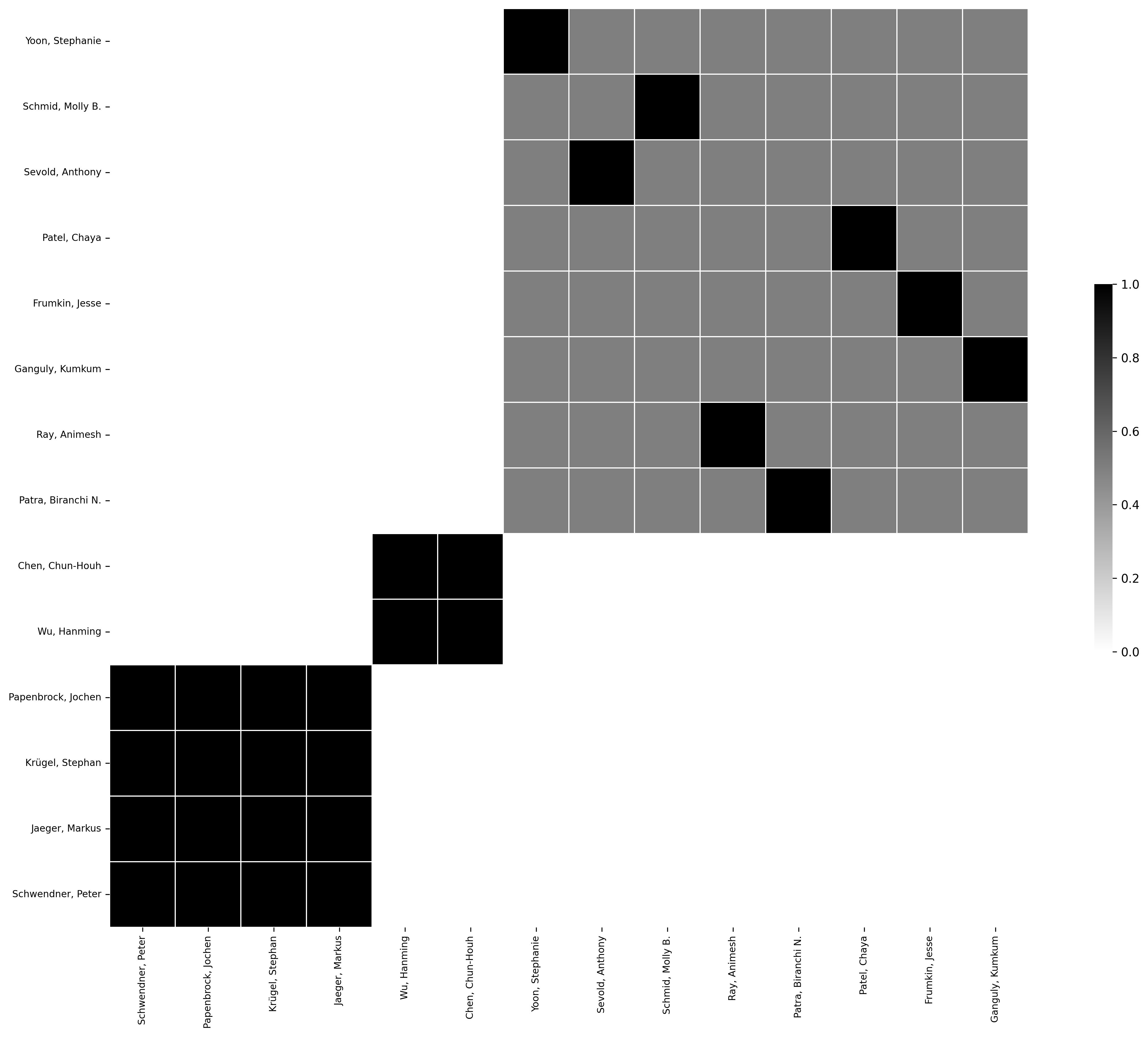}}
\caption{Representation of the optimal solutions for the $14$-seriation authorship instance with non coordinated rows/columns under von Neumann stress measure (left), Moore stress measure (right), and ME measure (down)\label{fig:seriation14_sep}}
\end{figure}

\begin{figure}[h!]
\includegraphics[width=0.33\textwidth]{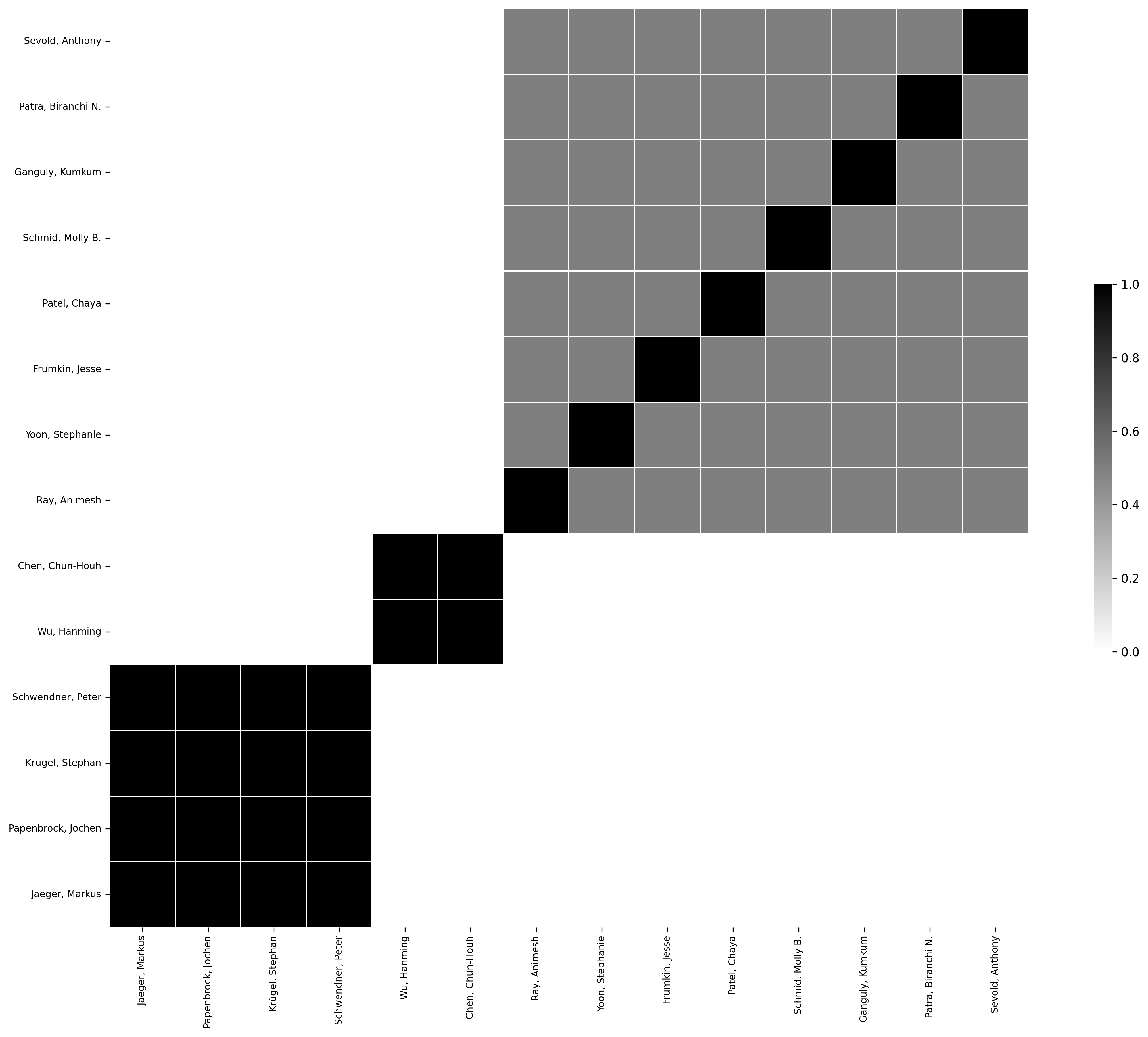}~\includegraphics[width=0.33\textwidth]{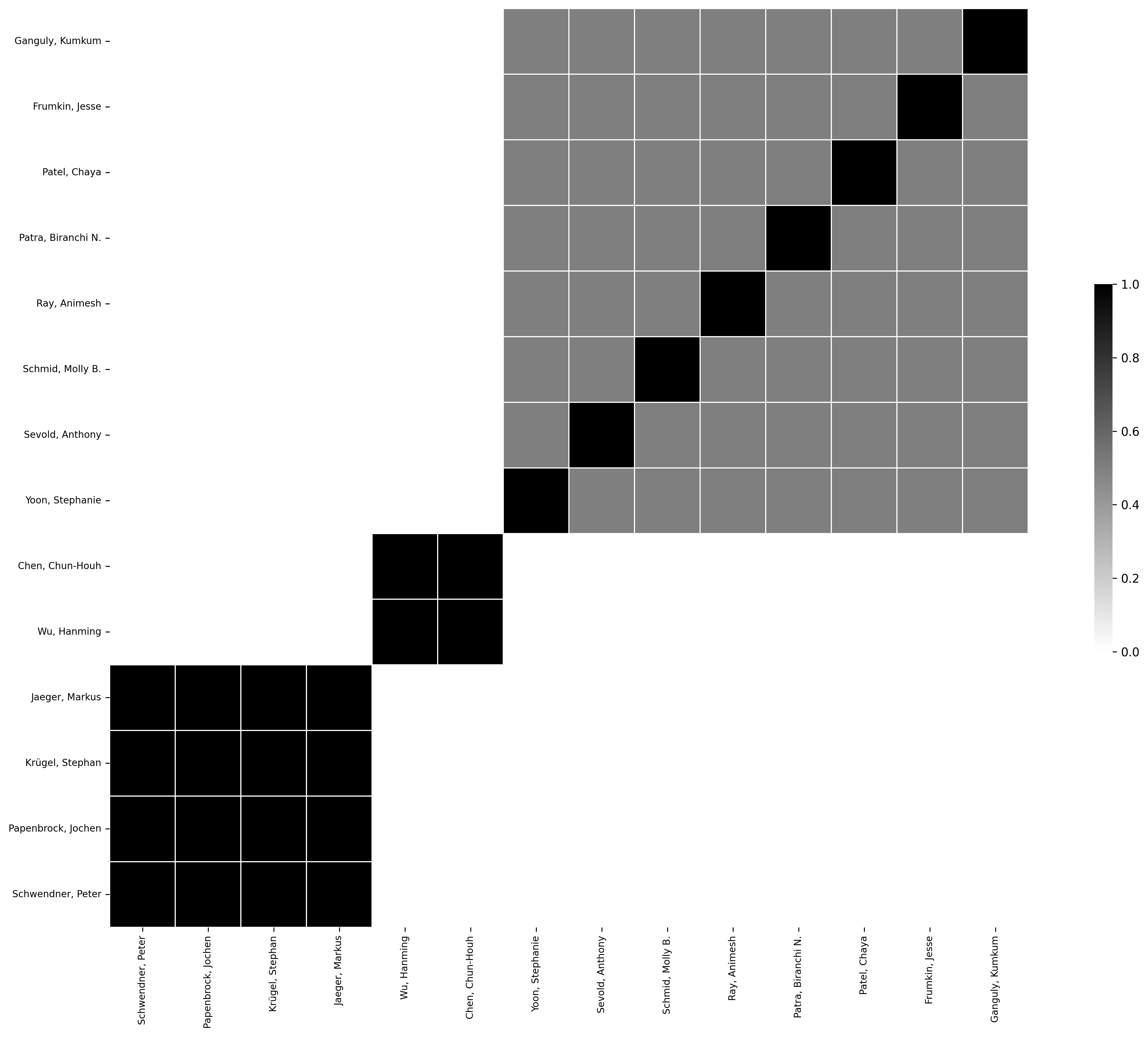}~{\includegraphics[width=0.33\textwidth]{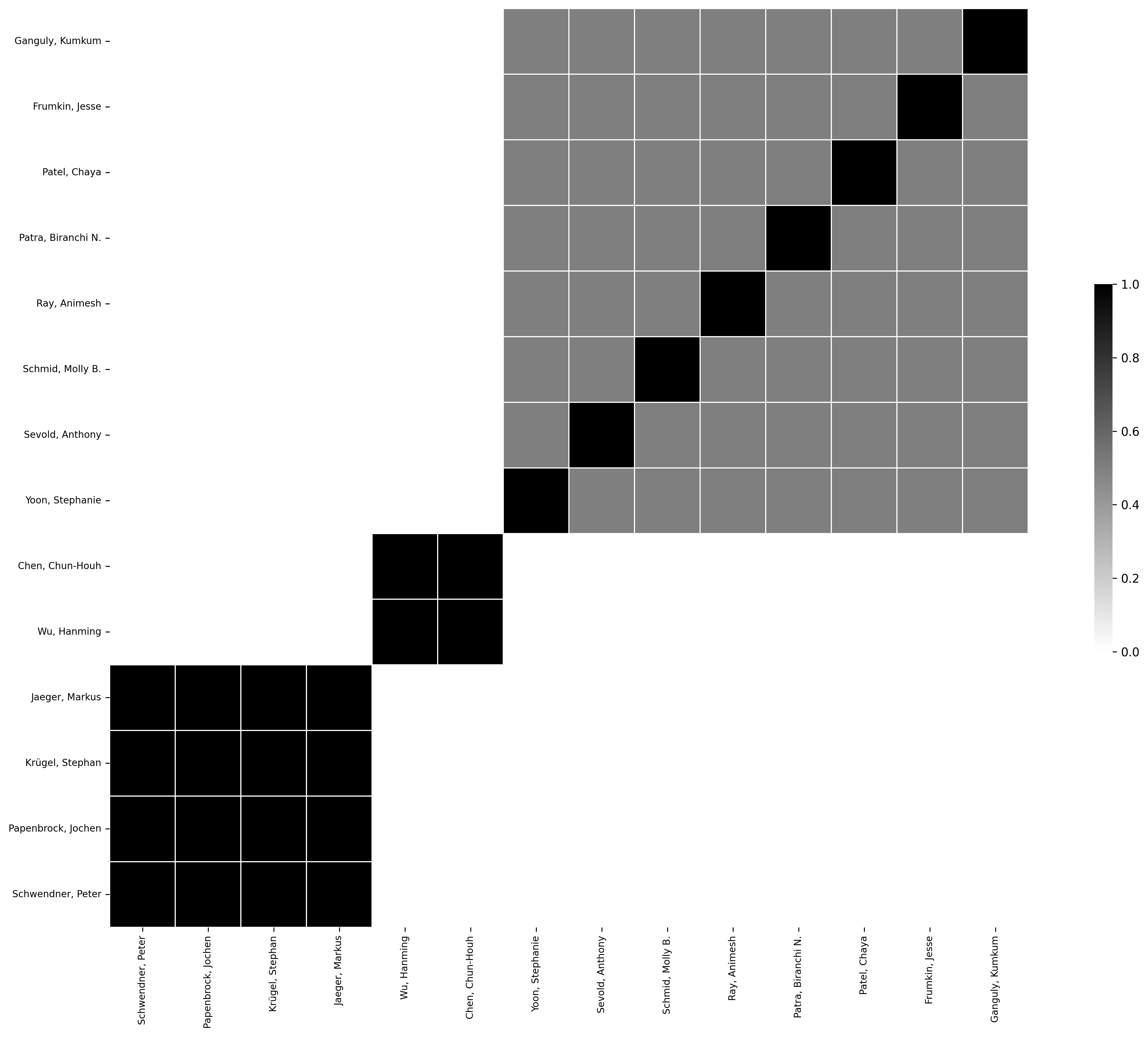}}
\caption{Representation of the optimal solutions for the $14$-seriation authorship instance with coordinated rows/columns under von Neumann stress measure (left), Moore stress measure (right) and ME measure (down)\label{fig:seriation14_joint}}
\end{figure}

For the 40-author dataset, coordinated solutions (Figure~\ref{fig:seriation40_joint}) reveal 10 distinct blocks of non-interacting author groups, including four weakly connected sub-clusters. The identified clusters are listed in Table~\ref{table:clusters40}.

\begin{figure}[h]
\includegraphics[width=0.33\textwidth]{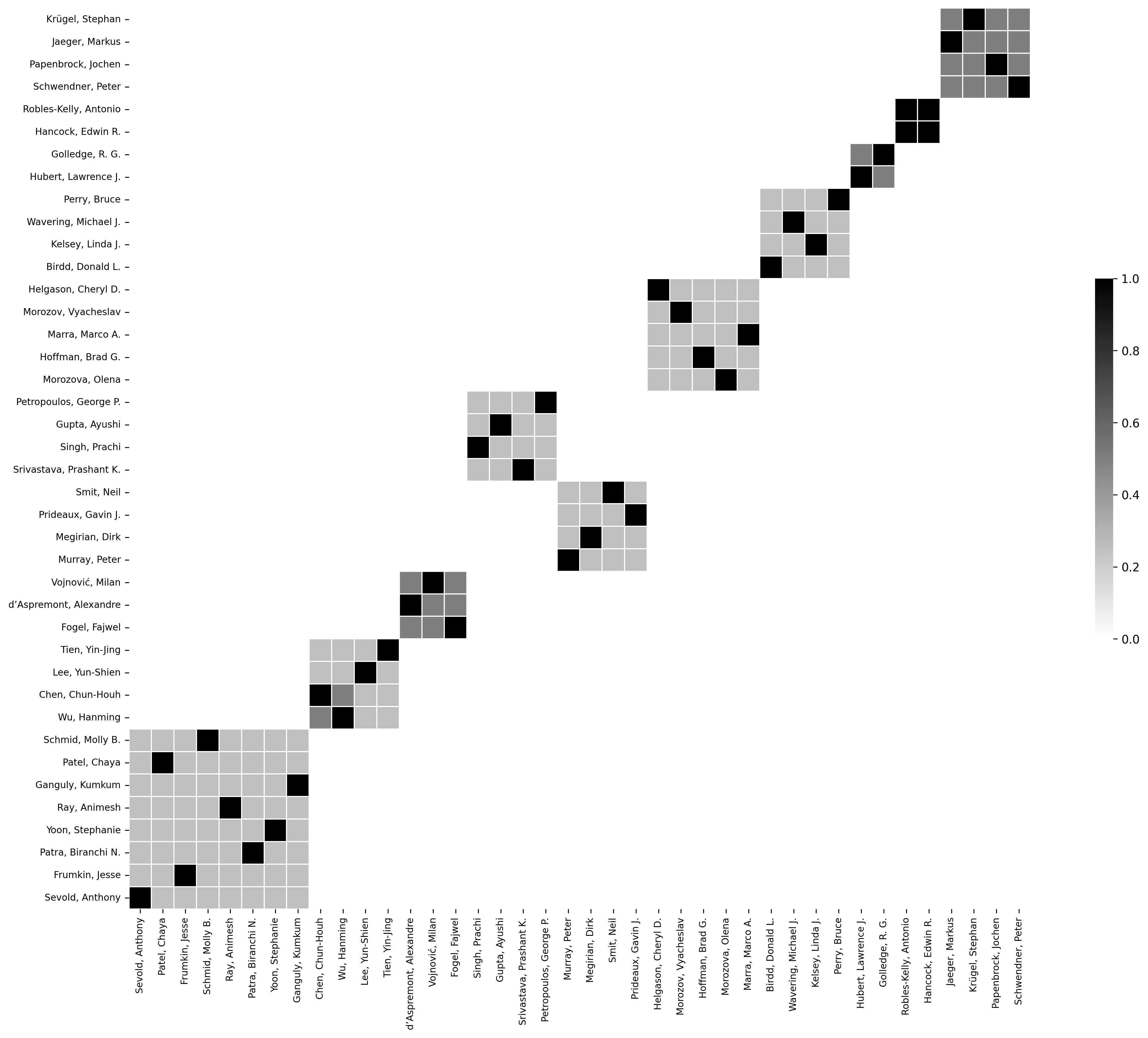}~\includegraphics[width=0.33\textwidth]{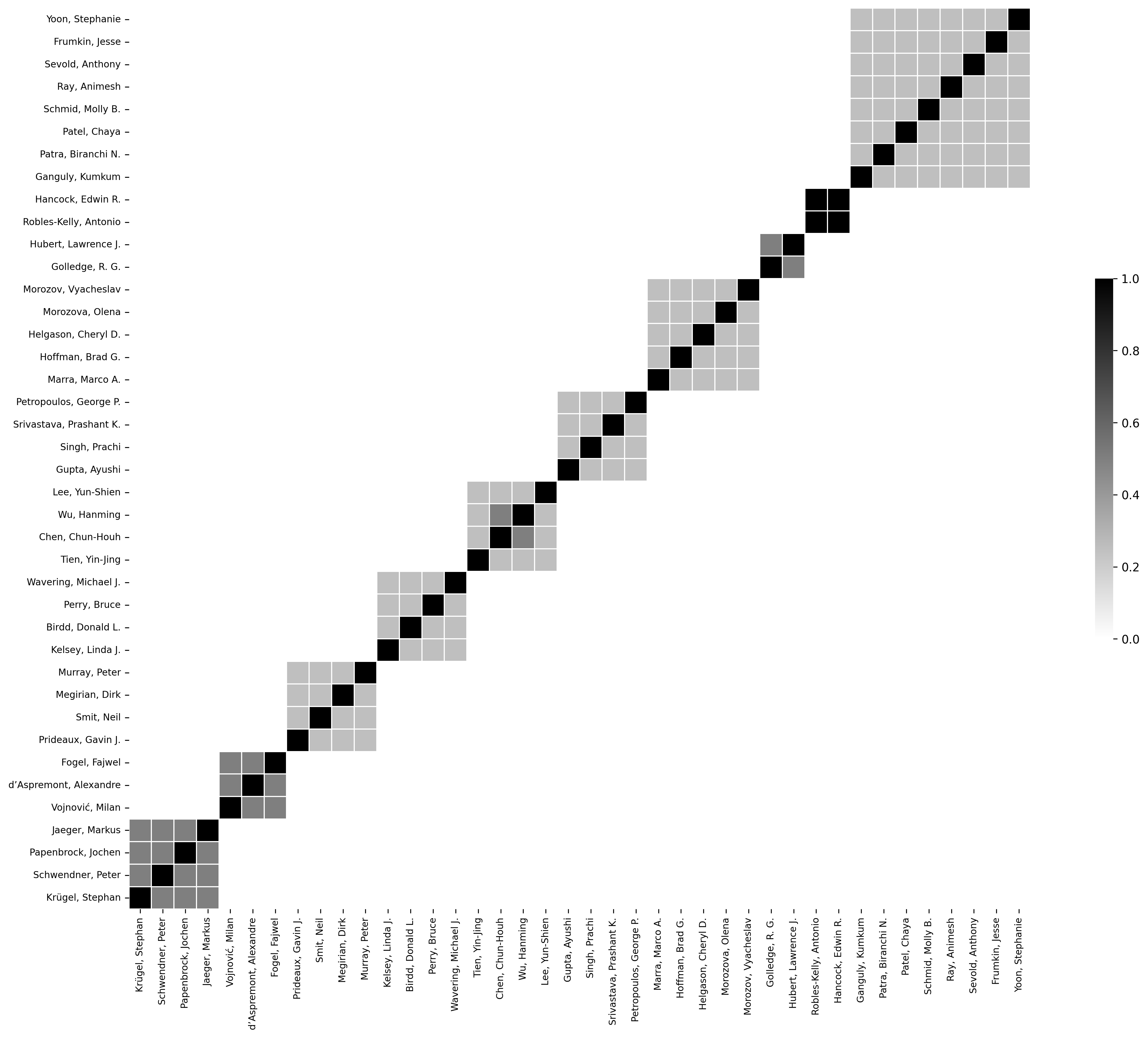}~{\includegraphics[width=0.33\textwidth]{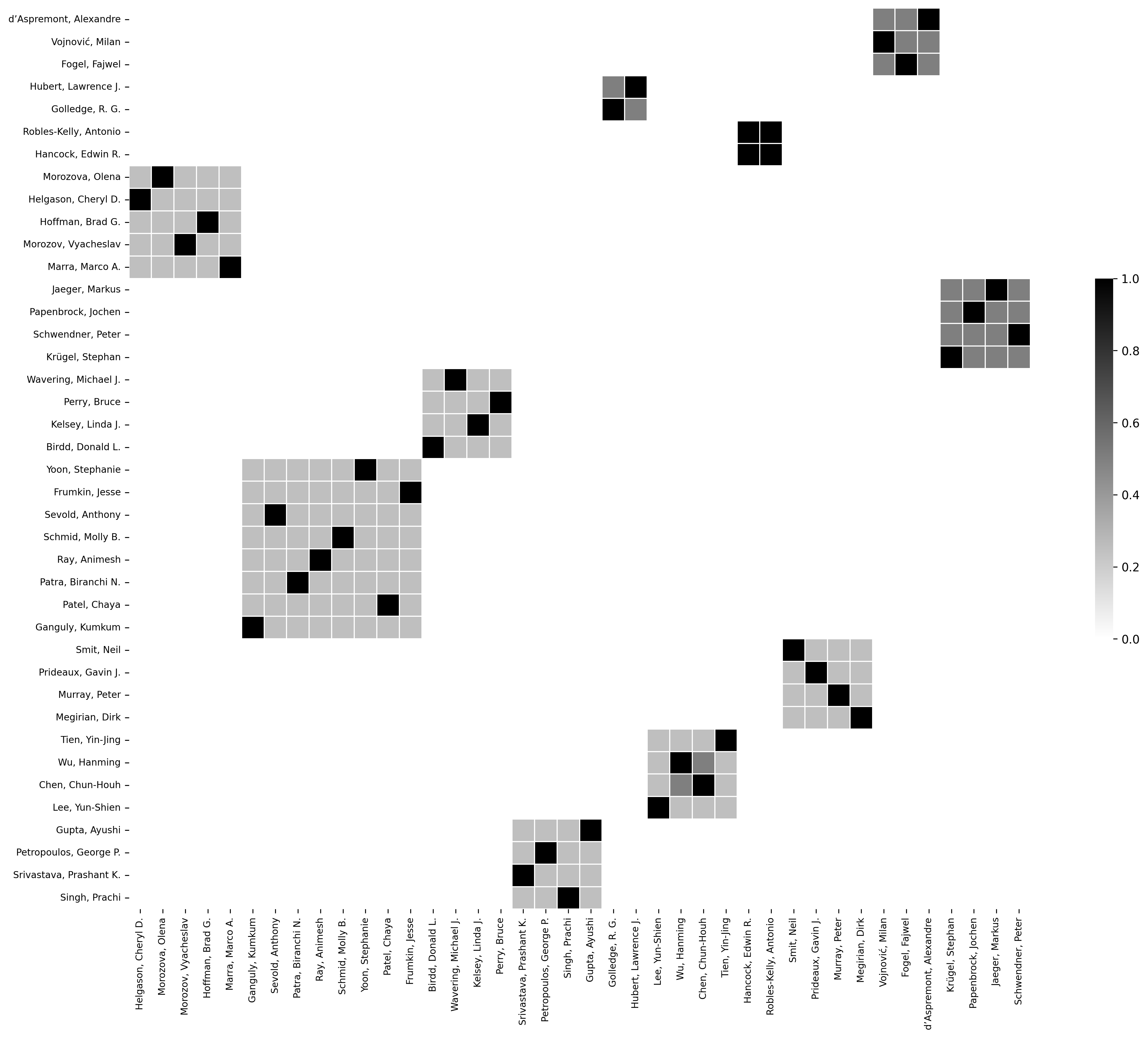}}
\caption{Representation of the optimal solutions for the $40$-seriation authorship instance with non coordinated rows/columns under von Neumann stress measure (left), Moore stress measure (center) and ME measure (right)\label{fig:seriation40_sep}}
\end{figure}

\begin{figure}[h]
\includegraphics[width=0.33\textwidth]{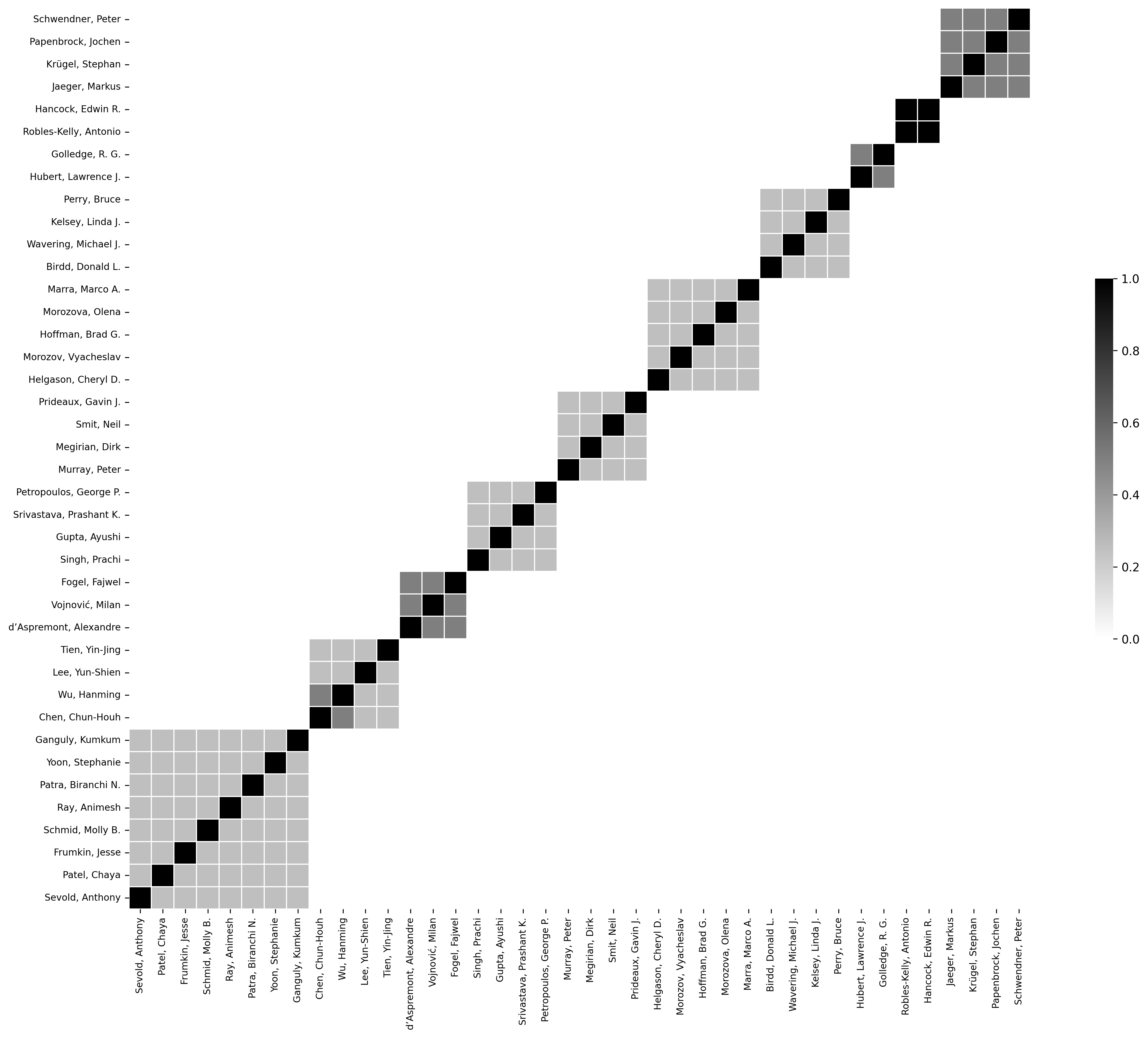}~\includegraphics[width=0.33\textwidth]{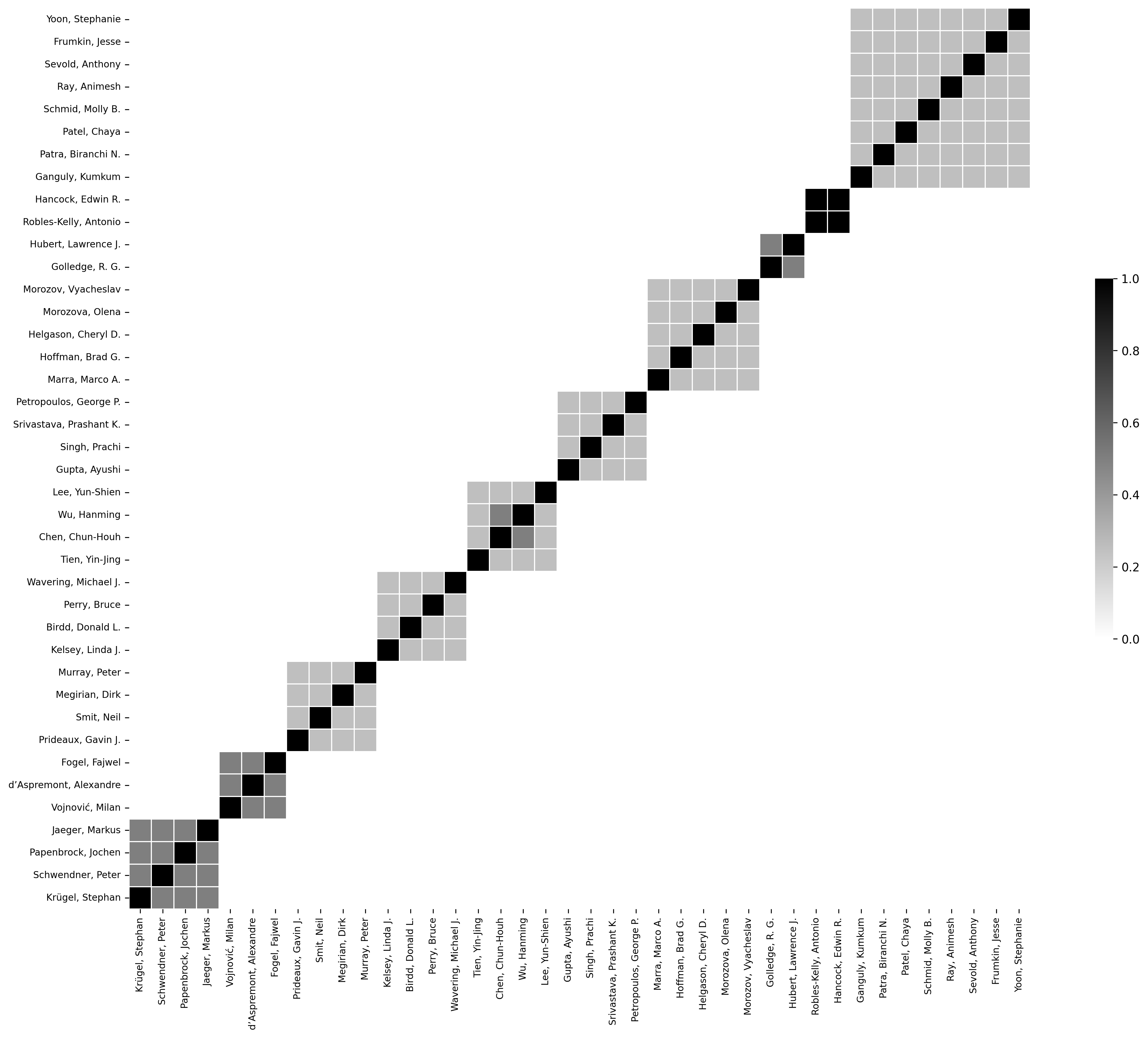}~{\includegraphics[width=0.33\textwidth]{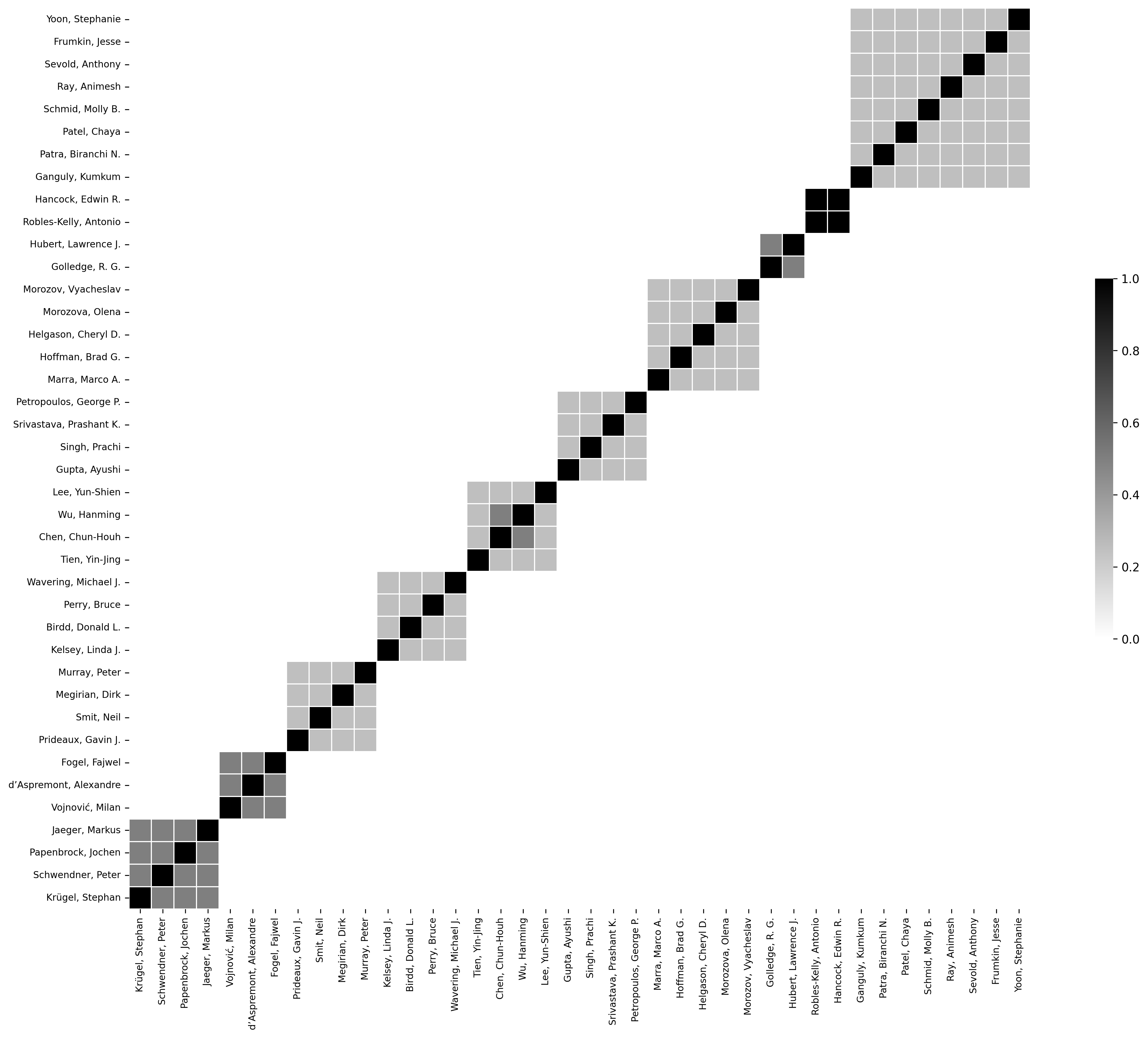}}
\caption{Representation of the optimal solutions for the $40$-seriation authorship instance with coordinated rows/columns under von Neumann stress measure (left), Moore stress measure (center) and ME measure (right).\label{fig:seriation40_joint}}
\end{figure}

Finally, for the 53-author dataset, Figures~\ref{fig:seriation53_joint} (coordinated) and \ref{fig:seriation53_sep} (uncoordinated) show consistent clustering results across all three criteria. Moore stress identifies 14 author clusters, while ME and von Neumann stress detect 15 clusters, two of which are weakly interconnected.

\begin{figure}[h]
\includegraphics[width=0.33\textwidth]{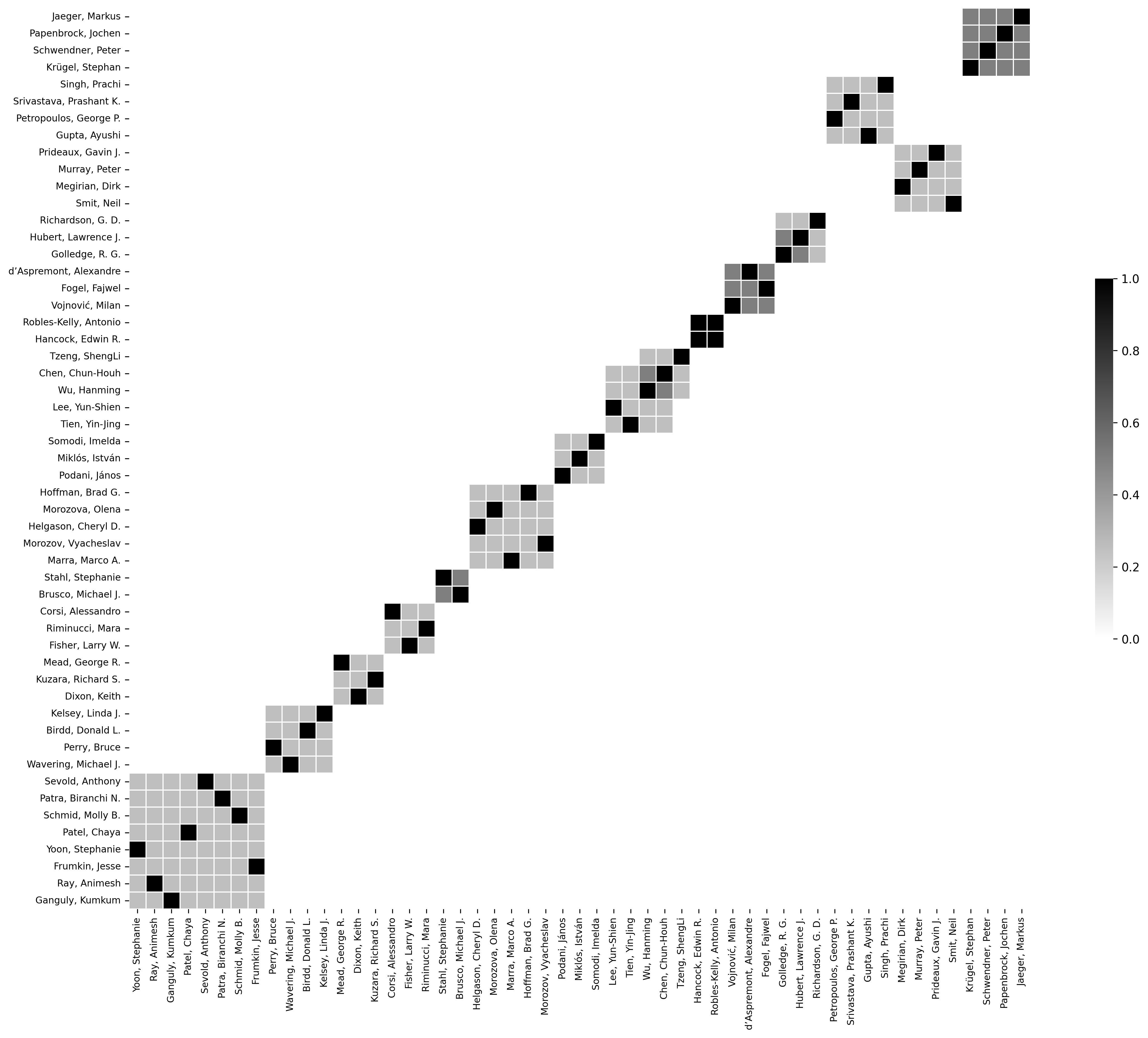}~\includegraphics[width=0.33\textwidth]{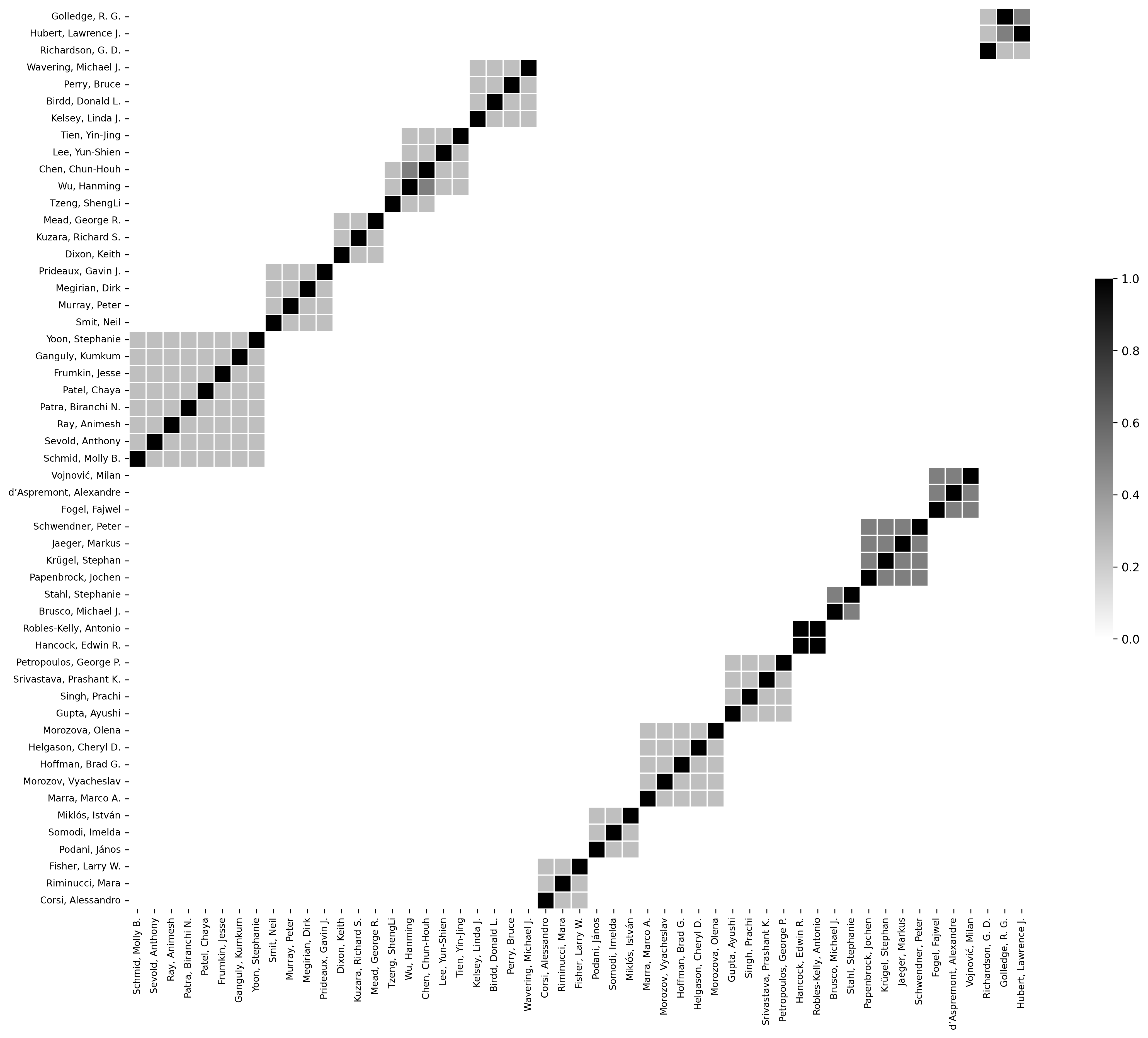}~{\includegraphics[width=0.33\textwidth]{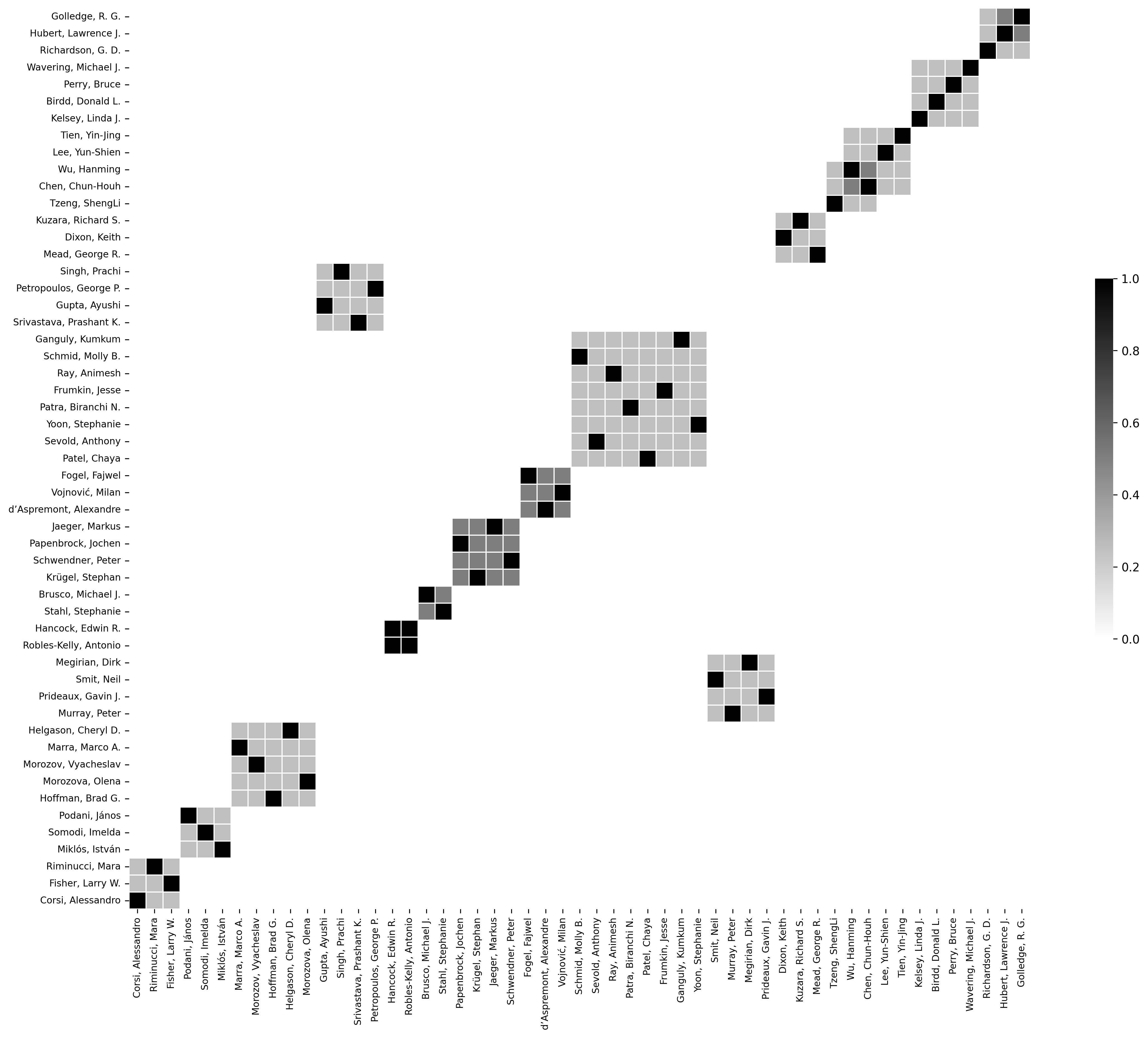}}
\caption{Representation of the optimal solutions for the $53$-seriation authorship instance with non coordinated rows/columns under von Neumann stress measure (left), Moore stress measure (center) and ME measure (right).\label{fig:seriation53_sep}}
\end{figure}

\begin{figure}[h]
\includegraphics[width=0.33\textwidth]{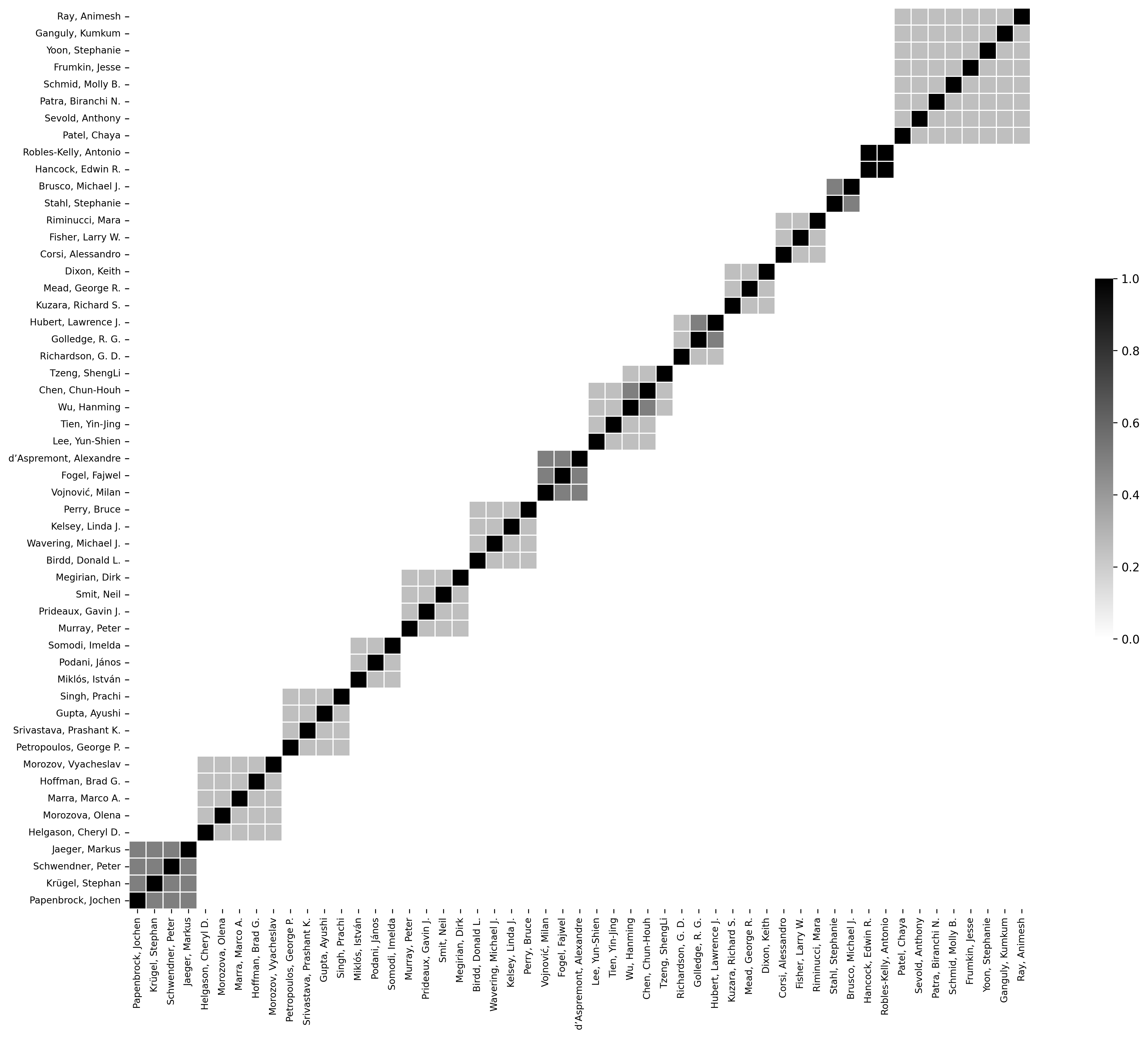}~\includegraphics[width=0.33\textwidth]{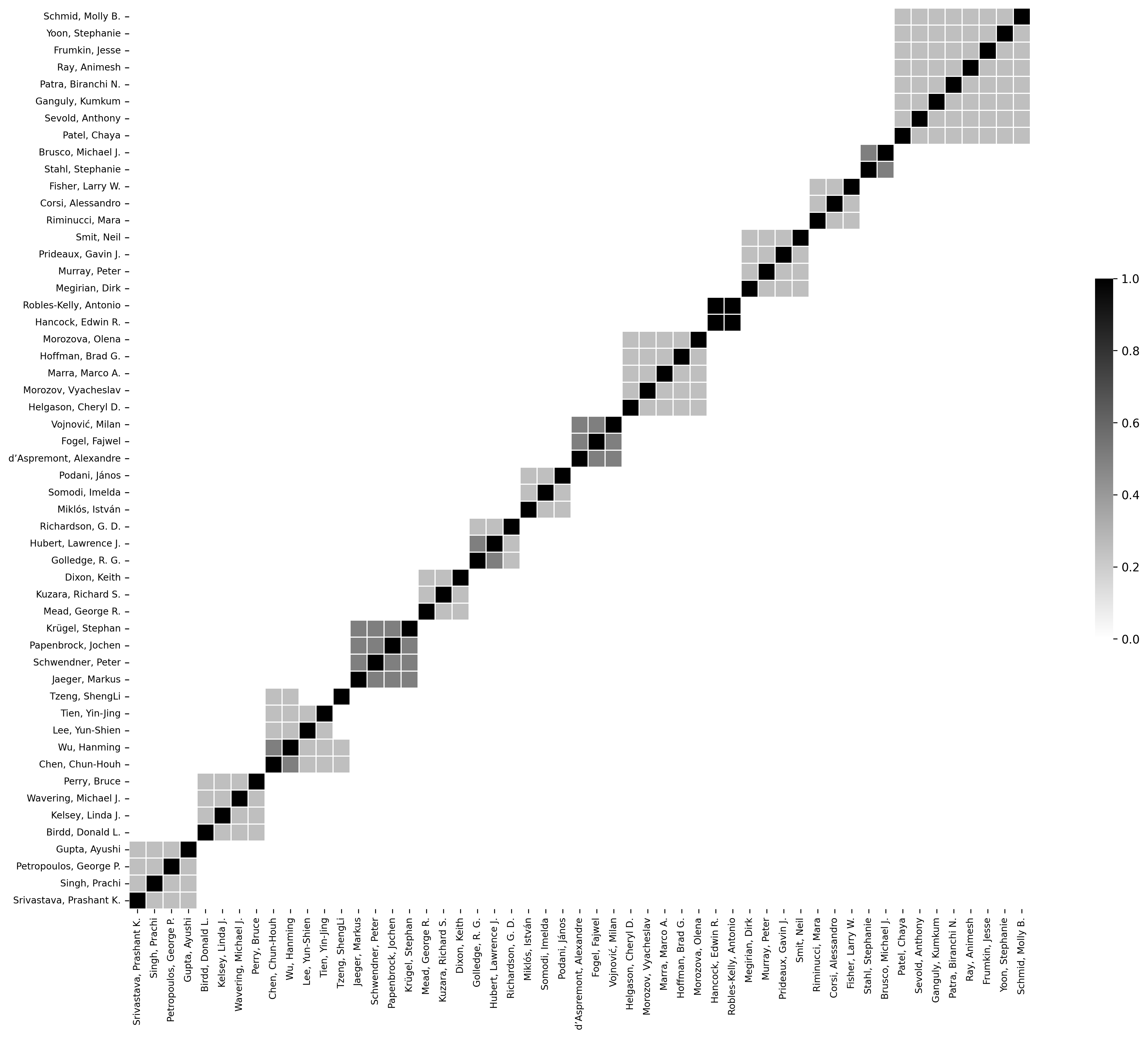}~{\includegraphics[width=0.33\textwidth]{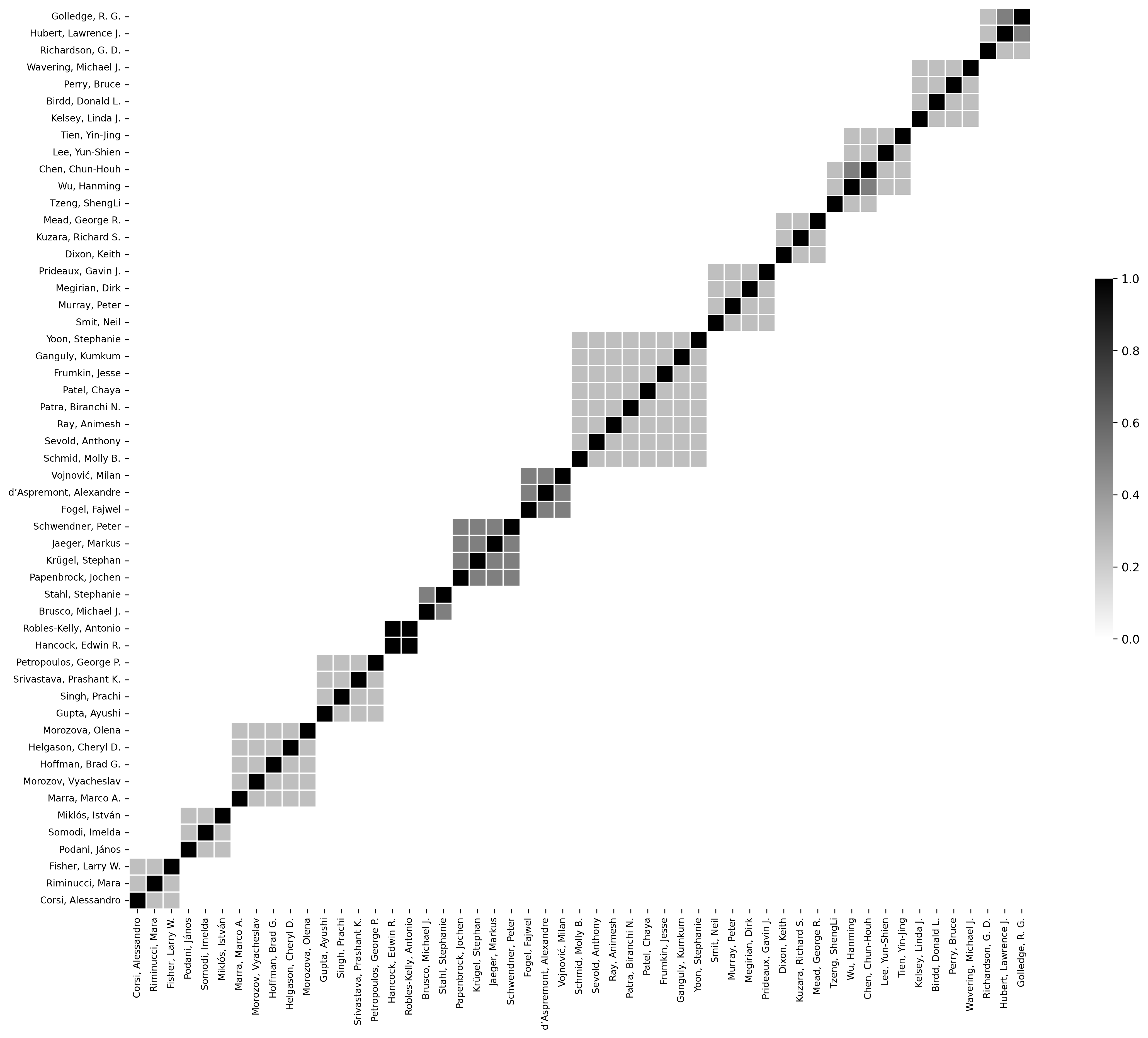}}
\caption{Representation of the optimal solutions for the $53$-seriation authorship instance with coordinated rows/columns under von Neumann stress measure (left), Moore stress measure (center) and ME measure (right).\label{fig:seriation53_joint}}
\end{figure}

Similar conclusions are obtained observing the transformed matrices for the $92$-authorship instance (Figures \ref{fig:seriation92_joint} and \ref{fig:seriation92_sep}).

In general, The von Neumann stress criterion seems to provide a fine-grained local organization with tight clusters and minimal overlap, highlighting immediate neighbor relationships. The Moore stress criterion maintains similar global structure but introduces slight misplacements and a few distant weak similarities, likely due to the broader neighborhood definition. In contrast, the Measure of Effectiveness criterion emphasizes global cohesion, producing an ordering that balances compact clustering with larger-scale structural regularity.

\begin{figure}[h]
\includegraphics[width=0.33\textwidth]{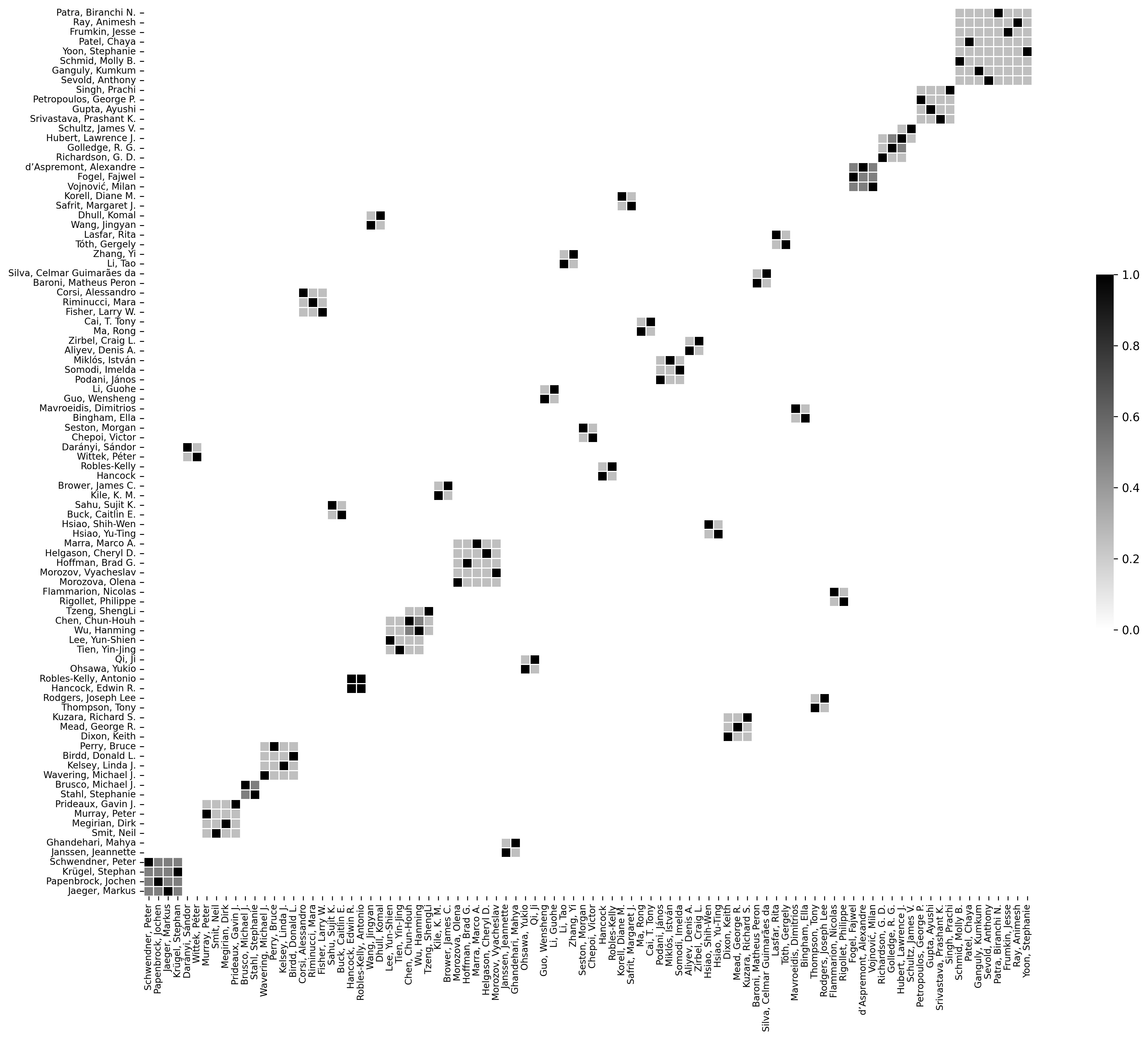}~\includegraphics[width=0.33\textwidth]{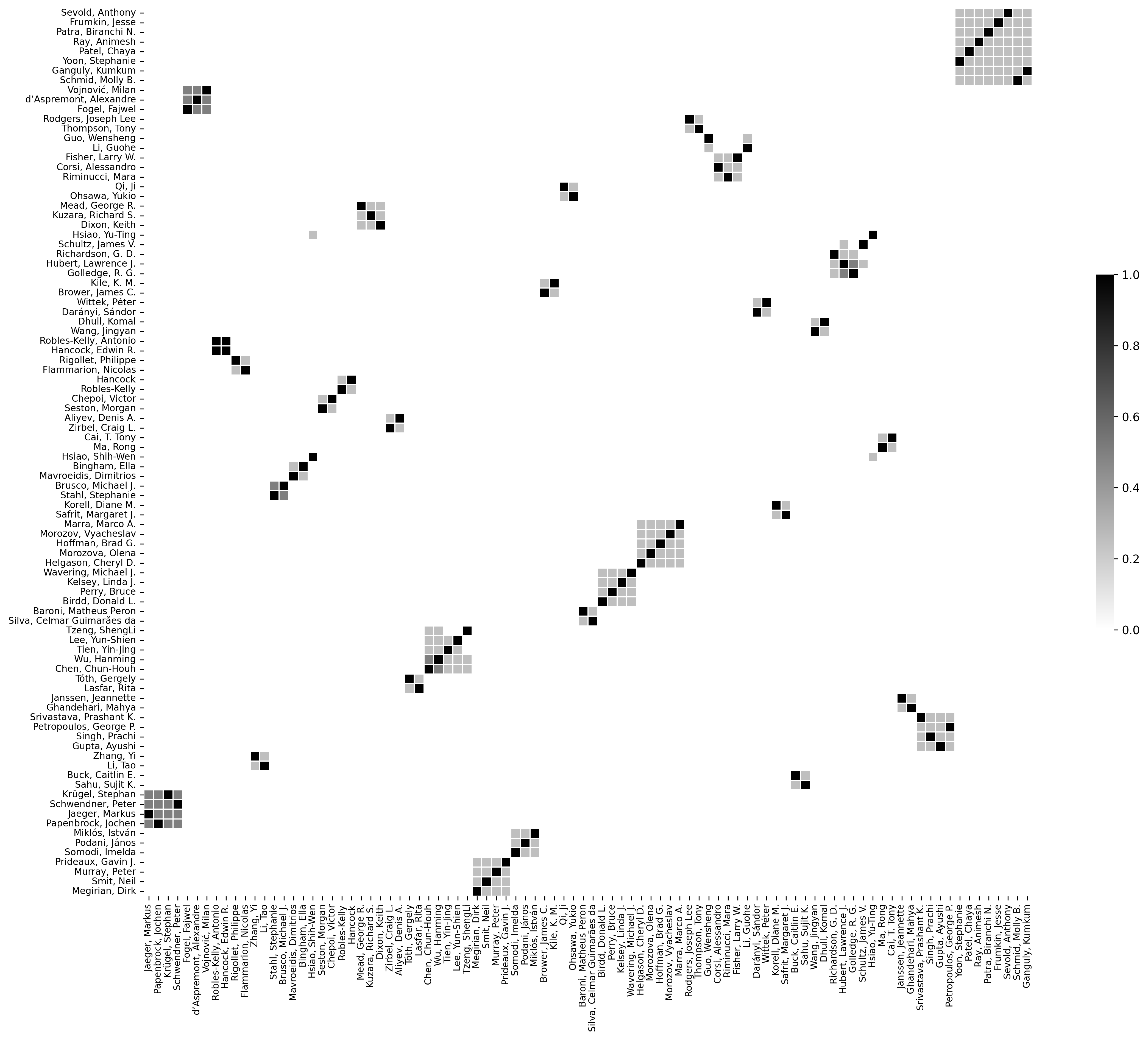}~{\includegraphics[width=0.33\textwidth]{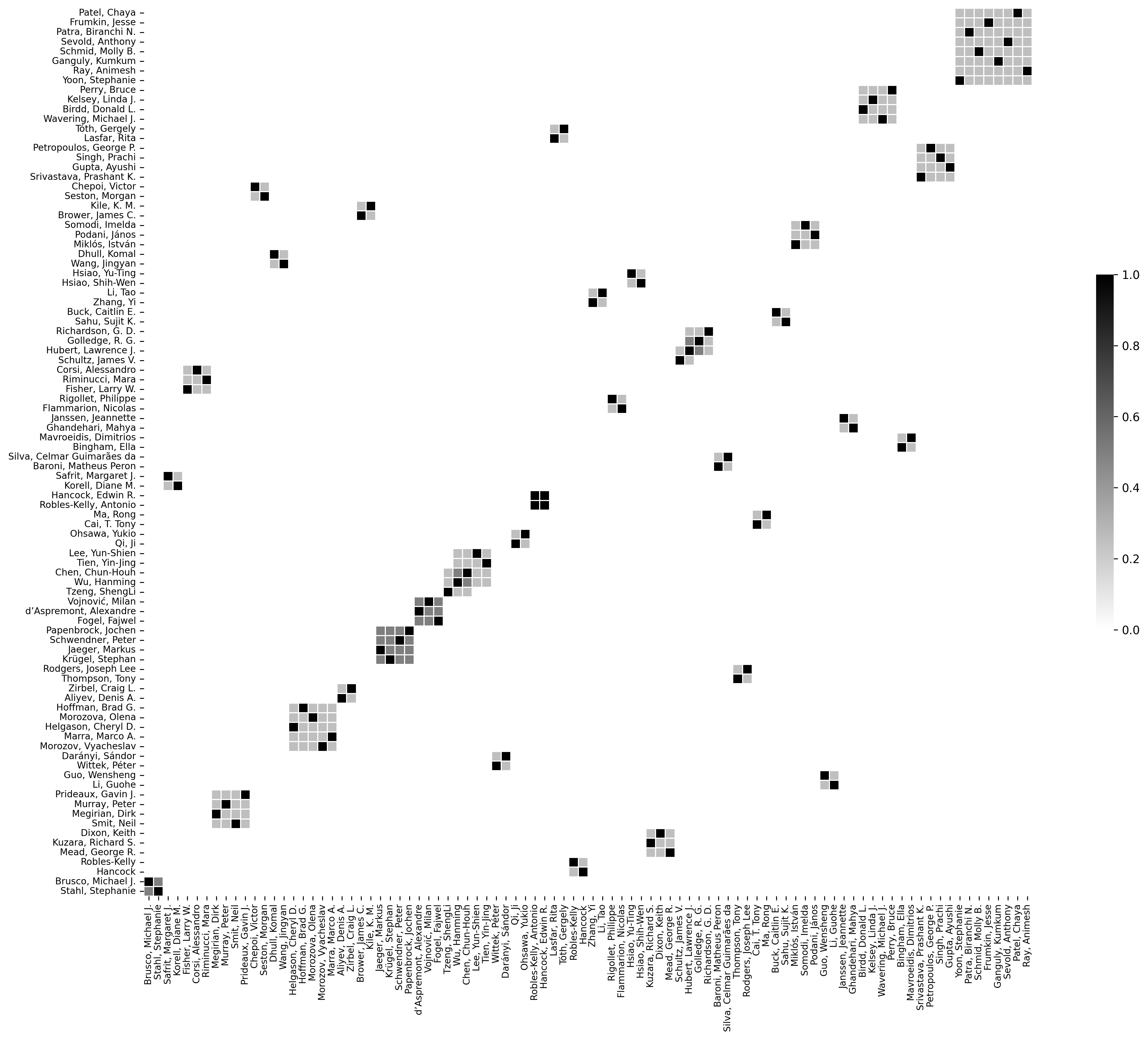}}
\caption{Representation of the optimal solutions for the $92$-seriation authorship instance with non coordinated rows/columns under von Neumann stress measure (left), Moore stress measure (center) and ME measure (right).\label{fig:seriation92_sep}}
\end{figure}

\begin{figure}[h]
\includegraphics[width=0.33\textwidth]{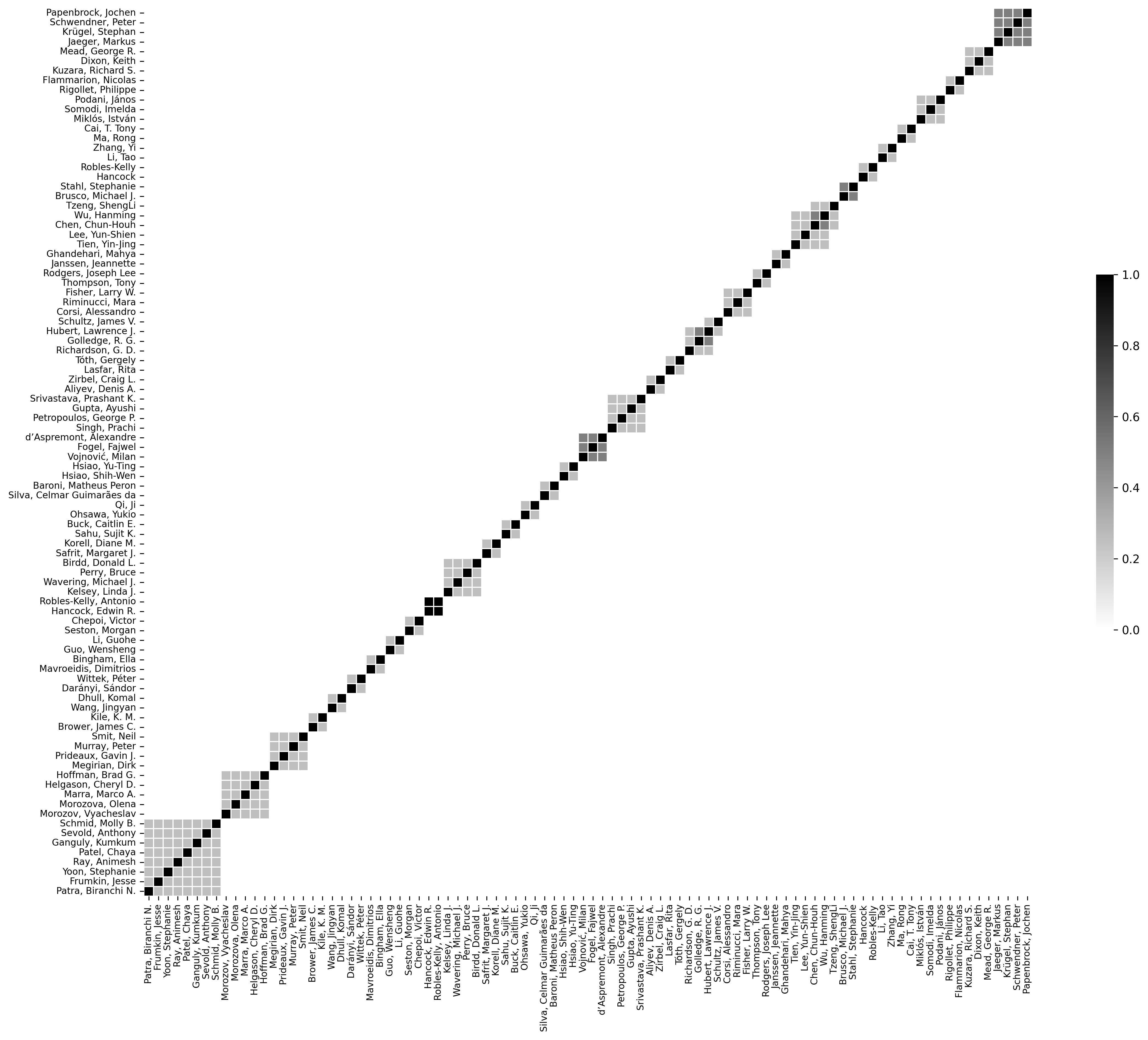}~\includegraphics[width=0.33\textwidth]{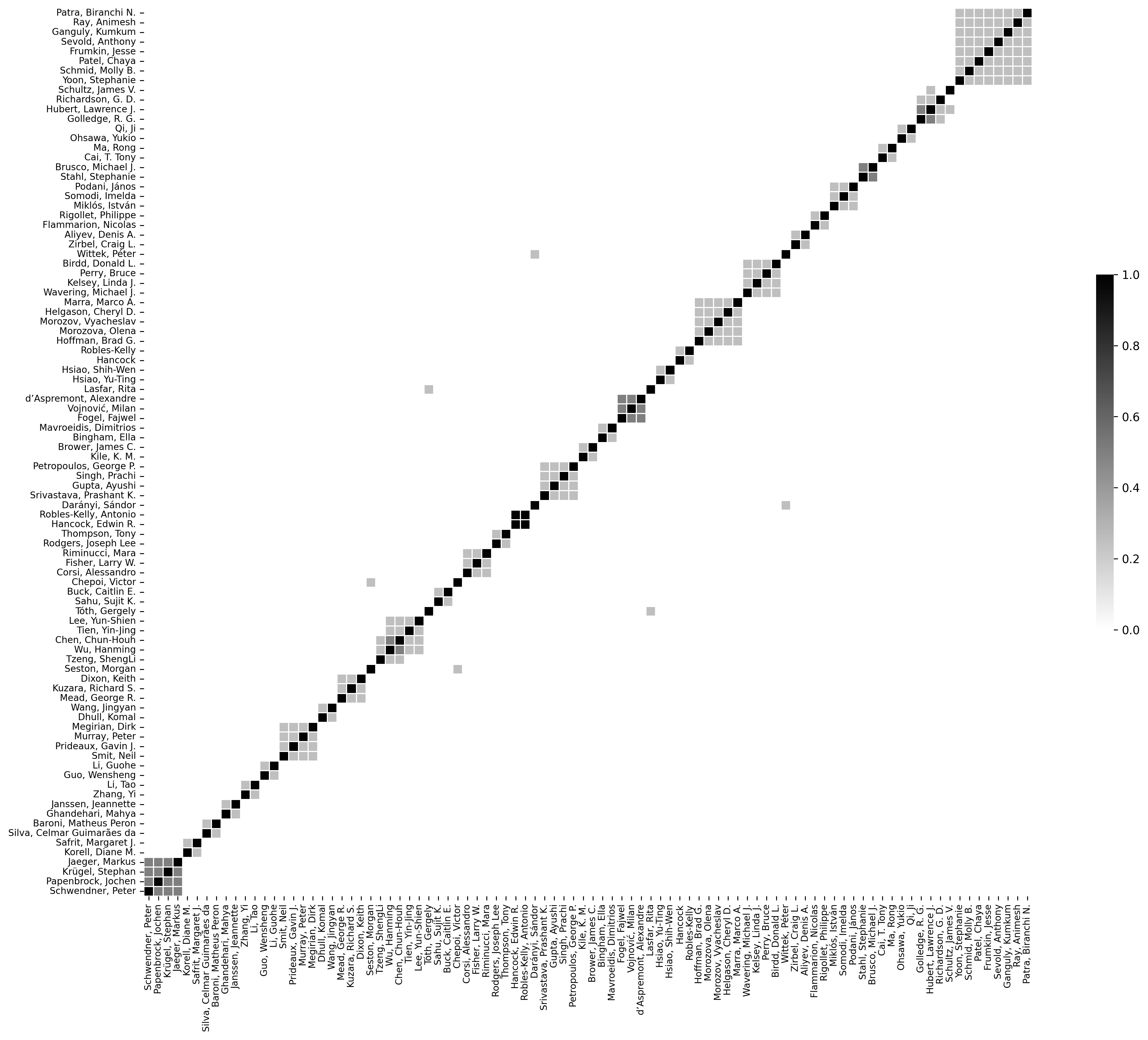}~{\includegraphics[width=0.33\textwidth]{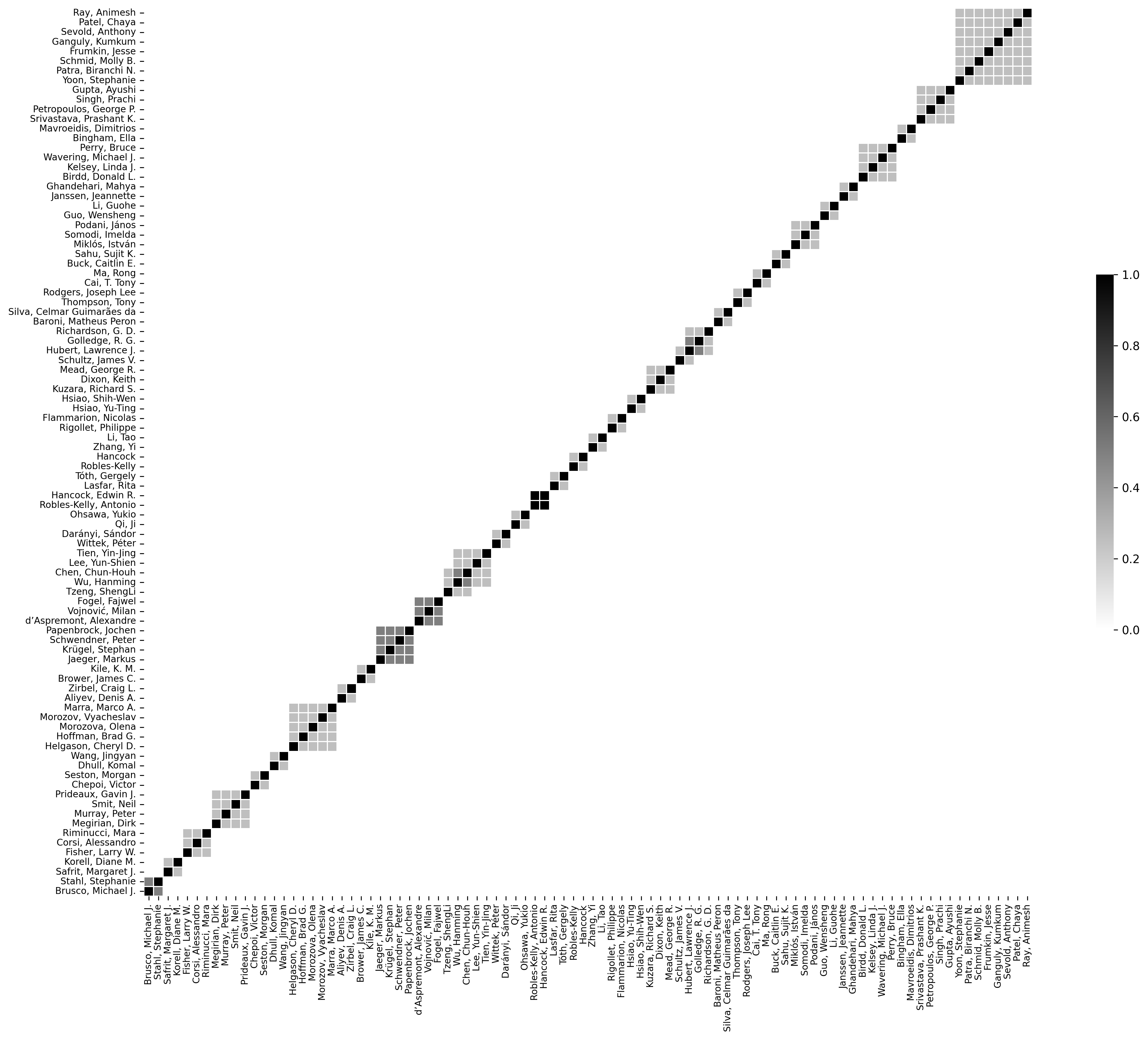}}
\caption{Representation of the optimal solutions for the $92$-seriation authorship instance with coordinated rows/columns under von Neumann stress measure (left), Moore stress measure (center) and ME measure (right).\label{fig:seriation92_joint}}
\end{figure}

In Table \ref{ultima} we summarize the results of the different measures obtained with our models. We highlight in bold face the best values for each of the datasets under each of the measures. These values are reached when the adequate measure is optimized (objective function), but in some cases some of the solutions coincide in value.

\begin{table}
\centering
{\small\begin{tabular}{l l | l l l}
& & \multicolumn{3}{c}{Measures}\\
dataset ($n$) &  Transformation & Von Neumann & Moore & ME \\
\hline
\textbf{14} & Original & 230 & 462 & 36 \\
 & N & \textbf{78} & 169 & \textbf{63} \\
 & Mo &\textbf{78} & \textbf{164} & \textbf{63} \\
 & ME & \textbf{78} & 167.5 & \textbf{63} \\
\textbf{40} & Original & 601 & 1045 & 6.25 \\
 & N & \textbf{299} & 577.5 & 53.87 \\
 & Mo & \textbf{299} & \textbf{545} & \textbf{54} \\
 & ME & 301.25 & 588.75 & \textbf{54} \\
\textbf{53} & Original & 761 & 1321 & 6.5 \\
 & N & \textbf{403} & 767.25 & \textbf{65.37} \\
 & Mo & 408.25 & \textbf{746.75} & 64.87 \\
 & ME & 411 & 803 & \textbf{65.37} \\
\textbf{92} & Original & 1151 & 1958 & 7.5 \\
 & N & \textbf{715} & 1339.75 & \textbf{85.37} \\
 & Mo & 727.75 & \textbf{1324} & 81.78 \\
 & ME & 719 & 1356.25 & \textbf{85.37}\\\hline
\end{tabular}}
\caption{Summary of measures obtained with the three methods for the co-authorship dataset.\label{ultima}} 
\end{table}

\section{Conclusions and Further Research}

In this paper, we have approached the matrix seriation problem from the perspective of mathematical optimization, focusing on two prominent criteria: neighborhood-based stress measures and the measure of effectiveness. We develop exact optimization models that compute globally optimal permutations of rows and/or columns; a key step in enhancing the interpretability and structure-revealing power of similarity and dissimilarity matrices, such as those visualized in heatmaps.

While much of the existing literature relies on heuristic methods, our work advances the field by introducing a rigorous optimization framework rooted in combinatorial and mixed-integer programming. We begin with a general binary programming model for seriation, which, despite its inherent complexity, captures the full expressiveness of the problem. For structured cases, we derive an equivalent minimum-cost Hamiltonian path formulation; this enables us to solve relevant instances more efficiently and precisely.

We validate our models through extensive computational experiments. These include synthetic datasets with controlled characteristics and real-world data from the R package \texttt{seriation}. Comparisons with heuristic alternatives show that our optimization-based solutions achieve clearer structural insights and more meaningful visualizations; this highlights that, in this setting, optimality translates directly into interpretability. Furthermore, we introduce a novel benchmark derived from a coauthorship network in the matrix seriation research community; this demonstrates how our methods can uncover meaningful clusters and relationships.

Looking ahead, several directions remain open for exploration. These include the integration of alternative objective functions and data-driven quality measures; the development of valid inequalities and cutting-plane techniques to strengthen our formulations; and the investigation of scalable relaxations or decomposition strategies for large-scale problems. These extensions will broaden the applicability of exact optimization tools for matrix seriation and solidify their role in solving complex problems in data science and visual analytics.

\section*{Acknowledgments}

Alfredo Mar\'in acknowledges that this article is part of the project TED2021-130875B-I00, 
funded by MCIN/AEI/10.13039/501100011033 and the European Union “NextGenerationEU”/PRTR
and has been also partially supported by research projects PID2022-137818OB-I00 (Ministerio
de Ciencia e Innovaci\'on, Spain), "Data Science and Resources Optimization in Comprehensive Waste Management (DataOpt-Waste)" 
TED2021-130875B-I00. V\'ictor Blanco and Justo Puerto acknowledge financial support by  grants PID2020-114594GB-C21 funded by MICIU/AEI /10.13039/ 501100011033, PCI2024-155024-2 - ``Optimization over Nonlinear Model Spaces: Where Discrete Meets Continuous Optimization'' funded by AEI and EU funds; FEDER+Junta de Andalucía projects C‐EXP‐139‐UGR23, and AT 21\_00032; VII PPIT-US (Ayudas Estancias Breves, Modalidad III.2A); and the IMAG-Maria de Maeztu grant CEX2020-001105-M /AEI /10.13039/501100011033 and IMUS-Maria de Maeztu grant CEX2024-001517-M - Apoyo a Unidades de Excelencia María de Maeztu. All the authors acknowledge financial support by grant RED2022-134149-T funded by MICIU/AEI/10.13039/501100011033 (Thematic Network on Location Science and Related Problems).

{\small
%\bibliographystyle{cas-model2-names}
%\bibliography{heatmaps}}

%\clearpage

\appendix

\section*{List of authors in our case study}

\centering\begin{adjustbox}{max width=0.9\textwidth}
\begin{tabular}{l l}
 {\bf Author Name} & {\bf \# Co-authorships} \\
Aliyev, Denis A. & 1 \\
Baroni, Matheus Peron & 1 \\
Bingham, Ella & 1 \\
Birdd, Donald L. & 3 \\
Brower, James C. & 1 \\
Brusco, Michael J. & 2 \\
Buck, Caitlin E. & 1 \\
Cai, T. Tony & 1 \\
Chen, Chun--Houh & 5 \\
Chepoi, Victor & 1 \\
Corsi, Alessandro & 2 \\
D'Aspremont, Alexandre & 4 \\
Darányi, Sándor & 1 \\
Dhull, Komal & 1 \\
Dixon, Keith & 2 \\
Fisher, Larry W. & 2 \\
Flammarion, Nicolas & 1 \\
Fogel, Fajwel & 4 \\
Frumkin, Jesse & 7 \\
Ganguly, Kumkum & 7 \\
Ghandehari, Mahya & 1 \\
Golledge, R. G. & 3 \\
Guo, Wensheng & 1 \\
Gupta, Ayushi & 3 \\
Hancock, Edwin R. & 5 \\
Helgason, Cheryl D. & 4 \\
Hoffman, Brad G. & 4 \\
Hsiao, Shih-Wen & 1 \\
Hsiao, Yu-Ting & 1 \\
Hubert, Lawrence J. & 4 \\
Jaeger, Markus & 6 \\
Janssen, Jeannette & 1 \\
Kelsey, Linda J. & 3 \\
Kile, K. M. & 1 \\
Korell, Diane M. & 1 \\
Kr\"ugel, Stephan & 6 \\
Kuzara, Richard S. & 2 \\
Lasfar, Rita & 1 \\
Lee, Yun--Shien & 3 \\
Li, Guohe & 1 \\
Li, Tao & 1 \\
Ma, Rong & 1 \\
Marra, Marco A. & 4 \\
Mavroeidis, Dimitrios & 1 \\
Mead, George R. & 2 \\
&\\
\end{tabular}~\begin{tabular}{l l}
 {\bf Author Name} & {\bf \# Co-authorships} \\
 Megirian, Dirk & 3 \\
 Miklós, István & 2 \\
Morozov, Vyacheslav & 4 \\
Morozova, Olena & 4 \\
Murray, Peter & 3 \\
Ohsawa, Yukio & 1 \\
Papenbrock, Jochen & 6 \\
Patel, Chaya & 7 \\
Patra, Biranchi N. & 7 \\
Perry, Bruce & 3 \\
Petropoulos, George P. & 3 \\
Podani, János & 2 \\
Prideaux, Gavin J. & 3 \\
Qi, Ji & 1 \\
Ray, Animesh & 7 \\
Richardson, G. D. & 2 \\
Rigollet, Philippe & 1 \\
Riminucci, Mara & 2 \\
Robles--Kelly, Antonio & 5 \\
Rodgers, Joseph Lee & 1 \\
Safrit, Margaret J. & 1 \\
Sahu, Sujit K. & 1 \\
Schmid, Molly B. & 7 \\
Schultz, James V. & 1 \\
Schwendner, Peter & 6 \\
Seston, Morgan & 1 \\
Sevold, Anthony & 7 \\
Silva, Celmar Guimarães da & 1 \\
Singh, Prachi & 3 \\
Smit, Neil & 3 \\
Somodi, Imelda & 2 \\
Srivastava, Prashant K. & 3 \\
Stahl, Stephanie & 2 \\
Thompson, Tony & 1 \\
Tien, Yin-Jing & 3 \\
Tzeng, ShengLi & 2 \\
Tóth, Gergely & 1 \\
Vojnović, Milan & 4 \\
Wang, Jingyan & 1 \\
Wavering, Michael J. & 3 \\
Wittek, Péter & 1 \\
Wu, Hanming & 5 \\
Yoon, Stephanie & 7 \\
Zhang, Yi & 1 \\
Zirbel, Craig L. & 1 \\
&\\
\end{tabular}\end{adjustbox}

\section*{Clusters obtained with our models}

\begin{table}[ht]
\centering
\renewcommand{\arraystretch}{1.2}
\begin{tabular}{|c|p{11cm}|}
\hline
\textbf{Cluster} & \textbf{Authors} \\
\hline
Cluster A & Ganguly, Kumkum — Frumkin, Jesse — Patel, Chaya — Patra, Biranchi N. — Ray, Animesh — Schmid, Molly B. — Sevold, Anthony — Yoon, Stephanie \\
\hline
Cluster B & Jaeger, Markus — Kr\"ugel, Stephan — Papenbrock, Jochen — Schwendener, Peter \\
\hline
Cluster C & Chen, Chun-Houh — Wu, Hanming \\
\hline
\end{tabular}
\caption{Clusters detected with optimal seriation in the 14-authors dataset.}
\label{table:clusters14}
\end{table}
\begin{table}[ht]
\centering
\renewcommand{\arraystretch}{1.2}
\begin{tabular}{|c|p{12cm}|}
\hline
\textbf{Cluster} & \textbf{Authors} \\
\hline
A & Sevold, Anthony — Frumkin, Jesse — Patel, Chaya — Patra, Biranchi N. — Ray, Animesh — Schmid, Molly B. — Ganguly, Kumkum — Yoon, Stephanie \\
\hline
B & Chen, Chun-Houh — Wu, Hanming \\
\hline
C & d'Aspremont, Alexandre — Vojnovic, Milan — Fogel, Fajwel — Hubert, Lawrence J. — Golledge, R. G. \\
\hline
D & Petropoulos, George P. — Srivastava, Prashant K. — Gupta, Ayush — Singh, Prachi \\
\hline
E & Morozov, Vyacheslav — Hoffman, Brad G. — Marra, Marco A. — Morozova, Olena — Helgason, Cheryl D. \\
\hline
F & Schwendener, Peter — Papenbrock, Jochen — Kr\"ugel, Stephan — Jaeger, Markus \\
\hline
\multicolumn{2}{c}{Weakly Linked Clusters}\\\hline
\rowcolor{gray!15}
G & Hancock, Edwin R. — Robles-Kelly, Antonio \\
\hline
\rowcolor{gray!15}
H  & Perry, Bruce — Kelsey, Linda J. — Waverling, Michael J. \\
\hline
\rowcolor{gray!15}
I & Smit, Neil — Prideaux, Gavin J. — Murray, Peter \\
\hline
\rowcolor{gray!15}
J & Megriana, Dirk — Tien, Yin-jing — Lee, Yun-Shien \\
\hline
\end{tabular}
\caption{Clusters detected with optimal seriation in the 40-authors dataset.}
\label{table:clusters40}
\end{table}

\begin{table}[ht]
\centering
\renewcommand{\arraystretch}{1.2}
\begin{tabular}{|c|p{12.5cm}|}
\hline
\textbf{Cluster ID} & \textbf{Authors} \\
\hline
A & Frumkin, Jesse — Patel, Chaya — Patra, Biranchi N. — Ray, Animesh — Schmid, Molly B. — Ganguly, Kumkum — Yoon, Stephanie — Sevold, Anthony \\
\hline
B & Jaeger, Markus — Krügel, Stephan — Papenbrock, Jochen — Schwendener, Peter \\
\hline
C & Morozova, Olena — Hoffman, Brad G. — Helgason, Cheryl D. — Marra, Marco A. — Morozov, Vyacheslav \\
\hline
D & Podani, Janos — Miklós, István — Somodi, Imelda \\
\hline
E & Petropoulos, George P. — Srivastava, Prashant K. — Gupta, Ayush — Singh, Prachi \\
\hline
F & d’Aspremont, Alexandre — Fogel, Fajwel — Vojnović, Milan — Hubert, Lawrence J. — Golledge, R. G. \\
\hline
G & Wu, Hanming — Chen, Chun-Houh — Lee, Yun-Shien — Tien, Yin-jing — Tezeng, Sheng-Li \\
\hline
H & Hancock, Edwin R. — Robles-Kelly, Antonio — Stahl, Stephanie — Brusco, Michael J. \\
\hline
I & Riminucci, Mara — Fisher, Larry W. — Corsi, Alessandro \\
\hline
\multicolumn{2}{c}{Weakly linked} \\
\hline
\rowcolor{gray!15}J & Smit, Neil — Prideaux, Gavin J. — Murray, Peter — Megríana, Dirk \\
\hline
\rowcolor{gray!15}K & Perry, Bruce — Kelsey, Linda J. — Waverling, Michael J. \\
\hline
\rowcolor{gray!15}L & Bird, Donald L. — Richardson, G. D. — Dixon, Keith — Kuzara, Richard S. — Mead, George R. \\
\hline
\multicolumn{2}{c}{Very Weakly linked} \\
\hline
\rowcolor{gray!20}M & C + D (merged in \texttt{N}): Morozova, Olena — Hoffman, Brad G. — Helgason, Cheryl D. — Marra, Marco A. — Morozov, Vyacheslav — Podani, Janos — Miklós, István — Somodi, Imelda \\
\hline
\rowcolor{gray!20}N & Tezeng, Sheng-Li — Kuzara, Richard S. — Mead, George R. \\
\hline
\end{tabular}
\caption{Clusters detected with optimal seriation in the 53-author dataset.}
\label{table:clusters53_updated}
\end{table}

\begin{table}[ht]
\centering
\renewcommand{\arraystretch}{1.2}
\begin{tabular}{|c|p{12.5cm}|}
\hline
\textbf{Cluster ID} & \textbf{Authors} \\
\hline
A & Frumkin, Jesse — Patel, Chaya — Patra, Biranchi N. — Ray, Animesh — Schmid, Molly B. — Ganguly, Kumkum — Yoon, Stephanie — Sevold, Anthony \\
\hline
B & Jaeger, Markus — Krügel, Stephan — Papenbrock, Jochen — Schwendener, Peter \\
\hline
C & Morozov, Vyacheslav — Marra, Marco A. — Helgason, Cheryl D. — Hoffman, Brad G. — Morozova, Olena \\
\hline
D & Podani, Janos — Miklós, István — Somodi, Imelda \\
\hline
E & Petropoulos, George P. — Srivastava, Prashant K. — Gupta, Ayush — Singh, Prachi \\
\hline
\multicolumn{2}{c}{\textit{Moderately linked}} \\
\hline
F & Hancock, Edwin R. — Robles-Kelly, Antonio — Stahl, Stephanie — Brusco, Michael J. \\
\hline
G & Riminucci, Mara — Fisher, Larry W. — Corsi, Alessandro \\
\hline
\multicolumn{2}{c}{\textit{Weakly linked}} \\
\hline
\rowcolor{gray!15}H & Perry, Bruce — Kelsey, Linda J. — Waverling, Michael J. \\
\hline
\rowcolor{gray!15}I & Smit, Neil — Prideaux, Gavin J. — Murray, Peter — Megríana, Dirk \\
\hline
\rowcolor{gray!15}J & Dixon, Keith — Kuzara, Richard S. — Mead, George R. — Richardson, G. D. \\
\hline
\multicolumn{2}{c}{\textit{Very weakly linked}} \\
\hline
\rowcolor{gray!20}K & Lee, Yun-Shien — Chen, Chun-Houh — Tezeng, Sheng-Li — Schultz, James V. — Hsiao, Hsu-Ying — Hsiao, Han-Chi — Zhang, Yi — Silva, C.G. — Darwiche, Sandra — etc. \\
\hline
\end{tabular}
\caption{Clusters detected with optimal seriation in the 92-author dataset.}
\label{table:clusters92_updated}
\end{table}

\end{document}